\definecolor{darkblue}{rgb}{0,0,0.6}
\newtheorem{theorem}{Theorem}[section] 
\newtheorem{prop}[theorem]{Proposition}
\newtheorem{lem}[theorem]{Lemma}
\newtheorem{kor}[theorem]{Corollary}
\newtheorem{con-alt}[theorem]{Conjecture}
\newtheorem{constr-alt}[theorem]{Construction}
\theoremstyle{definition}
\newtheorem{ddd-alt}[theorem]{Definition}
\newtheorem{ass-alt}[theorem]{Assumption}
\newtheorem{prob-alt}[theorem]{Problem}
\newenvironment{ddd}    
{%
	\pushQED{\qed}\begin{ddd-alt}}
	{\popQED\end{ddd-alt}}
\newenvironment{ass}    
{%
	\pushQED{\qed}\begin{ass-alt}}
	{\popQED\end{ass-alt}}
\popQED\end{con-alt}}
\popQED\end{constr-alt}}
\popQED\end{prob-alt}}
\theoremstyle{remark}
\theoremstyle{definition}
\newtheorem{ex-alt}[theorem]{Example}
\newtheorem{rem-alt}[theorem]{Remark}
\newenvironment{ex}    
{%
	\pushQED{\qed}\begin{ex-alt}}
	{\popQED\end{ex-alt}}
\newenvironment{rem}    
{%
	\pushQED{\qed}\begin{rem-alt}}
	{\popQED\end{rem-alt}}
\crefname{theorem}{Theorem}{Theorems}
\crefname{lem}{Lemma}{Lemmas}
\crefname{prop}{Proposition}{Propositions}
\crefname{section}{Section}{Sections}
\crefname{ex-alt}{Example}{Examples}
\crefname{ddd-alt}{Definition}{Definitions}
\crefname{kor}{Corollary}{Corollaries}
\numberwithin{equation}{section}
\newcommand{\ab}{\mathrm{ab}}
\newcommand{\bA}{\mathbf{A}}
\newcommand{\bC}{\mathbf{C}}
\newcommand{\C}{\mathbb{C}}
\newcommand{\K}{\mathbb{K}}
\newcommand{\nat}{\mathbb{N}}
\newcommand{\R}{\mathbb{R}}
\newcommand{\Z}{\mathbb{Z}}
\newcommand{\cA}{\mathcal{A}}
\newcommand{\cB}{\mathcal{B}}
\newcommand{\cC}{\mathcal{C}}
\newcommand{\cO}{\mathcal{O}}
\newcommand{\cV}{\mathcal{V}}
\newcommand{\cX}{\mathcal{X}}
\DeclareMathOperator{\Cofib}{Cofib}
\DeclareMathOperator{\dom}{dom}
\DeclareMathOperator{\Hom}{Hom}
\DeclareMathOperator{\Homol}{Hg}
\DeclareMathOperator{\im}{im}
\DeclareMathOperator{\Ind}{Ind}
\DeclareMathOperator{\Li}{\Li}
\DeclareMathOperator{\Res}{Res}
\DeclareMathOperator{\supp}{supp}
\DeclareMathOperator{\tr}{tr}
\renewcommand{\emptyset}{\varnothing}
\newcommand{\BC}{\mathbf{BornCoarse}}
\newcommand{\Coarse}{\mathbf{Coarse}}
\newcommand{\Sp}{\mathbf{Sp}}
\newcommand{\UBC}{\mathbf{UBC}}
\newcommand{\loc}{{\mathrm{loc}}}
\newcommand{\ch}{{\mathbf{ch}}}
\newcommand{\Dirac}{\slashed{D}}
\newcommand{\ind}{{\mathtt{index}}}
\newcommand{\Orb}{\mathbf{Orb}}
\newcommand{\Hilb}{\mathbf{Hilb}}
\newcommand{\inter}{\mathrm{int}}
\renewcommand{\hat}{\widehat}
\renewcommand{\tilde}{\widetilde}
\begin{document}
  \title[Breaking symmetries]{Breaking symmetries for equivariant coarse homology theories}

\author[U.~Bunke]{Ulrich Bunke}
\address{Fakult{\"a}t f{\"u}r Mathematik,
	Universit{\"a}t Regensburg,
	93040 Regensburg,
	Germany}
\email{ulrich.bunke@mathematik.uni-regensburg.de}

\author[M.~Ludewig]{Matthias Ludewig}
\address{Fakult{\"a}t f{\"u}r Mathematik,
	Universit{\"a}t Regensburg,
	93040 Regensburg,
	Germany}
\email{matthias.ludewig@mathematik.uni-regensburg.de}

 \date{\today}

\begin{abstract}
We describe a symmetry breaking construction in coarse geometry   which allows to obtain information about equivariant coarse homology classes 
by restriction to smaller groups and spaces.
  In the case of equivariant coarse $K$-homology theory we give an analytic interpretation of this construction. As a consequence we obtain applications to the spectral theory of invariant differential operators. 
 \end{abstract}
\maketitle
\tableofcontents
\section{Introduction}

  We consider a  differential operator  $D$ of Dirac or Laplace type  acting on sections of  a Hermitian vector bundle on  a complete Riemannian manifold $M$. We assume   the operator  is invariant under the action of a  
 {discrete}
  group of symmetries $G$.   
According to \cref{weroigjoergergegergwegergg} a   spectral interval  for $D$ is an  interval   in $\R$ whose endpoints are located in spectral gaps of $D$.
The spectral projection  $E_{D}(I)$   associated to  a  bounded spectral interval {$I$}   {is an element of the} $G$-invariant Roe algebra $C^{G}(M)$  associated to the underlying coarse space of $M$, see \cref{qoijfoqefeqewfwefeqwfe}.  

We now consider a  {subset} $Z$ with smooth boundary $\partial^{\infty} Z$ which are both preserved by the action of a subgroup $K$ of $G$.
If we define    a selfadjoint extension of the restriction $D_{Z}$ to $Z$ using $K$-invariant elliptic local boundary conditions at $\partial^{\infty} Z$,  then for a bounded spectral interval {$I$}, 
 $E_{D_{Z}}(I)$  {is an element of} the  $K$-invariant Roe algebra $C^{K}(Z)$ associated to  
the underlying coarse space of $Z$, see \cref{qoijfoqefeqewfwefeqwfe}.

The   operator $D_{Z}$ is   ``locally $G$-invariant'' as a differential operator, but as a functional analytic object its invariance is distorted  by the boundary condition near $\partial^{\infty} Z$.
{However,} it turns out that the influence of this distortion on the spectral projections decays with the distance from $\partial^{\infty} Z$.  
In particular, they  become  {``}more and more $G$-invariant{''} if we move far {away} from the boundary. 
In order to capture this behavior in a precise manner,  under a wideness assumption (see \cref{9wtgiohwgregwerg}) on $Z$ in \cref{qwefihfuiewqfeefewfewfeqwfqfeceq} we  introduce an extension of Roe algebras
\begin{equation}\label{ewqfojqwfqewfqewfe}
0\to    C^{K}(\partial Z)\to  C^{G,K}(M,Z)\stackrel{\sigma}{\to} C^{G}(M)\to 0\ ,
\end{equation}
where $C^{K}(\partial Z)$ is the subalgebra of $C^{K}(Z)$ generated by $K$-invariant operators supported near $\partial^{\infty} Z$, and
$ C^{G,K}(M,Z)$ is a {certain} subalgebra of $C^{K}(Z)$ {consisting of asymptotically $G$-invariant operators}.
 {In the special case that $K$ is trivial, this extension has previously been considered in \cite{LudewigThiangCobordism}.}
The precise statement is now that  $E_{D_{Z}}(I)$ (apriori  belonging to $C^{K}(Z)$)  belongs to the subalgebra $  C^{G,K}(M,Z)$, and that 
\begin{equation}\label{qewr09i0qwerqewrqr}
\sigma (E_{D_{Z}}(I))=E_{D}(I)\ , 
\end{equation}
see \cref{eirogjwoegwergerwf}.

Applying $K$-theory to the exact sequence \eqref{ewqfojqwfqewfqewfe} of $C^{*}$-algebras we get a long exact sequence
\begin{equation}\label{gergergregregerggwr}
\cdots \to K_{*}( C^{G,K}(M,Z))\stackrel{K(\sigma)}{\to} K_{*} ( C^{G}(M))\stackrel{\delta}{\to}
K_{*-1}(C^{K}(\partial Z))\to \cdots
\end{equation} 
of $K$-theory groups.
 A projection $P$  in a $C^{*}$-algebra represents a $K$-theory class  $[P]$ in the degree zero $K$-theory group. {As first observed in \cite[Thm.\ 3.4]{LudewigThiangCobordism} (see also \cite[\S 2.2.3]{ThiangEdgeFollowing})}, the exactness of 
 \eqref{gergergregregerggwr} has the following simple consequence (see \cref{09qerug09egwgwerge}).

 \begin{kor}\label{weoigjoegergwegwregrewgw}
 If $I$ is a spectral interval of $D$ and $\delta[E_{D}(I)]\not=0$, then $I$ is not a spectral interval of $D_{Z}$.
 \end{kor}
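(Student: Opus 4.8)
The plan is to argue by contraposition within the standing hypothesis that $I$ is a spectral interval of $D$: assuming in addition that $I$ is a spectral interval of $D_{Z}$, I will show $\delta[E_{D}(I)]=0$, which contradicts the hypothesis $\delta[E_{D}(I)]\neq 0$.

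First I would feed the additional assumption into \cref{eirogjwoegwergerwf}. Since $I$ is by hypothesis a bounded spectral interval of $D_{Z}$ (so that its endpoints lie in spectral gaps of $D_{Z}$ and $E_{D_{Z}}(I)$ is a genuine projection in the Roe algebra $C^{K}(Z)$), that result asserts two things: the spectral projection $E_{D_{Z}}(I)$ in fact lies in the subalgebra $C^{G,K}(X,Z)$ of \eqref{ewqfojqwfqewfqewfe}, and $\sigma(E_{D_{Z}}(I))=E_{D}(I)$ in $C^{G}(M)$. Thus $E_{D_{Z}}(I)$ is a projection in $C^{G,K}(X,Z)$ and represents a class $[E_{D_{Z}}(I)]\in K_{0}(C^{G,K}(X,Z))$.

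Next I would apply $K$-theory. Since the functor $K_{*}$ sends the class of a projection to the class of its image under a $C^{*}$-homomorphism, the identity $\sigma(E_{D_{Z}}(I))=E_{D}(I)$ gives
\[
K(\sigma)[E_{D_{Z}}(I)]=[\sigma(E_{D_{Z}}(I))]=[E_{D}(I)]
\]
in $K_{0}(C^{G}(M))$. Hence $[E_{D}(I)]$ lies in the image of $K(\sigma)$. By exactness of the long exact sequence \eqref{gergergregregerggwr} at $K_{0}(C^{G}(M))$ we have $\delta\circ K(\sigma)=0$, so $\delta[E_{D}(I)]=\delta\bigl(K(\sigma)[E_{D_{Z}}(I)]\bigr)=0$, the desired contradiction.

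There is essentially no obstacle here: all of the substance is carried by \cref{eirogjwoegwergerwf} — the two assertions that $E_{D_{Z}}(I)$ actually lands in $C^{G,K}(X,Z)$ and that $\sigma$ carries it to $E_{D}(I)$ — and once those are granted the corollary is an immediate consequence of exactness together with the elementary behaviour of $K$-theory on projections. The only point requiring a moment's care is bookkeeping: that "$I$ is a spectral interval of $D_{Z}$'' is precisely the hypothesis (boundedness of $I$ and endpoints in the resolvent set of $D_{Z}$) under which $E_{D_{Z}}(I)$ is a well-defined projection in the Roe algebra, so that \cref{eirogjwoegwergerwf} applies.
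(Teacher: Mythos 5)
Your argument is correct and is essentially the paper's own: the authors state the corollary as a ``simple consequence'' of the exactness of \eqref{gergergregregerggwr}, granting exactly what you grant, namely that if $I$ is also a spectral interval of $D_{Z}$ then $E_{D_{Z}}(I)\in C^{G,K}(X,Z)$ with $\sigma(E_{D_{Z}}(I))=E_{D}(I)$ (equation \eqref{qewr09i0qwerqewrqr}, via the affiliation result \cref{eirogjwoegwergerwf}), so that $[E_{D}(I)]$ lies in the image of $K(\sigma)$ and $\delta[E_{D}(I)]=0$. The detailed versions in the body (\cref{ajfiovafdsvavdsvadv} and \cref{ewgiowgregwergwrg}) run the same contrapositive, merely phrasing the exactness step as $\delta^{C^{*}\prime}\circ K(p)\simeq 0$.
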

 
Note that the conclusion of the corollary means that at least one of the endpoints of $I$
belongs to the spectrum of $D_{Z}$. The corollary can therefore be employed to show that certain numbers must belong to the spectrum of $D_{Z}$. {Such statements are important in the theory of topological insulators, where one is interested in the existence of certain ``boundary-localised'' states that arise precisely from a spectral gap in the bulk Hamiltonian that is filled when introducing a boundary. For a coarse geometric discussion of these phenomena, see {\cite{EwertMeyer}, \cite{ThiangEdgeFollowing}, \cite{LudewigThiangCobordism} and \cite{LudewigThiangGapless}.}}

We will demonstrate this sort of application in the case of magnetic Hamiltonians in \cref{qeroighjwoergwregrweg9}.

In order to apply   \cref{weoigjoegergwegwregrewgw} 
 we must be able to calculate the boundary operator $\delta$ {of the long exact sequence} in \eqref{gergergregregerggwr}.  
 The main idea of the present paper is  
 to consider the construction of the sequence  \eqref{gergergregregerggwr} as a special
 case of a general construction in equivariant coarse homotopy theory. 

\medskip

The main objects of coarse homotopy theory  as developed in \cite{equicoarse} are equivariant coarse homology theories
$$E^{G}:G\BC\to \bC\ ,$$ where $G\BC$ is the category of $G$-bornological coarse spaces and the target $\bC$ is some stable $\infty$-category {such as the category of spectra}.  We consider a $G$-bornological coarse space   $X$  with a $K$-invariant subset  $Z$. In \cref{wegoijwegerwregwg}
 we define the obstruction morphism
$$r: E^{G}(X)\stackrel{c^{*}}{\to} E^{K}(\Res^{G}_{K}(X))\stackrel{\delta^{MV}}{\to} \Sigma E^{K}(\partial Z)\ .$$
Here $\Res^{G}_{K}(X)$ is the space $X$ considered as a $K$-bornological coarse space via the restriction along $K\to G$.
The symbol $E^{K}$ denotes the $K$-equivariant coarse homology theory which is naturally  derived from $E^{G}$ using a transfer structure, see \eqref{NotationEK}. The map 
$\delta^{MV}$ is the Mayer-Vietoris boundary map associated to the decomposition of $X$ into $Z$ and its complement
$\{X\setminus Z\}$, the big family  consisting of the coarse thickenings of $X\setminus Z$.
 Finally, the big family $\partial Z:=Z\cap \{X\setminus Z\}$ is the coarse version of the boundary of $Z$ (we refer \cite{equicoarse} for the basic definitions from equivariant coarse homotopy theory).
Since $\bC$ is a stable $\infty$-category we can extend the obstruction morphism $r$ to a {fibre} sequence \begin{equation}\label{}
E^{G,K}(X,Z)\to E^{G}(X)\stackrel{r}{\to} E^{K}(\partial Z)
\end{equation}
defining the relative object $E^{G,K}(X,Z)$, see \cref{wfeioweedsfd}.

We can apply these constructions to the equivariant coarse $K$-homology $K\cX^{G}$ {introduced in \cite{coarsek}} in place of $E^{G}$ and the manifold $M$ from above in place of $X$.
Except for degenerate cases we have natural isomorphisms of $K$-theory groups
$$K\cX^{G}_{*}(M)\cong K_{*}(C^{G}(M))\ , \quad K\cX^{K}_{*}(\partial Z)\cong K_{*}(C^{K}(\partial Z)) \ , $$
{see} \cite[Thm. 6.1]{indexclass}.
The following  theorem is a reformulation of   \cref{eiogwegergwegergwerg} in terms of homotopy groups.
  \begin{theorem}
 There exists a canonical isomorphism of long exact sequences
 $$\xymatrix{\ar[r]&K_{*}(C^{G,K}(M,Z))\ar[r]\ar[d]^{\cong}&K_{*}(C^{G}(M))\ar[d]^{\cong}\ar[r]^{\delta}&K_{*-1}(C^{K}(\partial Z))\ar[d]^{\cong}\ar[r]&\\\ar[r]&K\cX^{G,K}_{*}(M,Z)\ar[r]&K\cX^{G}_{*}(M)\ar[r]^{r_{*}}&K\cX^{K}_{*-1}(\partial Z)\ar[r]&}$$
 \end{theorem}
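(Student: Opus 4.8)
The plan is to obtain the statement as the homotopy-group shadow of the spectrum-level comparison \cref{eiogwegergwegergwerg}. First I would present both long exact sequences uniformly as the long exact sequences of homotopy groups attached to fibre sequences of spectra. Applying topological $K$-theory to the extension of $C^{*}$-algebras \eqref{ewqfojqwfqewfqewfe} (with $X=M$) yields a fibre sequence
\[
\mathbf{K}(C^{K}(\partial Z))\longrightarrow \mathbf{K}(C^{G,K}(M,Z))\longrightarrow \mathbf{K}(C^{G}(M))\ ,
\]
whose long exact sequence of homotopy groups, after the usual rotation, is the top row of the asserted diagram; here the connecting map $\mathbf{K}(C^{G}(M))\to\Sigma\mathbf{K}(C^{K}(\partial Z))$ induces on $\pi_{*}$ the boundary map $\delta$ of the extension, which lowers the degree by one. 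On the other hand the fibre sequence
\[
K\cX^{G,K}(M,Z)\longrightarrow K\cX^{G}(M)\stackrel{r}{\longrightarrow}\Sigma K\cX^{K}(\partial Z)
\]
from \cref{wfeioweedsfd} gives, on $\pi_{*}$, the bottom row: using $\pi_{n}\bigl(\Sigma K\cX^{K}(\partial Z)\bigr)\cong K\cX^{K}_{n-1}(\partial Z)$ one reads off both the degree shift $*\mapsto *-1$ on the $\partial Z$-terms and that the relevant map $\pi_{n}K\cX^{G}(M)\to\pi_{n-1}K\cX^{K}(\partial Z)$ is $r_{*}$.

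Next I would invoke \cref{eiogwegergwegergwerg}, which provides an equivalence between these two fibre sequences of spectra: on the term $\mathbf{K}(C^{G}(M))\simeq K\cX^{G}(M)$, respectively $\mathbf{K}(C^{K}(\partial Z))\simeq K\cX^{K}(\partial Z)$, it restricts to the comparison equivalences refining the isomorphisms of \cite[Thm.~6.1]{indexclass} (valid outside the degenerate situations excluded there), and on the third term it produces the induced equivalence $\mathbf{K}(C^{G,K}(M,Z))\simeq K\cX^{G,K}(M,Z)$. Applying $\pi_{*}$ to this commuting square of fibre sequences, and using naturality of the long exact homotopy sequence of a fibre sequence, yields precisely the claimed morphism of long exact sequences with all three vertical maps isomorphisms and all squares commuting; the middle vertical isomorphism is then $K_{*}(C^{G,K}(M,Z))\cong K\cX^{G,K}_{*}(M,Z)$. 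Should one prefer to avoid passing through the middle equivalence, the same conclusion follows from the five lemma applied to the partial morphism of long exact sequences given by the two known vertical isomorphisms together with commutativity of the square relating $\delta$ to $r_{*}$.

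The substantive input — that under the identifications of \cite{indexclass} the $C^{*}$-algebraic boundary map $\delta$ of \eqref{ewqfojqwfqewfqewfe} is carried to $r_{*}$, i.e.\ that restriction followed by the coarse Mayer--Vietoris boundary $\delta^{MV}$ matches the extension boundary of the Roe algebras — is already contained in \cref{eiogwegergwegergwerg}, so what remains here is bookkeeping. The main obstacle I anticipate is purely one of conventions: pinning down the rotation of the fibre sequences and the resulting signs so that the square relating $\delta$ and $r_{*}$ commutes on the nose rather than merely up to sign, and reconciling the $\Z/2$-graded conventions for $C^{*}$-algebraic $K$-theory with the $\Z$-grading of homotopy groups of spectra so that the degree shift $*\mapsto *-1$ is accounted for consistently across all three columns.
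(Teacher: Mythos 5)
Your proposal is correct and follows exactly the route the paper itself takes: the theorem is stated there as the homotopy-group reformulation of \cref{eiogwegergwegergwerg}, and your argument -- apply $\pi_{*}$ to the spectrum-level equivalence of fibre sequences, identify the connecting maps with $\delta$ and $r_{*}$, and read off the induced isomorphism on the third terms -- is precisely what is intended. The bookkeeping issues you flag (rotation conventions, degree shifts) are real but routine and do not affect the argument.
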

 We interpret this theorem and in particular the commutativity of the  right square  as a kind of index theorem relating the purely analytic construction of $\delta $ in terms of Roe algebras with the purely homotopy theoretic construction of $r_{*}$.
 The theorem reduces the calculation of $\delta$ to the calculation of the obstruction morphism $r$.

\medskip

 {\em Acknowledgements: U.\ Bunke  and M.\ Ludewig  were supported by the SFB 1085 ``Higher Invariants''
funded by the Deutsche Forschungsgemeinschaft DFG. }

\section{A homotopy theoretic construction}\label{wrthwoiejgioergrewg}

We consider a group $G$.  
Coarse geometry was invented by J. Roe \cite{MR1147350}, \cite{roe_lectures_coarse_geometry} as a way to capture the large-scale geometry
of metric spaces with an isometric $G$-action. 
In the present paper we use the formalization
in terms of $G$-bornological coarse spaces introduced in \cite{equicoarse}, see  \cite{Bunke:2023aa} for an overview. A $G$-bornological coarse space is a $G$-set {$X$} with a $G$-coarse structure consisting
of a collection of subsets $U$ of $X \times X$, called
coarse entourages, and a compatible $G$-invariant bornology determining the collection of bounded subsets of $X$. Morphisms between $G$-bornological coarse spaces are $G$-invariant maps which are controlled in the sense that they send coarse  entourages to coarse entourages and proper in the sense that preimages of bounded sets are bounded. 
The category of $G$-bornological coarse spaces and invariant controlled and proper maps is called the category of $G$-bornological coarse spaces, {denoted} $G\BC$.

Typical examples of $G$-bornological coarse spaces are the group $G$ itself with the left-action {on itself}, the minimal bornology and the minimal or the canonical  $G$-coarse structures, denoted by $G_{min,min}$ and $G_{can,min}$, respectively. Here the canonical coarse structure is the smallest $G$-coarse structures containing all
enourages $\{(g,g')\}$ for pairs $g,g'$ in $G$.

A basic tool to  construct coarse invariants of $G$-bornological coarse spaces are equivariant coarse homology theories. 
In the present paper we use the axiomatization introduced in \cite{equicoarse},  see also   \cite{Bunke:2023aa}. A $G$-equivariant coarse homology theory is a functor $$E^{G}:G\BC\to \bC $$
to some cocomplete stable $\infty$-category ${\bC}$ which is coarsely invariant, excisive, annihilates flasques,
and is $u$-continuous. For a detailed description of these properties we refer to \cite{equicoarse}. 
For the present paper the most important example is the equivariant coarse $K$-homology $K\cX^{G}$ constructed in \cite{coarsek}.

Given a
 $G$-equivariant coarse homology theory $$E^{G}:G\BC\to \bC $$  and a $G$-bornological coarse space $X$, in the present section we describe {a}  construction which may produce information about $E^{G}(X)$ by breaking the symmetry. 
Our main result   is the construction of the relative object $E^{G,K}(X,Z)$ in \cref{wfeioweedsfd} and the fibre sequence \eqref{fsfdkijvoirjovwevewvervwv}.

Let $K$ be a subgroup of $G$ and consider the {induction  functor}
$$\Ind_{K}^{G}:K\BC\to G\BC$$ 
sending a $K$-bornological coarse space $X$ to the $G$-set $G \times_K X$ with the appropriate coarse and bornological structures
\cite[Sec.\ 6.5]{equicoarse}.
We  define an  associated $K$-equivariant coarse homology $E^{K}$  by 
\begin{equation}\label{NotationEK}
E^{K}:=E^{G}\circ \Ind_{K}^{G}:K\BC\to \bC
\end{equation}
  \cite[Lemma 4.19]{desc}.
In the case of the trivial group $K=\{1\}$ we will 
{write simply $E$ instead of $E^K$}.
In this case the induction functor is given in terms of the symmetric monoidal structure of $G\BC$ by 
$ \Ind^{G}(X)\cong G_{min,min}\otimes \Res_{G}(X)$, where $\Res_{G}$ equips the bornological coarse space $X$ with the trivial $G$-action.
We therefore 
 have the equivalence  \begin{equation}\label{ewqfqefopkpoqwefqwefewfwqf}
E(X)  \simeq  E^{G}(G_{min,min}\otimes \Res_{G}(X))\ .
\end{equation}
 \begin{rem}\label{wfrrefwerfergwrgwergwefref}
  Often we are in the situation that we already have a family of  equivariant coarse homology theories  $(E^{H})_{H}$  (with a fixed target)  for all {groups} $H$, which at first sight clashes with the notation \eqref{NotationEK}.
However, in all relevant examples the homology theory $E^{G}\circ \Ind_{K}^{G}$ derived from  the member $E^{G}$ of the family   using \eqref{NotationEK}
is equivalent to the member of the family denoted by the same symbol $E^{K}$. In this situation we will say that the family has induction equivalences.

 Examples of families with  induction equivalences are  the family of equivariant coarse ordinary homology theories   $(H\cX^{{H}})_{{H}}$ \cite[Prop. 7.12]{equicoarse} and the family of coarse algebraic $K$-theories  $(K\bA\cX^{{H}})_{{H}}$ associated to an additive category $\bA$ (with trivial action) \cite[Prop. 8.27]{equicoarse}.  {For the present paper, the relevant example is the coarse topological $K$-homology associated to a $C^{*}$-category with a 
$G$-action  obtained in \cite[Thm. 6.3]{coarsek} (for the $K$-theory   of $C^{*}$-categories in place of homological functor $\Homol$), see   \cref{thrertherhertheth}. {That this example has induction equivalences has been shown in}  \cite[Cor. 10.5.{1}]{coarsek}.}
 \end{rem}

We {also} have a {restriction functor}
$$\Res^{G}_{K}:G\BC\to K\BC$$
which sends a $G$-bornological space $X$ to the $K$-bornological space obtained by restricting the action from $G$ to $K$.
  If $X$ is a $G$-bornological coarse space, then we have a natural map of coarse spaces 
   \begin{equation}\label{wefqwefqewfqwefewfqwef}
 c_{X}:G\times_{K} X\to X\ , \quad  [g,x]\mapsto gx  \ .
\end{equation}
In general this {morphism in $G\Coarse$} is not proper (and hence not a morphism in $G\BC$),  but it is  a  {bounded covering} \cite[Def.\ 2.14]{coarsetrans}.  

We now assume that $E^{G}$ {admits transfers} \cite[Def.\ 1.2]{coarsetrans}.
{This means that $E^{G}$ has a contravariant functoriality with respect to bounded coverings in addition to the covariant functoriality
for morphisms in $G\BC$. The compatibility of these operations is encoded in the category $G\BC_{\tr}$ of $G$-bornological coarse spaces with transfers introduced in  \cite[Sec.2.2]{coarsetrans}.}
  The family of bounded coverings $c=(c_{X})$ can be considered as natural transformation of functors 
\begin{equation}\label{asdfasfdfasfadqew}
c:  \Ind_{K}^{G}\circ \Res^{G}_{K} \to    \iota : G\BC\to G\BC_{\tr}\ ,
\end{equation}  where $\iota:G\BC\to G\BC_{\tr}$ is the canonical embedding  \cite[Sec.2.33]{coarsetrans}.
{Applying $E^G$, we obtain a} natural transformation \begin{equation}\label{adsfawefdfa}
 {c^{G}_{K}}:E^{G}\to E^{G}\circ \Ind_{K}^{G}\circ \Res^{G}_{K}\stackrel{\eqref{NotationEK}}{\simeq} E^{K}\circ \Res^{G}_{K} \ .
\end{equation}

For a $K$-invariant subset $Y$ of $X$ we let $\{Y\}:=(U[Y])_{U\in \cC^{K}_{X}}$ denote the $K$-invariant  {big family} {\cite[Def.\ 3.5]{equicoarse}} consisting of all of $U$-thickenings of $Y$ for   $K$-invariant coarse  entourages $U$ in $\cC^K_X$.
Let {now}  $Z$ be a $K$-invariant subset of $X$ with the induced $K$-bornological coarse structure. Observe that $X \setminus Z$ is also $K$-invariant. We get  a $K$-invariant {complementary pair} $(Z,\{X\setminus Z\})$ on $\Res^{G}_{K}(X)$ {\cite[Def.\ 3.7]{equicoarse}}.  
We let $i:E^{K}(Z)\to E^{K}(\Res^{G}_{K}(X))$ and
$j:E^{K}(\{X\setminus Z\})\to E^{K}(\Res^{G}_{K}(X))$
denote the maps induced by the canonical inclusions.

 \begin{ddd}\label{wfeioweedsfd}
We define the object $E^{G,K}(X,Z)$ in $\bC$ by the pull-back 
\begin{equation*}
\begin{tikzcd}[column sep=1.5cm]
E^{G,K}(X,Z)\ar[r, "{e_{Z}\oplus e_{X\setminus Z}}", dashed] \ar[d, "s"', dashed]  & E^{K}(Z)\oplus E^{K}(\{X\setminus Z\})\ar[d, "i+j"]\\ E^{G}(X)\ar[r, "{c^{G}_{K}}"] &E^{K}(\Res^{G}_{K}(X))\ .
\end{tikzcd} \qedhere
\end{equation*}
 \end{ddd}

We consider the big family \begin{equation}\label{feorihioweregw}
\partial Z:=Z\cap \{X\setminus Z\}
\end{equation}
defined by intersecting the members of  the $K$-invariant big family $\{X\setminus Z\}$ with $Z$. It plays the {coarse geometric role of the boundary} of $Z$, hence the notation.

\begin{lem}
We have a canonical  fibre sequence
 \begin{equation}\label{fsfdkijvoirjovwevewvervwv}
\dots \to E^{K}(\partial Z)\to E^{G,K}(X,Z)\stackrel{s}{\to} E^{G}(X)\stackrel{r }{\to} \Sigma E^{K}(\partial Z)\to \dots
\end{equation}
\end{lem}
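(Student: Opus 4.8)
The plan is to deduce the fibre sequence purely formally from the defining pull-back square in \cref{wfeioweedsfd} together with the Mayer--Vietoris fibre sequence attached to the $K$-invariant complementary pair $(Z,\{X\setminus Z\})$ on $\Res^G_K(X)$. First I would recall that, since $E^K$ is an equivariant coarse homology theory, it satisfies excision: the complementary pair $(Z,\{X\setminus Z\})$ gives a push-out/pull-back (bicartesian) square in the stable $\infty$-category $\bC$
\begin{equation*}
\begin{tikzcd}
E^{K}(\partial Z)\ar[r]\ar[d] & E^{K}(Z)\ar[d, "i"]\\
E^{K}(\{X\setminus Z\})\ar[r, "j"] & E^{K}(\Res^{G}_{K}(X))\ ,
\end{tikzcd}
\end{equation*}
where the corner $E^K(\partial Z) = E^K(Z\cap\{X\setminus Z\})$ is exactly the big family from \eqref{feorihioweregw}. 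Equivalently, the map $(i+j)\colon E^{K}(Z)\oplus E^{K}(\{X\setminus Z\}) \to E^{K}(\Res^{G}_{K}(X))$ has fibre $E^K(\partial Z)$ (up to the usual sign/shift bookkeeping), i.e. there is a fibre sequence $E^K(\partial Z)\to E^K(Z)\oplus E^K(\{X\setminus Z\})\xrightarrow{i+j} E^K(\Res^G_K(X))$, whose connecting map rotates to the Mayer--Vietoris boundary $\delta^{MV}\colon E^K(\Res^G_K(X))\to \Sigma E^K(\partial Z)$.

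Next I would feed this into the pull-back defining $E^{G,K}(X,Z)$. In a stable $\infty$-category, a pull-back square is also a push-out, and taking fibres of parallel vertical (or horizontal) arrows in a bicartesian square yields an equivalence of fibres. Applying this to the square in \cref{wfeioweedsfd}: the fibre of the right vertical map $i+j$ is $E^K(\partial Z)$ by the previous paragraph, and the fibre of the left vertical map $s\colon E^{G,K}(X,Z)\to E^G(X)$ is therefore canonically equivalent to $E^K(\partial Z)$ as well. Rotating the resulting fibre sequence $\mathrm{fib}(s)\to E^{G,K}(X,Z)\xrightarrow{s} E^G(X)$ gives
\begin{equation*}
E^{K}(\partial Z)\to E^{G,K}(X,Z)\xrightarrow{s} E^{G}(X)\to \Sigma E^{K}(\partial Z)\ .
\end{equation*}
It remains to identify the last map with $r$. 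By the pasting/pull-back calculus, the connecting map $E^G(X)\to \Sigma\,\mathrm{fib}(i+j) = \Sigma E^K(\partial Z)$ is the composite of $c^G_K\colon E^G(X)\to E^K(\Res^G_K(X))$ (the bottom arrow of the square) with the connecting map $E^K(\Res^G_K(X))\to \Sigma E^K(\partial Z)$ of the Mayer--Vietoris sequence, which is $\delta^{MV}$. Since $r$ was \emph{defined} as $r = \delta^{MV}\circ c^G_K$ (note that on the trivial subgroup $c^G_K$ is the map called $c^*$ in the introduction, and in general it is the transfer map $c^G_K$ of \eqref{adsfawefdfa}), this identifies the last map with $r$, completing the fibre sequence \eqref{fsfdkijvoirjovwevewvervwv}.

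The only genuine content beyond formal stable-category manipulation is the excision step: that the $K$-invariant complementary pair $(Z,\{X\setminus Z\})$ produces the bicartesian square above with corner the big family $\partial Z$. This is where I expect the main work to sit, but it is not new — it is the standard interaction between the excisiveness axiom for an equivariant coarse homology theory and the formation of big families $\{Y\}$ and intersections $Z\cap\{X\setminus Z\}$, and it is already available from \cite{equicoarse}; one should just cite the relevant excision statement there and possibly the elementary identity $\partial Z = Z\cap\{X\setminus Z\}$ together with the fact that $(Z,\{X\setminus Z\})$ is indeed a complementary pair on $\Res^G_K(X)$, which was observed just before \cref{wfeioweedsfd}. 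Everything else — the equivalence $\mathrm{fib}(s)\simeq\mathrm{fib}(i+j)$, the rotation of fibre sequences, and the identification of the connecting map via the pull-back pasting lemma — is routine in $\bC$ and can be stated without computation.
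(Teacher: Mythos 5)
Your proposal is correct and follows essentially the same route as the paper: excision for the complementary pair $(Z,\{X\setminus Z\})$ gives the Mayer--Vietoris fibre sequence with corner $E^{K}(\partial Z)$, and pasting this cartesian square onto the defining pull-back square of \cref{wfeioweedsfd} yields the outer cartesian square, i.e.\ the fibre sequence \eqref{fsfdkijvoirjovwevewvervwv} with last map $\delta\circ c^{G}_{K}=r$. Your formulation via ``equivalence of fibres of parallel arrows in a pull-back square'' is just the fibrewise restatement of the paper's pasting argument, so there is nothing to add.
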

\begin{proof}
Since $(Z,\{X\setminus Z\})$ is a $K$-invariant complementary pair on $\Res^{G}_{K}(X)$, by the excision axiom for $E^{K}$  \cite[Def.\ 3.10 (2)]{equicoarse}, we have 
{a cartesian square}
\begin{equation*}
\begin{tikzcd}
E^K(\partial Z) \ar[r] \ar[d]& E^K(Z) \ar[d, "i"]\\
E^K(\{X\setminus Z\}) \ar[r, "j"'] & E^K(\Res^G_K(X)),
\end{tikzcd}
\end{equation*}
leading to
\begin{equation} \label{FibreSequenceZXZ}
  E^K(\partial Z) \to E^{K}(Z)\oplus E^{K}(\{X\setminus Z\}) \to E^K(\Res^G_K(X)) \stackrel{\delta}{\to} \Sigma E^K(\partial Z)
\end{equation}
in $\bC$. Equivalently, the right square in the following diagram is  cartesian:
\begin{equation}\label{vwjjvweiofvw}
\begin{tikzcd}[column sep=1.4cm]
E^{G,K}(X,Z)\ar[r, "{e_{Z}\oplus e_{X\setminus Z}}"] \ar[d, "s"]  & E^{K}(Z)\oplus E^{K}(\{X\setminus Z\}) \ar[r]\ar[d, "i+j"] & 0\ar[d] \\
E^{G}(X)\ar[r, "{c^{G}_{K}}"] & E^{K}(\Res^{G}_{K}(X))\ar[r, "\delta"] & \Sigma E^{K}(\partial Z) .
\end{tikzcd}
\end{equation}
Since the left square of \eqref{vwjjvweiofvw} is cartesian by  \cref{wfeioweedsfd} we conclude that the outer square is   cartesian. 
This is the desired fibre sequence \eqref{fsfdkijvoirjovwevewvervwv}, {defining $r$ as the composition of the two bottom horizontal maps.}
\end{proof}

\begin{ddd}\label{wegoijwegerwregwg}
We define the obstruction morphism as the composition \begin{equation}\label{eqwfpokqpowefqewfqwefewf}
r \simeq \delta  \circ {c^{G}_{K}}:E^{G}(X)\to \Sigma E^{K}(\partial Z)\ . \qedhere
\end{equation} 
\end{ddd}


Let $f:X^{\prime}\to X$ be a morphism of $G$-bornological coarse spaces and define the $K$-invariant subset $Z':=f^{-1}(Z)$ of $X'$.

\begin{lem}\label{twiojwtohwthtwhtgwerg}
There  exists a morphism 
\begin{equation*}
\tilde f_{*} : E^{G,K}(X^{\prime},Z^{\prime}) \longrightarrow E^{G,K}(X,Z)
\end{equation*}
fitting into a morphism of fibre sequences
\begin{equation}\label{evfvsdfvfdvs}
\begin{tikzcd}[column sep = 0.7cm]
\dots \ar[r] & E^{K}(\partial Z^{\prime})\ar[r]\ar[d, "\partial f_{*}"] & E^{G,K}(X^{\prime},Z^{\prime}) \ar[r, "s^{\prime}"]\ar[d, "\tilde f_{*}"] &E ^{G}(X^{\prime})\ar[d, "f_{*}"] \ar[r, "r^{\prime}"] &\Sigma E^{K}(\partial Z^{\prime})\ar[r]\ar[d, "\partial f_{*}"] &\dots\\ \dots\ar[r]&E^{K}(\partial Z)\ar[r] &E^{G,K}(X,Z)\ar[r, "s"]&E^{G}(X)\ar[r, "r"]&\Sigma E^{K}(\partial Z)\ar[r]&\dots
\end{tikzcd}
\end{equation}
\end{lem}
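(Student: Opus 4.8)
The plan is to construct $\tilde f_*$ via the functoriality of the pull-back in \cref{wfeioweedsfd} and then check that the induced map of fibre sequences is the one in \eqref{evfvsdfvfdvs}. The key observation is that the morphism $f : X' \to X$ induces, by naturality of all the ingredients, a morphism between the defining pull-back squares. Concretely, since $Z' = f^{-1}(Z)$, restriction of $f$ gives $K$-equivariant maps $Z' \to Z$ and $\Res^G_K(X') \to \Res^G_K(X)$, and $f$ carries the big family $\{X' \setminus Z'\}$ into $\{X \setminus Z\}$ (a $U$-thickening of $X' \setminus Z'$ maps into the $f(U)$-thickening of $f(X' \setminus Z') \subseteq X \setminus Z$), hence induces a map $E^K(\{X'\setminus Z'\}) \to E^K(\{X \setminus Z\})$. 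The transformation $c^G_K$ of \eqref{adsfawefdfa} is natural in the $G$-bornological coarse space, so the lower squares (the ones with $c^G_K$ and $i+j$) commute up to coherent equivalence. First I would assemble this into a commuting cube whose front and back faces are the pull-back squares defining $E^{G,K}(X',Z')$ and $E^{G,K}(X,Z)$; the universal property of the pull-back then produces the map $\tilde f_*$ together with the commuting squares $s \circ \tilde f_* \simeq f_* \circ s'$ and $(e_Z \oplus e_{X\setminus Z}) \circ \tilde f_* \simeq (\cdots) \circ (e_{Z'} \oplus e_{X'\setminus Z'})$.

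Next I would produce the map $\partial f_* : E^K(\partial Z') \to E^K(\partial Z)$ on the coarse boundaries. Since $f(U[Y]) \subseteq f(U)[f(Y)]$ for any subset $Y$ and entourage $U$, and $f(Z') \subseteq Z$, $f(X' \setminus Z') \subseteq X \setminus Z$, the map $f$ sends the members of $\partial Z' = Z' \cap \{X' \setminus Z'\}$ into the members of $\partial Z = Z \cap \{X \setminus Z\}$, giving a map of big families and hence $\partial f_*$. By naturality of the excision square / Mayer–Vietoris boundary map $\delta$ (from \cite[Def.\ 3.10 (2)]{equicoarse}) with respect to morphisms of complementary pairs, the square relating $\delta'$, $\delta$, the maps $c^G_K$, and $\partial f_*$ commutes; combined with naturality of $c^G_K$ this gives $r \circ f_* \simeq \Sigma(\partial f_*) \circ r'$, i.e.\ the commutativity of the third square in \eqref{evfvsdfvfdvs}.

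Finally I would identify the fibre-sequence structure. In the proof of the preceding lemma the fibre sequence \eqref{fsfdkijvoirjovwevewvervwv} was obtained by pasting: the left square of \eqref{vwjjvweiofvw} is the defining pull-back, and the right square is the shifted excision square for $\partial Z$. For $X'$ we have the analogous pasted diagram. The cube constructed above, together with the naturality of the excision square in $(X,Z)$, gives a morphism between the two pasted diagrams of \eqref{vwjjvweiofvw}; passing to horizontal fibres (and using that the connecting maps in a fibre sequence obtained by pasting cartesian squares are natural) yields exactly the morphism of fibre sequences \eqref{evfvsdfvfdvs} with the leftmost vertical arrow being the shift-desuspension of $\partial f_*$, which is $\partial f_*$ itself after rotating the triangle.

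I expect the main obstacle to be bookkeeping of the coherences: one must check that the various squares commute not just up to (unspecified) equivalence but compatibly, so that the universal properties apply and the resulting map of fibre sequences is well-defined in the $\infty$-categorical sense. The cleanest way to handle this is to phrase the whole construction functorially: the assignment $(X,Z) \mapsto$ (the pull-back diagram of \cref{wfeioweedsfd} pasted with the excision square) is a functor from the category of pairs $(X, Z)$ with $Z = $ a $K$-invariant subset (and morphisms $f$ with $Z' = f^{-1}(Z)$) to the $\infty$-category of diagrams in $\bC$ of the relevant shape, and \eqref{evfvsdfvfdvs} is the image of the morphism $f : (X',Z') \to (X,Z)$ under this functor composed with the functor taking horizontal fibres. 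Setting it up this way reduces the statement to naturality of excision (already invoked) and naturality of $c^G_K$ (the transformation \eqref{adsfawefdfa}), both of which are given.
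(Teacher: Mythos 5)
Your proposal is correct and follows essentially the same route as the paper: naturality of the transfer $c^{G}_{K}$, the observation that $f(X'\setminus Z')\subseteq X\setminus Z$ induces morphisms of big families and hence $\partial f_{*}$, and naturality of the Mayer--Vietoris boundary, assembled into the commuting square $r\circ f_{*}\simeq \Sigma(\partial f_{*})\circ r'$ which is then extended to the full ladder. The only (immaterial) difference is that you obtain $\tilde f_{*}$ first from the universal property of the defining pull-back cube, whereas the paper first establishes the right-hand square and then extends to the morphism of fibre sequences.
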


\begin{proof}
%
 First of all, the map $f$ induces morphisms $f_{*}:E^{G}(X^{\prime})\to E^{G}(X)$ and $\Res^{G}_{K}(f)_{*}:E^{K}(\Res^{G}_{K}(X^{\prime}))\to E^{K}(\Res^{G}_{K}(X))$, and the square
\begin{equation}\label{qeoirgerwfgrefwe}
\begin{tikzcd}
  E^G(X^\prime) \ar[d, "f_*"'] \ar[r, "{c^{G}_{K }}"] & E^K(\Res^G_K(X^\prime)) \ar[d, "\Res^G_K(f)_*"] \\
  E^G(X) \ar[r, "{c^{G}_{K}}"] & E^K(\Res^G_K(X))\end{tikzcd}
\end{equation}
commutes by naturality of the transfer.

Moreover, since $f(X'\setminus Z')\subseteq X\setminus Z$,  for every $U^{\prime}$ in $\cC_{X^{\prime}}^{G}$ we have $f( U^\prime [X^{\prime}\setminus Z^{\prime}])\subseteq   f(U^{\prime})[X\setminus Z]$
and $f( Z^\prime \cap U^\prime[X^{\prime}\setminus Z^{\prime}])\subseteq  Z\cap f(U^{\prime})[X\setminus Z]$. Therefore $f$ induces   morphism{s} of big families
$f_{X'\setminus Z'}:\{X^{\prime}\setminus Z^{\prime}\}\to \{X\setminus Z\}$ and $\partial f:\partial Z^{\prime}\to \partial Z$, and the  diagram  
\begin{equation}\label{weiorgwrfrefrewfrf}
\xymatrix{
  E^{K}(Z')\oplus E^{K}(\{X'\setminus Z'\}) \ar[r]^-{i'+j'}\ar[d]^{f_{Z',*}\oplus f_{X'\setminus Z',*}}&E^K(\Res^G_K(X'))\ar[d]^{\Res^{G}_{K}(f)_{*}} \ar[r]^{\delta}&\Sigma E^{K}(\partial Z')\ar[d]^{  \partial f_*}\\
 E^{K}(Z) \oplus E^{K}(\{X\setminus Z\})\ar[r]^{i+j}&E^K(\Res^G_K(X)) \ar[r]^{\delta} &\Sigma E^{K}(\partial Z) } 
\end{equation}
commutes by the naturality of the Mayer-Vietoris fibre sequence.
The  horizontal concatenation of the square in \eqref{qeoirgerwfgrefwe} and the right square in  \eqref{weiorgwrfrefrewfrf} yields the rightmost commutative square in \eqref{evfvsdfvfdvs}, which extends to the desired morphism \eqref{evfvsdfvfdvs} of fibre sequences.
%
\end{proof}

  \begin{lem}\label{efiojwerogergwreg}
 If  $\partial Z'$   consists of empty sets,
 then the composition $r\circ f_{*}:E^{G}(X^\prime)\to E^{G}(X)\to E^{K}(\partial Z)$ vanishes.
 \end{lem}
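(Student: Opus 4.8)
The plan is to factor the composite $r\circ f_*$ through $\Sigma E^{K}(\partial Z')$ and then to observe that this object vanishes under the hypothesis. For the first step I would invoke \cref{twiojwtohwthtwhtgwerg}: the rightmost square of the morphism of fibre sequences \eqref{evfvsdfvfdvs} says precisely that $r\circ f_*\simeq \partial f_*\circ r'$, where $r'\colon E^{G}(X')\to \Sigma E^{K}(\partial Z')$ is the obstruction morphism for the pair $(X',Z')$ and $\partial f_*\colon \Sigma E^{K}(\partial Z')\to \Sigma E^{K}(\partial Z)$ is induced by the map of big families $\partial f\colon \partial Z'\to \partial Z$. (Equivalently one can paste together the two commuting squares \eqref{qeoirgerwfgrefwe} and \eqref{weiorgwrfrefrewfrf}, recalling that $r=\delta\circ c^{G}_{K}$ and $r'=\delta'\circ c^{G}_{K}$; this is the same computation.)

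For the second step, the hypothesis that $\partial Z'$ ``consists of empty sets'' means that every member $Z'\cap U'[X'\setminus Z']$ of the $K$-invariant big family $\partial Z'=Z'\cap\{X'\setminus Z'\}$ is the empty $K$-bornological coarse space. Since $E^{K}$ is an equivariant coarse homology theory it sends $\emptyset$ to a zero object, and by definition its value on a big family is the colimit of its values on the members; hence $E^{K}(\partial Z')\simeq\colim_{U'}E^{K}(\emptyset)\simeq 0$, and consequently $\Sigma E^{K}(\partial Z')\simeq 0$ as well.

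Combining the two steps, $r'$ is a morphism whose target is a zero object, so $r'\simeq 0$, and therefore $r\circ f_*\simeq\partial f_*\circ r'\simeq 0$, as claimed. I do not expect a genuine obstacle here; the only point worth spelling out carefully is that ``consists of empty sets'' really does force $E^{K}(\partial Z')$ to be a zero object, which is immediate from the vanishing of coarse homology theories on the empty space together with the colimit definition of the value on a big family.
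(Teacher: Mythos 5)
Your argument is correct and is essentially identical to the paper's proof: both deduce $E^{K}(\partial Z')\simeq 0$ from the hypothesis, conclude $r'\simeq 0$, and then use the commutativity of the right square in \eqref{evfvsdfvfdvs} to get $r\circ f_*\simeq \partial f_*\circ r'\simeq 0$. The extra detail you supply about the colimit over the members of the big family is just an unpacking of what the paper leaves implicit.
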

 
\begin{proof}
  Since $\partial Z'$  consists of empty sets, {we have $E^K(\partial Z') \simeq 0$, hence} $r^{\prime}\simeq 0$. The commutativity of the right square in \eqref{evfvsdfvfdvs} implies the assertion.
 \end{proof}
 
 \begin{ex}\label{weigjieqrogegwegrewg}
 If  $X'$ is obtained from $X$ by replacing the coarse structure by a smaller one, then the identity map is a morphism $X'\to X$.    
 In particular, we can take $X^\prime = X_{\mathrm{disc}}$, the bornological coarse space with the same underlying space and bornology as $X$ but the minimal coarse structure, generated by the empty set. This means that all entourages $U$ of $X^\prime$ are subsets of the diagonal, so that $U[X\setminus Z] \subseteq X \setminus Z$. This implies $Z \cap U[X\setminus Z] = \emptyset$ for each entourage $U$ of $X_{\mathrm{disc}}$, so $\partial Z = \{\emptyset\}$, the condition of  \cref{efiojwerogergwreg}.
\end{ex}

\begin{ex}\label{egiweogergewrgwergwerg}The following idea is inspired by  \cite[Sec. 3.2]{Hanke:aa}. 
Let {$a:Y\to Z$ be the embedding of a $K$-invariant subset} of $Z$.
Following \cite[Def. 3.9]{Hanke:aa} we could call $Y$ coarsely $E^{K}$-negligible if the map $E^{K}(a):E^{K}(Y)\to E^{K}(Z)$ is zero. The following more restrictive notion captures a reason for this triviality.

\begin{ddd}
$Y$ admits a {flasque exit} in $Z$ if the 
 embedding $a:Y\to Z$ extends to a morphism    $h:[0,\infty)\otimes Y\to Z$ in $K\BC$ such that $h_{|\{0\}\times Y}=a$.
\end{ddd}
 
  For example, if $K=\{1\}$,  $Y$ is $U$-bounded for some entourage $U$ of $X$ and there exists a coarse ray in $Z$ (a map from $[0,\infty)$ to $Z$) starting in a point of $Y$, then $Y$ admits a flasque exit in $Z$ \cite[Prop. 3.10]{Hanke:aa}.

 If $Y$ admits a flasque exit in $Z$, then the inclusion induces a zero map in coarse homology,
  $$0\simeq E^{K}(a):E^{K}(Y)\to E^{K}(Z)\ .$$
This follows from the fact that vanishing on flasque spaces is one of the defining properties of an equivariant coarse homology theory \cite[Definition~3.10]{equicoarse}.
 Indeed, ${E^{K}(a)}$ factorizes as \smash{$E^{K}(Y)\to E^{K}([0,\infty)\otimes Y)\stackrel{h_{*}}{\to} E^{K}(Z)$}, and $ E^{K}([0,\infty)\otimes Y)\simeq 0$ by flasqueness of {$[0,\infty)\otimes Y$}. 
 So the choice of $h$ provides a preferred choice of an equivalence ${0\simeq E^{K}(a)}$.
 
 \begin{ddd}\label{thiowrhrtehetrheth}
 We say that  the big family $\partial Z$ admits a  {two-sided flasque exit}, if for every $U$ in $\cC^{K}_{X}$ the  member $Z\cap U[X\setminus Z]$ of $\partial Z$ admits a flasque exit in  {both} $Z$ and   $U[X\setminus Z]$.
 \end{ddd}

If $ \partial Z$ admits a two-sided  flasque  exit, then the fibre sequence \eqref{FibreSequenceZXZ}  splits. 
We therefore get  
 a decomposition
\begin{equation*}
E^{K}(\Res^{G}_{K}(X))\simeq E^{K}(Z)\oplus E^{K}(\{X\setminus Z\})\oplus \Sigma E^{K}(\partial Z),
\end{equation*}
 where $\delta$ corresponds to the projection onto the last summand. 
 \end{ex}

\begin{ex}\label{gijergoiewrgwergwegwreg}
In this example we describe {a} situation where $\partial Z$ has a two-sided flasque exit.
Let $m,n$ be in $\nat$ and
consider the group $G:=\Z^{m+n}$. Then 
 $X:=\R^{m+n}$ is a   $\Z^{m+n}$-bornological coarse space with the structures induced by the standard metric with the usual action by translations. 
  We let  $K:=\Z^{m}$ be the first summand of $\Z^{m+n}\cong \Z^{m}\oplus \Z^{n}$. We furthermore take 
  the $\Z^{m}$-invariant subset
 \[Z:=\R^{m+n}_{+} :=\{ (x,y)\in \R^{m+n}\:|\:y\in [0,\infty)^{n}\}\ ,\]
 where identify $\R^{m+n}\cong \R^{m}\times \R^{n}$ in order to write its elements as pairs.
Note that $\partial Z$ admits a two-sided flasque exit (see \cref{thiowrhrtehetrheth}).

 But actually in this case the situation is even simpler.
Namely,   $Z$ and all members of $\{X\setminus Z\}$ are flasque.   This implies that the boundary map $\delta$ in the cofibre sequence \eqref{FibreSequenceZXZ} is an equivalence. The obstruction morphism  \begin{equation*}
 r :  E^{\Z^{m+n}}(\R^{m+n})   \longrightarrow   {\Sigma} E^{\Z^{m}}(\partial Z)
  \end{equation*}
  in the fiber sequence \eqref{fsfdkijvoirjovwevewvervwv} is therefore equivalent to the transfer morphism 
  \begin{equation*}
{ c^{\Z^{m+n}}_{\Z^{m}}} :  E^{\Z^{m+n}}(\R^{m+n}) \to   E^{\Z^{m}}(\Res^{\Z^{m+n}}_{\Z^{m}}(\R^{m+n})).\qedhere
  \end{equation*} 
 \end{ex}

    \section{Topological coarse $K$-homology}\label{thrertherhertheth}

If $\bC$ is a  $C^{*}$-category with {strict} $G$-action {admitting all orthogonal AV-sums \cite[Def. 7.1]{cank},  then we  have the equivariant coarse $K$-homology functor $$K\cX_{\bC}^{G}:G\BC\to \Sp$$ with transfers \cite[Def. 6.2.1 \& Thm. 6.3 \& 9.7]{coarsek} (with $K$-theory functor for $C^{*}$-categories  \cite[Def. 14.3]{cank} in place of $\Homol$).}
If we specialize this construction to the category {$\bC=\Hilb_{c}(\C)$ of Hilbert spaces and compact operators} with the trivial $G$-action, then the resulting functor will be denoted by $ K\cX^{G}$.    For simplicity, in the present paper we will stick to this example.

On very proper $G$-bornological  coarse spaces represented by locally compact metric spaces with isometric $G$-action this is the usual equivariant coarse $K$-theory. Its value on such a space   
  is the $K$-theory of the equivariant Roe algebra associated to this space  \cite[Thm. 6.1]{indexclass}, see also \eqref{bsgfokwtpobsgbsdfbdfsbfdsb}. For $\bC=\Hilb_{c}(A)$ the $C^{*}$-category of
  Hilbert $C^{*}$-modules of a $C^{*}$-algebra $A$ with $G$-action and compact operators the functor
 $K\cX_{\bC}^{G}$ is related {to} the topological equivariant $K$-homology  functor with coefficients in $A$ $$K^{G}_{A}:G\Orb\to \Sp$$ constructed by \cite{davis_lueck} (see also \cite{kranz}) 
appearing in the Baum-Connes conjecture  by
$$K^{G}_{A}(S)\simeq K\cX^{G}_{\bC}(S_{min,max}\otimes G_{can,min})\ , \quad S\in G\Orb\ .$$ 
Here $S_{min,max}$ is the $G$-bornological coarse space given by the  transitive $G$-set $S$ equipped with the minimal coarse structure and the maximal bornology. For the trivial group the coarse homology theory  $K\cX$ constitutes the  target of the
coarse assembly map featuring the  coarse Baum-Connes conjecture and should not be confused with the domain of this assembly map which is the coarsification of the locally finite   $K$-homology theory represented by the spectrum $KU$ \cite[Sec. 7]{buen} and \cite{ass}.

Note that by \cite[Thm. 6.3]{coarsek} the family $(K\cX^{G})_{G}$ of equivariant coarse homology theories has induction equivalences in the sense explained in \cref{wfrrefwerfergwrgwergwefref}.

We  now apply the constructions described in \cref{wrthwoiejgioergrewg} to the equivariant coarse homology theory with transfers $K\cX^{G}$. 

 In particular, for a subgroup $K$ of $G$ and a $K$-invariant subset $Z$ of $X$ we can construct the spectrum $K\cX^{G,K}(X,Z)$ as in \cref{wfeioweedsfd} and obtain the fibre sequence \eqref{fsfdkijvoirjovwevewvervwv}:
 \begin{equation}\label{fsfdkijvoirjovwevewvervwv12}
\dots \to K\cX^{K}(\partial Z)\to K\cX^{G,K}(X,Z)\to K\cX^{G}(X)\stackrel{r }{\to} \Sigma K\cX^{K}(\partial Z)\to \dots\ .
\end{equation}

Under certain restrictions on $G$ and $X$ the values of  
$K\cX^{G}(X)$ 
 can be described in terms of the topological $K$-theory of  Roe algebras. The main result of the present subsection is a presentation of $K\cX^{G,K}(X,Z)$  and
the sequence  
 \eqref{fsfdkijvoirjovwevewvervwv12}  in terms of $C^{*}$-algebras. This interpretation will be used for application to the spectral theory of differential operators in \cref{erguiqrgeqfweqef}.

We assume that $G$ is countable. 
We furthermore assume that the $G$-bornological coarse space  $X$ is very proper  \cite[Def.\ 3.7]{indexclass}. 
Very properness  is a technical condition which is e.g.\ satisfied for  $G$-bornological coarse space represented by complete Riemannian manifolds with a proper  isometric $G$-action. 
It ensures by  \cite[Prop. 4.2]{indexclass} the existence of  ample equivariant $X$-controlled Hilbert space $(H,\phi)$ \cite[Def.\ 4.1]{indexclass}.
%
Here $H$ is a complex Hilbert space  with $G$-action and  $\phi$ is an invariant, finitely additive projection-valued measure on $X$ defined on all subsets. For any such equivariant $X$-controlled Hilbert space
 one can define the {Roe algebra}
$C(X,H,\phi)$ {(see Definitions 2.3, 3.6 \& 3.8 of \cite{indexclass})}.

By \cite[Thm. 6.1]{indexclass} we have a canonical (up to equivalence) morphism of spectra \begin{equation}\label{bsgfokwtpobsgbsdfbdfsbfdsb}
K(C(X,H,\phi))\stackrel{\kappa_{(X,H,\phi)}}{\simeq} K\cX^{G}(X)\ .
\end{equation}

  Note that {the assumption that $X$ is very proper as a $G$-bornological coarse space implies that $\Res^{G}_{K}(X)$ is a very proper $K$-bornological coarse space}. If we assume that $Z$ is a {nice subset} of $\Res^{G}_{K}(X)$ for some $K$-invariant  entourage of $X$ \cite[Def.\ 8.2]{indexclass}, then
$Z$ is a very proper $K$-bornological coarse space \cite[Prop. 8.3]{indexclass}.

We consider the subspace $H_{Z}:=\phi(Z)(H)$ of $H$,
and we  let $\phi_{Z}$ denote the  restriction of  the projection-valued measure $\phi$ to subsets of $Z$ and $H_{Z}$. Then $(H_{Z},\phi_{Z})$ is a $K$-equivariant $Z$-controlled Hilbert space. It is easy to check that it is ample.

{The {localized Roe algebra} is the subalgebra} $C(\partial Z,H_{Z},\phi_{Z})$ of $C(Z,H_{Z},\phi_{Z})$ generated by operators supported on the members of the big family $\partial Z$ \eqref{feorihioweregw}. It is a closed ideal \cite[Lem. 7.57]{buen}.  We define a linear map
$$q:C(X,H,\phi)\to C(Z,H_{Z},\phi_{Z})\ , \quad q(A):=\phi(Z)A\phi(Z)\ .$$
{This map is $*$-preserving, but} in general $q$ is not a homomorphism of $C^{*}$-algebras.

 \begin{ddd}
We define the $C^{*}$-algebra $C^{G,K}(X,Z)$ as the sub-$C^{*}$-algebra of $C(Z,H_{Z},\phi_{Z})$ generated by 
$C(\partial Z,H_{Z},\phi_{Z})$ and $\im(q) $.
\end{ddd}

{Recall that for an entourage $U$ of $X$, a subset $B$  of $X$ is called $U$-bounded, if $B\times B\subseteq U$ \cite[Def.~2.15]{buen}. }
 
\begin{ddd}\label{9wtgiohwgregwerg}
We call $Z$ {wide} if for every entourage $U$ of $X$ and each $U$-bounded subset $Y$ of $X$, there exists $g$ in $G$ such that $gY\subseteq Z\setminus U[X\setminus Z]$.
\end{ddd}

\begin{theorem}\label{qwefihfuiewqfeefewfewfeqwfqfeceq} If  $Z$ is wide, then
we have a short exact sequence of $C^{*}$-algebras
 \begin{equation}\label{qewfoifjoqwefqwefqewf}
0\to C(\partial Z,H_{Z},\phi_{Z})\stackrel{\iota}{\to} C^{G,K}(X,Z)\stackrel{\sigma}{\to} C(X,H,\phi)\to 0 \ ,
\end{equation}
where the map $\iota$ is the natural inclusion. 
\end{theorem}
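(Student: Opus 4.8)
The plan is to produce the short exact sequence by identifying the three algebras concretely as operator algebras on the controlled Hilbert spaces $(H,\phi)$ and $(H_Z,\phi_Z)$, and to check exactness at each spot. First I would observe that $\sigma$ should be defined as the map that, given $A \in C^{G,K}(X,Z)$ (an operator on $H_Z = \phi(Z)(H)$), produces an operator on $H$; the natural candidate is to use the ampleness of $(H,\phi)$ together with the fact that $q(A) = \phi(Z)A\phi(Z)$ already lands in $C(Z,H_Z,\phi_Z)$, so $\sigma$ must run the other way, roughly ``$\sigma$ is a left inverse to a lift of $q$.'' Concretely I expect one defines $\sigma$ on the generators by declaring $\sigma(\iota(B)) = 0$ for $B \in C(\partial Z, H_Z,\phi_Z)$ and $\sigma(q(A)) = A$ for $A \in C(X,H,\phi)$, and then shows this is well-defined, $*$-preserving, multiplicative, and bounded; the definition of $C^{G,K}(X,Z)$ as the $C^*$-algebra generated by these two pieces is precisely tailored so that such a $\sigma$ exists once we verify it respects the relations.

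The key steps, in order, are: (1) show $\iota$ is injective — this is immediate since $C(\partial Z, H_Z,\phi_Z)$ is a closed ideal in $C(Z,H_Z,\phi_Z)$ by \cite[Lem.\ 7.57]{buen} and hence sits inside $C^{G,K}(X,Z)$; (2) show $\sigma$ is well-defined and a $*$-homomorphism — the crux here is that although $q$ itself is not multiplicative, the failure $q(A)q(A') - q(AA') = \phi(Z)A\phi(X\setminus Z)A'\phi(Z)$ is supported near $\partial Z$, so it lies in $C(\partial Z,H_Z,\phi_Z) = \ker(\sigma)$, making $\sigma$ multiplicative on the generated algebra; (3) show $\sigma$ is surjective — this is where the \emph{wideness} hypothesis on $Z$ enters: given any generator of $C(X,H,\phi)$ controlled by an entourage $U$ and supported on a $U$-bounded set $Y$, wideness produces $g \in G$ with $gY \subseteq Z \setminus U[X\setminus Z]$, and one uses the $G$-action on $(H,\phi)$ to translate a corresponding operator into $Z$, apply $q$, and translate back, checking that $q$ is ``locally isometric'' far from the boundary so that the translated operator has the right image; (4) show $\ker(\sigma) = \im(\iota)$ — the inclusion $\supseteq$ is by construction, and for $\subseteq$ one argues that if $\sigma(C) = 0$ then $C$, being a limit of products of generators, differs from a genuine localized-Roe element by something in the ideal, using again that the multiplicativity defect is localized near $\partial Z$.

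The main obstacle I expect is step (3), surjectivity of $\sigma$, and more precisely the passage from ``wideness gives a single translate $gY \subseteq Z \setminus U[X\setminus Z]$'' to ``every matrix coefficient / generating operator of $C(X,H,\phi)$ is hit.'' One has to be careful that the Roe algebra $C(X,H,\phi)$ is generated (as a $C^*$-algebra) by operators that are finite-propagation and locally compact, decompose such an operator via a partition of $X$ into $U$-bounded pieces, translate each piece into the interior region $Z \setminus U[X\setminus Z]$ where $q$ acts isometrically and $G$-equivariantly, and then reassemble — all while keeping uniform control on norms and propagation so that the resulting series converges in $C^{G,K}(X,Z)$. A secondary subtlety is checking that $C^{G,K}(X,Z)$, defined as a generated $C^*$-algebra, actually contains these reassembled operators and not merely their finite approximations; this should follow from closedness of the generated algebra together with norm estimates, but it is the kind of point where the \emph{asymptotically $G$-invariant} description of $C^{G,K}(X,Z)$ in the introduction must be reconciled with the generators-and-relations definition used here. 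Steps (1), (2), (4) I expect to be routine given the support-near-$\partial Z$ observations, which are standard manipulations with propagation of operators on controlled Hilbert spaces.
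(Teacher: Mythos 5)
Your skeleton (the boundary algebra is a closed ideal, the multiplicativity defect $q(A)q(A')-q(AA')$ is supported near $\partial Z$ and hence lies in that ideal, and $\sigma$ should kill the ideal and invert $q$) matches the paper. But there is a genuine gap: you have put the wideness hypothesis in the wrong place, and the step you call routine is exactly the one that needs it. Declaring $\sigma(q(A))=A$ and $\sigma(\iota(B))=0$ is well-defined only if no nonzero $A\in C(X,H,\phi)$ has $q(A)=\phi(Z)A\phi(Z)$ lying in $C(\partial Z,H_Z,\phi_Z)$ — equivalently, only if the induced $*$-homomorphism $\bar q\colon C(X,H,\phi)\to C^{G,K}(X,Z)/C(\partial Z,H_Z,\phi_Z)$ is \emph{injective}. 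This can genuinely fail without wideness (e.g.\ if $Z$ stays within bounded distance of its boundary, every $q(A)$ lands in the ideal), so it is not a formal consequence of ``the definition being tailored.'' The paper proves injectivity of $\bar q$ using wideness via a norm estimate: given $A$ with $q(A)$ in the ideal, pick a $U$-bounded $Y$ with $\|\phi(Y)A\phi(Y)\|\ge\tfrac12\|A\|$ while $\|\phi(Z\setminus U[X\setminus Z])A\phi(Z\setminus U[X\setminus Z])\|<\epsilon$, translate $Y$ into $Z\setminus U[X\setminus Z]$ by some $g$, and use $G$-invariance of $A$ and equivariance of $\phi$ to force $\|A\|\le 2\epsilon$.

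Correspondingly, your step (3) is both misattributed and overcomplicated: once $\bar q$ is known to be an injective $*$-homomorphism, it is isometric, so its (by construction dense) range is all of the quotient, and $\sigma:=\bar q^{-1}\circ(\text{quotient map})$ is automatically surjective with kernel exactly $C(\partial Z,H_Z,\phi_Z)$. No partition of $X$ into $U$-bounded pieces, no translation-and-reassembly of generators, and no convergence control is needed; the ``decompose, translate each piece, reassemble'' argument you sketch is not the mechanism and would be hard to make work (wideness only provides one translate per bounded set, with no compatibility between the translates of different pieces). Your steps (1) and (2) are fine and agree with the paper.
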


\begin{proof}
Since $C(\partial Z,H_{Z},\phi_{Z})$ is a closed ideal in $C(Z,H_{Z},\phi_{Z})$ it is also a closed ideal in $C^{G,K}(X,Z)$.  

{We need to construct the map $\sigma$.} As in the proof of \cite[Lemma 7.58]{buen}, {one shows}  that   for $A, A^{\prime}$ in $ C(X,H,\phi)$ we have 
\begin{equation} \label{ContainedInIdeal}
q(A) q(A^{\prime})-  q(AA^{\prime})\in C(\partial Z,H_{Z},\phi_{Z})\ .
\end{equation}
Since $q$ is $*$-preserving, it then follows that $C(\partial Z,H_{Z},\phi_{Z})+\im(q)$ is a $*$-subalgebra of $C(Z,H_{Z},\phi_{Z})$, and that the composition
 \begin{equation*}
 \bar q:C(X,H,\phi)\stackrel{q}{\to} C^{G,K}(X,Z) \to C^{G,K}(X,Z)/C(\partial Z,H_{Z},\phi_{Z})
 \end{equation*}
is a homomorphism of $C^{*}$-algebras. 
We claim that $\bar q$ is injective. 
Assuming the claim, since $\bar q$ has dense range by construction, it is an isomorphism. Hence we can define $\sigma$ as the composition
\begin{equation*}
\sigma : C^{G,K}(X,Z) \to C^{G,K}(X,Z)/C(\partial Z,H_{Z},\phi_{Z})\stackrel{\bar q^{-1}}{\to} C(X,H,\phi)
\end{equation*}
and obtain the desired exact sequence \eqref{qewfoifjoqwefqwefqewf}.

We now show the claim. 
Let $A$ be in $C(X,H,\phi)$  and assume that $\bar q(A)=0$, i.e., that 
\begin{equation*}
q(A) = \phi(Z)A \phi(Z)\in C(\partial Z,H_{Z},\phi_{Z})\ .
\end{equation*}
Then
for every $\epsilon$ in $(0,\infty)$ we can then choose 
 an entourage $U$ and a $U$-bounded subset $Y$ of $X$ such that the following two inequalities are satisfied: $$\|\phi(Y)A\phi(Y)\|\ge \frac{1}{2}\|A\| \ , \quad \|\phi(Z\setminus U[X\setminus Z]) A \phi(Z\setminus U[X\setminus Z])\|<\epsilon\ .$$   Since $Z$ is wide we can choose an element $g$ in $G$ such that $gY\subseteq Z\setminus U[X\setminus Z]$. Then using  the equality $$
\phi(Z\setminus U[X\setminus Z])\phi(gY)=\phi((Z\setminus U[X\setminus Z])\cap gY)=\phi(gY)\ ,$$ the equivariance $\phi(Y)=g\phi(gY)g^{-1} $ of $\phi$ and the equivariance $ g^{-1}Ag=A$ of $A$ we get
\begin{multline}\phi(Y)A\phi(Y)= g\phi(gY)g^{-1}Ag\phi(gY)g^{-1}
=g\phi(gY) A \phi(gY)g^{-1}\\=g\phi(gY) \phi(Z\setminus U[X\setminus Z])A \phi(Z\setminus U[X\setminus Z])\phi(gY)g^{-1}\end{multline}
Hence $$\frac{1}{2}\|A\|\le \|\phi(Y)A\phi(Y)\| \le \epsilon\ .$$
Since $\epsilon$ is arbitrary this implies that $A=0$.
\end{proof}

The sequence of $C^{*}$-algebras \eqref{qewfoifjoqwefqwefqewf} induces
a fibre sequence of $K$-theory spectra
 \begin{equation}\label{vgqvqwecqwecwec}
 \begin{tikzcd}[column sep=0.5cm]       
\cdots \arrow{r}  \ar[dr, phantom, "\scriptstyle{\delta^{C^*}}", near start, bend right = 40]&
K(C(\partial Z,H_{Z},\phi_{Z})) \ar[r] &
K(C^{G,K}(X,Z)) \ar[r] \arrow[phantom, ""{coordinate, name=Z}]{d} &
K(C(X,H,\phi)) 
  \arrow[
    rounded corners,
    to path={
      -- ([xshift=2ex]\tikztostart.east)
      |- (Z) [near end]\tikztonodes
      -| ([xshift=-2ex]\tikztotarget.west)
      -- (\tikztotarget)
    }
  ]{dll} \\
 &
\Sigma K(C(\partial Z,H_{Z},\phi_{Z})) \arrow{r} &
\cdots \ .&
\end{tikzcd}
\end{equation}
The following is the main theorem of this section.

\begin{theorem}\label{eiogwegergwegergwerg}
The  fibre sequences \eqref{vgqvqwecqwecwec}
and \eqref{fsfdkijvoirjovwevewvervwv12}
are canonically equivalent.
\end{theorem}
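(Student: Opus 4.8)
The plan is to realise both sides of the claimed equivalence as the fibre sequence of a single map and then to match those two maps. By \cref{wegoijwegerwregwg} the homotopy-theoretic sequence \eqref{fsfdkijvoirjovwevewvervwv12} is, up to canonical equivalence, the fibre sequence of the obstruction morphism $r=\delta^{MV}\circ c^{G}_{K}\colon K\cX^{G}(X)\to\Sigma K\cX^{K}(\partial Z)$, where $\delta^{MV}$ denotes the map $\delta$ of \eqref{FibreSequenceZXZ}, the Mayer--Vietoris boundary of the $K$-invariant complementary pair $(Z,\{X\setminus Z\})$ on $\Res^{G}_{K}(X)$; and \eqref{vgqvqwecqwecwec} is the $K$-theory fibre sequence of the extension \eqref{qewfoifjoqwefqwefqewf}, hence the fibre sequence of its boundary map $\delta^{C^{*}}\colon K(C(X,H,\phi))\to\Sigma K(C(\partial Z,H_{Z},\phi_{Z}))$. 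It therefore suffices to produce a commuting square of spectra whose horizontal arrows are $\delta^{C^{*}}$ and $r$, whose left vertical arrow is the comparison equivalence $\kappa_{(X,H,\phi)}$ of \eqref{bsgfokwtpobsgbsdfbdfsbfdsb}, and whose right vertical arrow is the suspension of a comparison equivalence $\mu\colon K(C(\partial Z,H_{Z},\phi_{Z}))\simeq K\cX^{K}(\partial Z)$: passing to fibres then produces the asserted equivalence, which is canonical because all of its ingredients are.

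To obtain $\mu$, note that $\Res^{G}_{K}(X)$ is a very proper $K$-bornological coarse space for which $(H,\phi)$ remains ample, so \cite[Thm.~6.1]{indexclass} gives comparison equivalences $K(C(\Res^{G}_{K}X,H,\phi))\simeq K\cX^{K}(\Res^{G}_{K}(X))$ and $K(C(Z,H_{Z},\phi_{Z}))\simeq K\cX^{K}(Z)$. Writing the localised Roe algebra $C(\partial Z,H_{Z},\phi_{Z})$ as the closure of the increasing union of the Roe algebras of the members $Z\cap U[X\setminus Z]$ of $\partial Z$, applying continuity of $K$-theory, and passing the member-wise comparison equivalences of \cite[Thm.~6.1]{indexclass} through the colimit that defines $K\cX^{K}(\partial Z)$, one obtains $\mu$; the ampleness and properness of the members needed here are handled as in \cite{indexclass}.

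The heart of the matter is the commutativity of the square. On the homotopy side $r=\delta^{MV}\circ c^{G}_{K}$, and under the comparison equivalences the transfer $c^{G}_{K}$ is the map on $K$-theory induced by the inclusion $\iota_{X}\colon C(X,H,\phi)\hookrightarrow C(\Res^{G}_{K}X,H,\phi)$ of $G$-invariant operators into $K$-invariant operators (this is how the transfer, and the induction equivalence, interact with the Roe-algebraic model; see \cite{coarsek}), while $\delta^{MV}$ becomes the connecting map of the Roe-algebraic Mayer--Vietoris square for $(Z,\{X\setminus Z\})$, which is $K$-theoretically cartesian by the excision property underlying \cite[Thm.~6.1]{indexclass} (compare \cite[Lem.~7.57, Lem.~7.58]{buen}). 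On the $C^{*}$ side, the extension \eqref{qewfoifjoqwefqwefqewf} maps, via the inclusion $C^{G,K}(X,Z)\hookrightarrow C(Z,H_{Z},\phi_{Z})$ and the injection $\bar q$ from the proof of \cref{qwefihfuiewqfeefewfewfeqwfqfeceq}, to the extension $0\to C(\partial Z,H_{Z},\phi_{Z})\to C(Z,H_{Z},\phi_{Z})\to C(Z,H_{Z},\phi_{Z})/C(\partial Z,H_{Z},\phi_{Z})\to 0$, so by naturality of the $K$-theory boundary map, $\delta^{C^{*}}$ equals the composite of $K(\bar q)$ with the connecting map $\delta_{Z}$ of the latter extension. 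Since $\bar q=\bar q_{K}\circ\iota_{X}$, where $\bar q_{K}\colon C(\Res^{G}_{K}X,H,\phi)\to C(Z,H_{Z},\phi_{Z})/C(\partial Z,H_{Z},\phi_{Z})$ is the $\ast$-homomorphism $A\mapsto\overline{\phi(Z)A\phi(Z)}$ (well defined and multiplicative by the argument behind \eqref{ContainedInIdeal}), it remains only to identify $\delta_{Z}\circ K(\bar q_{K})$ with the connecting map of the Mayer--Vietoris square. This I would do by a short diagram chase: $\bar q_{K}$ together with the inclusions exhibits a morphism from the Roe-algebraic Mayer--Vietoris square (with lower-left corner $C(\{X\setminus Z\},H,\phi)$ and lower-right corner $C(\Res^{G}_{K}X,H,\phi)$) to the cartesian square defining $\delta_{Z}$, whence the two connecting maps agree. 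Composing everything gives $\delta^{C^{*}}\simeq\delta^{MV}\circ c^{G}_{K}=r$ under the comparison equivalences, which is the desired commuting square.

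The genuine obstacle lies in carrying out the third step \emph{coherently}: propagating the comparison equivalence of \cite[Thm.~6.1]{indexclass} through the transfer and through the Mayer--Vietoris construction so as to obtain an honest commuting square of spectra, rather than merely an isomorphism of the associated long exact sequences of homotopy groups. The functoriality of the homotopy-theoretic construction recorded in \cref{twiojwtohwthtwhtgwerg}, and the naturality built into the comparison of \cite{indexclass}, are the tools that should make this bookkeeping go through; the purely $C^{*}$-algebraic input is local near $\partial Z$ and is already contained in the cited lemmas of \cite{buen} (and, in the case of trivial $K$, underlies the comparison of \cite{LudewigThiangCobordism}).
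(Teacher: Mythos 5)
Your proposal is correct and follows essentially the same route as the paper: reduce to the commutativity of the square relating $\delta^{C^{*}}$ and $r$ under the comparison equivalences of \cite[Thm.~6.1]{indexclass}, factor $\delta^{C^{*}}$ through the inclusion of $G$-invariant into $K$-invariant operators (matching the transfer via \cite[Rem.~10.6]{coarsek}) and through the compression $\ast$-homomorphism to $C(Z,H_{Z},\phi_{Z})/C(\partial Z,H_{Z},\phi_{Z})$, and identify the resulting connecting map with the Mayer--Vietoris boundary by naturality applied to the morphism of extensions. This is precisely the paper's argument via the diagrams \eqref{vewvewrvervevervwevervrev}, \eqref{asdvkjnkqfv} and \eqref{qwefewfqwefqewfqewf}, including the coherence bookkeeping you flag at the end.
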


\begin{proof}
We use the comparison maps $\kappa_{\bullet}$ as in \eqref{bsgfokwtpobsgbsdfbdfsbfdsb} and consider the following diagram:
\begin{equation}\label{eoirgjowergregwergerg}
\xymatrix{K(C^{G,K}(X,H,\phi))\ar[r]^{\sigma_{*}}\ar@{..>}[d]_{\simeq}&K(C(X,H,\phi))\ar[r]^{\delta^{C^{*}}}\ar[d]_{\simeq}^{\kappa_{(X,H,\phi)}}&\Sigma K(C(\partial Z,H_{Z},\phi_{Z}))\ar[d]_{\simeq}^{\kappa_{(\partial Z,H_{Z},\phi_{Z})}}\\K\cX^{G,K}(X,Z)\ar[r]^{s}&K\cX^{G}(X)\ar[r]^{r }&\Sigma K\cX^{K}(\partial Z)}\ .
\end{equation}
It suffices to show that the right square commutes. Then we use  the universal properties of the fibre sequences in order
to  obtain the dotted arrow  and the fact that it is an equivalence.

We expand the right square as follows:
\begin{equation}\label{vewvewrvervevervwevervrev}
\xymatrix{K(C(X,H,\phi))\ar[r]^{ K(i)}\ar@/^1cm/[rr]^{\delta^{C^{*}}}\ar[d]_{\simeq}^{\kappa_{(X,H,\phi)}} & K(C(\Res^{G}_{K}(X,H,\phi)))\ar[d]_{\simeq}^{\kappa_{\Res^{G}_{K}(X,H,\phi)}}\ar[r]^{\delta^{MV}} & \Sigma K(C(\partial Z,H_{Z},\phi_{Z}))\ar[d]_{\simeq}^{\kappa_{(\partial Z,H_{Z},\phi_{Z})}} \\
K\cX^{G}(X)\ar[r]^{{c^{G}_{K}}}  
 & K\cX^{K}(\Res^{G}_{K}(X))\ar[r]^{\delta } & {\Sigma}K\cX^{K}(\partial Z)}
\end{equation} 
 
In order to see that the right square and the upper triangle commute, we consider 
the following morphisms of short exact sequences of $C^{*}$-algebras \begin{equation}\label{asdvkjnkqfv}
\begin{tikzcd}[column sep=0.3cm] 
0\ar[r] &C(\partial Z,H_{Z},\phi_{Z}) \ar[d, equal] \ar[r, "\iota"] & C^{G,K}(X,Z)\ar[d] \ar[r, "\sigma"] & \ar[ddl, "i",dotted, near end, bend left=5]C(X,H,\phi) \ar[d, "q"] \ar[r] & 0\\
0\ar[r ]& C(\partial Z,H_{Z},\phi_{Z}) \ar[d] \ar[r] & C( Z,H_{Z},\phi_{Z})  \ar[r, "p", near start] \ar[d]& \frac{C( Z,H_{Z},\phi_{Z})}{C(\partial Z,H_{Z},\phi_{Z})} \ar[r] \ar[d, "b", "\cong"'] &0\\
0\ar[r]&C(\{X\setminus Z\},H_{X\setminus Z},\phi_{X\setminus Z}) \ar[r]& C(\Res^{G}_{K}(X,H,\phi)) \ar[r, "a"] &\frac{ C(\Res^{G}_{K}(X,H,\phi)) }{C(\{X\setminus Z\},H_{X\setminus Z},\phi_{X\setminus Z})}\ar[r]&0
\end{tikzcd}
\end{equation}
 The vertical maps are induced by the natural inclusions.
 
Applying the $K$-theory functor for $C^{*}$-algebras to this diagram {and extending to the right}, we get a morphism of  fibre sequences
 \begin{equation}\label{qwefewfqwefqewfqewf}
 \begin{tikzcd}[column sep=0.5cm] 
 K(C^{G, K}(X, Z) \ar[d] \ar[r, "K(\sigma)"] &K( C(X, H, \phi) )\ar[r, "\delta^{C^{*}}"] \ar[ddl, dotted, bend left=5, near start, "K(i)"'] \ar[d, "K(q)"] &\Sigma K(C(\partial Z,H_{Z},\phi_{Z}))\ar[d, equal] \\
K(C(Z,H_{Z},\phi_{Z})\ar[d]\ar[r, "K(p)"] &K( \frac{C( Z,H_{Z},\phi_{Z})}{C(\partial Z,H_{Z},\phi_{Z})})\ar[r, "\delta^{C^{*}\prime}"] \ar[d, "K(b)", "\simeq"'] &\Sigma K(C(\partial Z,H_{Z},\phi_{Z}))\ar[d] \\
K( C(\Res^{G}_{K}(X,H,\phi)))\ar[r, "K(a)"] \ar[rru, dotted, "\delta^{MV}", near start, bend left=5] & K\bigl(\frac{ C(\Res^{G}_{K}(X,H,\phi))}{C(\{X\setminus Z\},H_{X\setminus Z},\phi_{X\setminus Z})}\bigr)\ar[r, "\delta^{C^{*}\prime\prime}"] & \Sigma K(C(\{X\setminus Z\},H_{X\setminus Z},\phi_{X\setminus Z}) ) \ .
\end{tikzcd}
\end{equation}
By definition, the Mayer-Vietoris boundary map for the complementary pair $(Z, \{X \setminus Z\})$ on $X$ is  given by  
\begin{equation}\label{ervwrekpwervewveverv}
\delta^{MV}:=\delta^{C^{*}\prime}\circ {K(b)^{-1}}\circ K(a):K(C(\Res^{G}_{K}(X,H,\phi)))\to \Sigma K(C(\partial Z,H_{Z},\phi_{Z}))\ .
\end{equation} 
{Similarly, the boundary map $\delta$ is obtained} from the diagram
 \begin{equation}\label{qwefewfqwefqewfqewf1}
\begin{tikzcd} K\cX^{K}(Z )\ar[d]\ar[r] &\frac{K\cX^{K}( Z )}{K\cX^{H}( \partial Z)}\ar[r, "\delta^{\prime}"] \ar[d, "{b^\prime}"', "\simeq"] & \Sigma K\cX^{K}(\partial Z )\ar[d] \\
K\cX^{K}( \Res^{G}_{K}(X ))\ar[r, "a'"] \ar[rru, dotted, "\delta", near start, bend left=5]  & \frac{K\cX^{K}( \Res^{G}_{K}(X)) }{K\cX^{K}(\{X\setminus Z\}) } \ar[r, "\delta^{\prime\prime}"] &\Sigma K\cX^{K}(\{X\setminus Z\} )
\end{tikzcd}
\end{equation}
as $$\delta:=\delta'\circ (b^\prime)^{-1}\circ a':K\cX^{K}( \Res^{G}_{K}(X ))\to {\Sigma}K\cX^{H}( \partial Z)\ .$$
Here we write cofibres as quotients.
In view of  \cite[Thm. 6.1]{indexclass} the transformation $\kappa_{\bullet}$ induces a morphism from the {lower two rows of} the diagram \eqref{qwefewfqwefqewfqewf}
to the diagram \eqref{qwefewfqwefqewfqewf1}.  {Now the right square of \eqref{vewvewrvervevervwevervrev} is contained in the commutative cube obtained this way and hence commutative}. 
  
 The boundary map $\delta^{C^{*}}$ is induced by the first sequence in \eqref{asdvkjnkqfv}.
The naturality of the boundary maps for the map from the first to the second sequence  in \eqref{asdvkjnkqfv} and the existence of the dotted lift therefore imply that the upper triangle in \eqref{vewvewrvervevervwevervrev} commutes. 

Finally, the  left  square in \eqref{vewvewrvervevervwevervrev} commutes in view of   \cite[Rem. 10.6]{coarsek}.
   \end{proof}

 \section{Operators in Roe algebras}

  Assume that $A$ is a selfadjoint operator in some $C^{*}$-algebra $\cA$  with spectrum $\sigma(A)$, and let  $I$  be an interval in $\R$. 

\begin{ddd}\label{weroigjoergergegergwegergg}
We say that $I$ is a spectral interval of $A$ if    $\partial I\cap \sigma(A)=\emptyset$.
\end{ddd}

If $I$ is a spectral interval for $A$, then the spectral projection $E_{A}(I)$ also belongs to $\cA$, where $E_A$ is the spectral measure of $A$
{and hence defines}  
a class $[E_{A}(I)]$ in $K_{0}(\cA)$.
The point here is that under the assumption of \cref{weroigjoergergegergwegergg}, we can write $E_A(I)$ as a continuous function of $A$ instead of using the characteristic function.

 As in \cref{thrertherhertheth} we consider a countable group $G$ and  let $X$ be a very proper $G$-bornological coarse space.
  We choose  an   $X$-controlled ample Hilbert space $(H,\phi)$. We furthermore consider a subgroup $K$ and a wide $K$-invariant subset $Z$
  of $X$ (see \cref{9wtgiohwgregwerg}).
   
     We consider operators $A$ in $C(X,H,\phi)$ and $B$ in $C(Z,H_{Z},\phi_{Z})$. In the following we use the maps {$$p:C(Z,H_{Z},\phi_{Z})\to \frac{C(Z,H_{U},\phi_{Z})}{C(\partial Z, H_{Z},\phi_{Z})}\ , \quad  q:C(X,H,\phi)\to  \frac{C(Z,H_{U},\phi_{Z})}{C(\partial Z, H_{Z},\phi_{Z})}$$ from}   the diagram \eqref{asdvkjnkqfv}.
     
  \begin{ddd}
  We say that $A$ and $B$ are affiliated if $q(A)=p(B)$.
    \end{ddd}
 
 Using $\kappa_{(X,H,\phi)}$ in  \eqref{bsgfokwtpobsgbsdfbdfsbfdsb} we consider $[E_{A}(I)]$ as a class in $K\cX^{G}(X)$.
 {In order to shorten the notation we will drop the identification \eqref{bsgfokwtpobsgbsdfbdfsbfdsb} from the notation. Recall the  obstruction morphism  $r:K\cX^{G}(X)\to K\cX^{K}(\partial Z)$ from \eqref{fsfdkijvoirjovwevewvervwv12}.}
\begin{prop}\label{ajfiovafdsvavdsvadv}
Assume:
\begin{enumerate}
\item $A$ and $B$ are affiliated.
\item $I$ is a spectral inverval for {both} $A$ and $B$.
 \end{enumerate}
Then $r ([E_{A}(I)])=0$.
\end{prop}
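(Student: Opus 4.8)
The plan is to use \cref{eiogwegergwegergwerg}, which identifies the fibre sequence \eqref{fsfdkijvoirjovwevewvervwv12} with the $K$-theory fibre sequence \eqref{vgqvqwecqwecwec} coming from the short exact sequence \eqref{qewfoifjoqwefqwefqewf} of $C^{*}$-algebras. Under this identification, the obstruction morphism $r$ corresponds (up to the comparison equivalences $\kappa_{\bullet}$) to the boundary map $\delta^{C^{*}}$ of \eqref{qewfoifjoqwefqwefqewf}. Hence it suffices to show that $\delta^{C^{*}}([E_A(I)]) = 0$ in $K_{*-1}(C(\partial Z, H_Z, \phi_Z))$, which by the standard description of the connecting map in $K$-theory amounts to exhibiting a projection in $C^{G,K}(X,Z)$ that maps to $E_A(I) \in C(X,H,\phi)$ under $\sigma$, since $\sigma$ is surjective and a $K$-theory class of a projection in the total algebra that lifts to a projection in the extension is killed by $\delta^{C^{*}}$.

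The candidate lift will be built from $E_B(I)$. First I would use that $I$ is a spectral interval for $B$ (hypothesis (2)) to write $E_B(I) = f(B)$ for a continuous function $f$ that equals $1$ on a neighbourhood of $I \cap \sigma(B)$ and $0$ on a neighbourhood of the complement of $I$ in $\sigma(B)$ --- this is the point emphasized just after \cref{weroigjoergergegergwegergg}, and it guarantees $E_B(I) \in C(Z, H_Z, \phi_Z)$ is an honest projection and a continuous function of $B$. Then I claim $E_B(I) \in C^{G,K}(X,Z)$ and $\sigma(E_B(I)) = E_A(I)$. For the second claim: affiliation gives $q(A) = p(B)$ in the quotient $C(Z,H_Z,\phi_Z)/C(\partial Z, H_Z,\phi_Z)$, and since $\bar q$ from the proof of \cref{qwefihfuiewqfeefewfewfeqwfqfeceq} is a $C^{*}$-isomorphism with $\bar q(A) = p(B)\bmod{}$ the ideal, applying the continuous function $f$ (which commutes with $*$-homomorphisms) yields $\bar q(f(A)) = p(f(B)) = p(E_B(I))$, i.e. $p(E_B(I)) = p(q(E_A(I)))$; since $\sigma$ was defined as $\bar q^{-1} \circ p$ on $C^{G,K}(X,Z)$, this says precisely $\sigma(E_B(I)) = E_A(I)$ once we know $E_B(I) \in C^{G,K}(X,Z)$.

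For membership $E_B(I) = f(B) \in C^{G,K}(X,Z)$: since $C^{G,K}(X,Z)$ is a sub-$C^{*}$-algebra of $C(Z,H_Z,\phi_Z)$ containing the ideal $C(\partial Z, H_Z, \phi_Z)$, it is closed under the continuous functional calculus of its elements; it is generated by $C(\partial Z,\dots)$ and $\im(q)$, and $q(A) \in C^{G,K}(X,Z)$, so $f(q(A))$ lies in $C^{G,K}(X,Z)$; comparing $f(q(A))$ with $q(f(A))$ and with $f(B)$ modulo the ideal $C(\partial Z, H_Z, \phi_Z)$ using \eqref{ContainedInIdeal} and the affiliation relation shows $f(B) - f(q(A)) \in C(\partial Z, H_Z, \phi_Z) \subseteq C^{G,K}(X,Z)$, whence $f(B) = E_B(I) \in C^{G,K}(X,Z)$. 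Finally, $E_B(I)$ is a projection lifting the projection $E_A(I)$ along $\sigma$, so $\delta^{C^{*}}([E_A(I)]) = 0$ by exactness, and transporting along $\kappa_{\bullet}$ and \cref{eiogwegergwegergwerg} gives $r([E_A(I)]) = 0$.

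\textbf{Main obstacle.} The routine-looking but genuinely delicate point is the bookkeeping of which algebra each element lives in and the consistent use of the identity \eqref{ContainedInIdeal}: $q$ is only $*$-preserving, not multiplicative, so one must pass to the quotient by $C(\partial Z, H_Z,\phi_Z)$ before applying the functional calculus, and then argue that the resulting elements, which agree in the quotient, actually lie in $C^{G,K}(X,Z)$ rather than merely in $C(Z,H_Z,\phi_Z)$. The use of a \emph{continuous} $f$ with $f=1$ near $I\cap\sigma(B)$ (rather than the characteristic function $\chi_I$) is what makes $f$ compatible with the $C^{*}$-homomorphism $\bar q$ and hence makes the spectral-interval hypothesis do its work; this is the conceptual heart of the argument.
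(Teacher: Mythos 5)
Your proof is correct, and its core coincides with the paper's: both reduce, via \cref{eiogwegergwegergwerg}, to showing $\delta^{C^{*}}([E_{A}(I)])=0$, and both hinge on the functional-calculus identity $p(E_{B}(I))=E_{p(B)}(I)=E_{q(A)}(I)=q(E_{A}(I))$, which is exactly where affiliation and the spectral-interval hypothesis enter (this is \eqref{f32f23f3f23f3} in the paper). Where you diverge is in how the vanishing is then deduced. The paper factors $\delta^{C^{*}}$ as $\delta^{C^{*}\prime}\circ K(q)$ through the \emph{middle} extension of \eqref{qwefewfqwefqewfqewf}, observes that $K(q)([E_{A}(I)])=K(p)([E_{B}(I)])$ lies in the image of $K(p)$, and concludes from $\delta^{C^{*}\prime}\circ K(p)\simeq 0$; it therefore never has to locate $E_{B}(I)$ inside $C^{G,K}(X,Z)$. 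You instead lift $E_{A}(I)$ along $\sigma$ in the \emph{top} extension, which forces the additional membership statement $E_{B}(I)\in C^{G,K}(X,Z)$. Your argument for that membership (compare $f(B)$ with $f(q(A))$ modulo the ideal $C(\partial Z,H_{Z},\phi_{Z})$, using $q(A)\in\im(q)\subseteq C^{G,K}(X,Z)$ and closure of $C^{*}$-subalgebras under continuous functional calculus) is sound; the only point to watch is that the function must vanish at $0$ for the functional calculus to land in the non-unital subalgebra, but this is already forced by the proposition's standing assumption that $E_{A}(I)$ itself lies in the non-unital Roe algebra, so no new difficulty arises. What your route buys is the stronger explicit conclusion $\sigma(E_{B}(I))=E_{A}(I)$, which is precisely the statement \eqref{qewr09i0qwerqewrqr} advertised in the introduction; what the paper's route buys is economy, since the compatibility of the two extensions in \eqref{asdvkjnkqfv} lets it work entirely in the larger algebra $C(Z,H_{Z},\phi_{Z})$.
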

\begin{proof}
We have  the equality \begin{equation}\label{f32f23f3f23f3}
p(E_{B}(I))=E_{p(B)}(I)=E_{q(A)}(I)=q(E_{A}(I))\ .
\end{equation}  {Recall  the boundary map $\delta^{C^{*}\prime}$ in \eqref{qwefewfqwefqewfqewf}. 
We calculate \begin{eqnarray*}\label{qfeqoijoqiwefewfqewf}
r ([E_{A}(I)])& \stackrel{\eqref{eoirgjowergregwergerg}}{=}&   \delta^{C^{*}}([E_{A}(I)])  \stackrel{\eqref{vewvewrvervevervwevervrev}}{=} \delta^{MV}(K(i)([E_{A}(I)])) \\&
\stackrel{\eqref{ervwrekpwervewveverv}}{=} &
\delta^{C^{*}\prime}(K(b^{-1})(K(a)(K(i)([E_{A}(I)]))) )
 \stackrel{\eqref{asdvkjnkqfv}}{=}\delta^{C^{*}\prime}( K(q)([E_{A}(I)]))\\&\stackrel{\eqref{f32f23f3f23f3}}{=}  &
\delta^{C^{*}\prime} (K(p)([E_{B}(I)]))=
 0\ .
\end{eqnarray*}
The last equivalence follows from
 $\delta^{C^{*}\prime}\circ K(p)  \simeq 0$ since the upper horizontal part in \eqref{qwefewfqwefqewfqewf} is a segment of a fibre sequence.}
 \end{proof}

  \cref{ajfiovafdsvavdsvadv} gives a $K$-theoretic  condition for checking that certain elements  in the resolvent set of $A$
belong to the spectrum of $B$.
More precisely we have the following corollary.
\begin{kor}
Assume:
\begin{enumerate}
\item $A$ and $B$ are affiliated.
\item $I$ is a spectral inverval for $A$. \item 
$r ([E_{A}(I)])\not= 0$.
 \end{enumerate}
Then  $\partial I\cap \sigma(B)\not= \emptyset$.
\end{kor}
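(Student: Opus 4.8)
The plan is to obtain the corollary directly from \cref{ajfiovafdsvavdsvadv} by a contrapositive argument, so that no new analytic input is needed. First I would assume, aiming for a contradiction, that $\partial I\cap\sigma(B)=\emptyset$. By \cref{weroigjoergergegergwegergg} this is precisely the statement that $I$ is a spectral interval of $B$; in particular the spectral projection $E_{B}(I)$ is again an element of $C(Z,H_{Z},\phi_{Z})$, so that the right-hand side of the identity \eqref{f32f23f3f23f3} used in the proof of \cref{ajfiovafdsvavdsvadv} is meaningful.

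Next I would check that the two hypotheses of \cref{ajfiovafdsvavdsvadv} are now satisfied: assumption (1) of the corollary gives that $A$ and $B$ are affiliated, and combining assumption (2) with the fact just established shows that $I$ is a spectral interval for \emph{both} $A$ and $B$. Invoking \cref{ajfiovafdsvavdsvadv} then yields $r([E_{A}(I)])=0$, which contradicts assumption (3). Hence $\partial I\cap\sigma(B)\neq\emptyset$, as claimed.

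I do not expect any genuine obstacle: the corollary is a formal reformulation of \cref{ajfiovafdsvavdsvadv} as a non-triviality criterion for the spectrum of $B$, trading the conclusion ``$r([E_{A}(I)])=0$'' for the contrapositive assertion that at least one endpoint of $I$ must lie in $\sigma(B)$. The only point worth flagging explicitly is the one noted above, namely that the hypothesis $\partial I\cap\sigma(B)=\emptyset$ is exactly what is needed for $E_{B}(I)$ to be a well-defined element of the Roe algebra of $Z$ and hence for \cref{ajfiovafdsvavdsvadv} to be applicable.
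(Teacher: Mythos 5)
Your argument is correct and is exactly what the paper intends: the corollary is stated without proof as the immediate contrapositive of \cref{ajfiovafdsvavdsvadv}, which is precisely the reduction you carry out. Your remark that the hypothesis $\partial I\cap\sigma(B)=\emptyset$ is what makes $I$ a spectral interval for $B$ (so that the proposition applies) is the only point of substance, and it is handled correctly.
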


 \section{Differential operators}\label{erguiqrgeqfweqef}

Let $M$ be a complete Riemannian manifold with an isometric action of $G$. Let furthermore $D$ be an elliptic selfadjoint differential operator on $M$ acting on sections of some {equivariant} Hermitian vector bundle $E\to M$.  We will consider the following two cases simultaneously.

\begin{ass}\label{rwejgoiejgoiergwregwreg}\mbox{} \begin{enumerate}
 \item (Dirac case:) $D$ is  a first order Dirac type operator of degree $n$. The degree is implemented by a grading and the action an appropriate   Clifford algebra. We will hide both structures   from the notation, see 
\cite[Sec. 7]{indexclass} for details. 
\item\label{rojopwe} (Laplace case): $D$ is a second order  Laplace type operator  {with non-negative spectrum}.  
\end{enumerate}
 \end{ass}
   
%

We consider $M$ as a bornological coarse space with the metric coarse structure and the minimal compatible bornology.  We consider the Hilbert space $H_{0}:=L^{2}(M,E)$ with the control $\phi_{0}$ as described in \cite[Sec. 9]{indexclass}. We can assume that $(H_{0},\phi_{0})$ embeds into an ample $M$-controlled Hilbert space $(H,\phi)$.  
Using this embedding we can and will view the Roe algebras associated to $(H_{0},\phi_{0})$ as subalgebras of the corresponding Roe algebras associated to $(H,\phi)$.

We now assume that $Z$ is {a} $K$-invariant 
{codimension-zero submanifold} of $M$ with smooth boundary\footnote{We add a superscript $\infty$ in order to distinguish this boundary from the coarse boundary of $Z$ introduced above.} $\partial^{\infty} Z$. 
We further assume that $D^{\prime}$ is a selfadjoint extension of the restriction of $D$ to $Z$
determined by imposing a  local  {$K$-invariant} elliptic boundary condition at $\partial^{\infty} Z$.
In detail this means in the Dirac case that $D^{\prime}$ is the closure of the unbounded operator  $(D_{|Z},\dom(D_{|Z}))$
on $L^{2}(Z,E_{|Z})$ with domain   {the} $K$-invariant subspace $$\dom(D_{|Z}):= \{\phi\in C^{\infty}_{c}(Z,E_{|Z})| (P\phi)_{|\partial Z^{\infty}}=0\}$$  determined  by a $K$-invariant section $P$ in $C^{\infty}(Z,\Hom(E,F))$ for some auxiliary $K$-equivariant vector bundle 
$F$ on $Z$. 
Thereby $P$ must be such that
$ (D_{|Z},\dom(D_{|Z}))$ is essentially selfadjoint and (for ellipticity) such that the Lopatinski-Shapiro condition is satisfied.
The Laplace operator case is similar, except that $P$ is now allowed to be a first order differential operator.

 In the Dirac case we assume that this boundary condition is in addition compatible with the grading and the Clifford action,  while in the Laplace case we assume that $D^{\prime}$ is still non-negative. 
 
In the Laplace case the typical examples are Dirichlet and Neumann boundary conditions, while  typical examples in the Dirac case are the absolute and relative boundary conditions for the Euler operator, i.e., the Dirac operator associated to the de Rham complex with the even/odd grading. 
Note that in the Dirac case such local elliptic selfadjoint boundary conditions do not always exist (e.g. for the $Spin^{c}$-Dirac operator) in contrast to the non-local Atiyah-Patodi-Singer boundary condition \cite{MR397797}.

Let $\varphi \in C_{0}(\R)$.
\begin{prop}\label{qoijfoqefeqewfwefeqwfe}
 {We} have  $\varphi(D)\in C(M,H,\phi)$ and  
 $\varphi(D^{\prime})\in C(Z,H_{Z},\phi_{Z})$.
 \end{prop}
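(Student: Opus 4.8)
The plan is to prove both statements by a finite-propagation approximation argument, exploiting that $\varphi \in C_0(\mathbb{R})$ can be uniformly approximated by functions whose (distributional) Fourier transform is compactly supported, so that the corresponding functions of $D$ (resp.\ $D'$) have finite propagation speed. First I would recall the standard fact that since $D$ and $D'$ are elliptic and essentially selfadjoint (resp.\ selfadjoint by the chosen boundary condition) on the complete manifold $M$ (resp.\ on $Z$ with local elliptic boundary conditions), the unitary groups $e^{itD}$ and $e^{itD'}$ exist. In the Dirac case $D$ is first order so the wave operator $e^{itD}$ has finite propagation speed $|t|$; in the Laplace case one uses $D = \Delta$ and writes $\varphi(D)$ via $e^{it\sqrt{\Delta}}$ (legitimate because $D$ has non-negative spectrum, so $\sqrt{D}$ is a well-defined selfadjoint operator), and $e^{it\sqrt{\Delta}}$ again has finite propagation speed $|t|$. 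The boundary condition at $\partial^\infty Z$ being local guarantees the same finite propagation statement for $D'$ on $Z$.

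The key steps, in order, are: (1) Choose a sequence $\psi_k \in C_0(\mathbb{R})$ with $\widehat{\psi_k}$ compactly supported and $\psi_k \to \varphi$ uniformly — concretely, convolve $\varphi$ with an approximate identity whose Fourier transform is a bump function, or use that such functions are dense in $C_0(\mathbb{R})$ by Stone--Weierstrass-type arguments (in the Laplace case, work with even functions so that $\psi_k(\sqrt{\cdot})$ is well-defined). (2) For each $k$, use the Fourier inversion formula $\psi_k(D) = \frac{1}{2\pi}\int \widehat{\psi_k}(t)\, e^{itD}\, dt$ (and analogously with $\sqrt{D}$ in the Laplace case) together with finite propagation speed to conclude that $\psi_k(D)$ is supported in a controlled entourage of $M$ — i.e., its integral kernel is supported within distance $R_k := \sup|\operatorname{supp}\widehat{\psi_k}|$ of the diagonal. (3) Check that $\psi_k(D)$ is a locally compact operator with respect to the control $\phi_0$: this uses ellipticity of $D$, which gives that $\phi_0(B)\psi_k(D)\phi_0(B')$ is compact for bounded $B, B'$ via Rellich/elliptic regularity — the point being $\psi_k(D)$ maps into any Sobolev space locally. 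Hence $\psi_k(D) \in C(M, H_0, \phi_0) \subseteq C(M,H,\phi)$. (4) Since $\psi_k(D) \to \varphi(D)$ in operator norm by the functional calculus and the Roe algebra is norm-closed, conclude $\varphi(D) \in C(M,H,\phi)$. (5) Repeat verbatim for $D'$ on $Z$ with the control $\phi_Z$, using that the local elliptic boundary condition preserves finite propagation and local compactness, to get $\varphi(D') \in C(Z, H_Z, \phi_Z)$.

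The main obstacle I expect is step (2)--(3) in the Dirac case with a boundary: one must be careful that finite propagation speed for $e^{itD'}$ genuinely holds up to the boundary $\partial^\infty Z$, which requires the boundary condition to be \emph{local} (it is, by assumption) so that the propagation argument — usually an energy estimate or a Duhamel/unique-continuation argument for the symmetric hyperbolic system $\partial_t u = iD' u$ — is not spoiled near $\partial^\infty Z$. Relatedly, in the Laplace case one must justify passing from $e^{it\sqrt{\Delta}}$ back to $\varphi(\Delta)$: writing $\varphi(\lambda) = \psi(\sqrt{\lambda})$ for $\lambda \ge 0$ with $\psi$ even and then $\psi(\sqrt{\Delta})$ via the cosine transform $\cos(t\sqrt{\Delta})$, which has finite propagation and is the solution operator of the wave equation even with local boundary conditions. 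Everything else — the approximation, the local compactness from ellipticity, the norm-closedness of Roe algebras — is routine and can be cited from \cite[Sec.\ 9]{indexclass} and the standard theory; I would keep those parts brief and concentrate the write-up on the finite-propagation-speed input and its compatibility with the boundary condition.
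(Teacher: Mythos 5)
Your argument is exactly the paper's: the paper simply cites \cite[Prop.\ 10.5.6]{higson_roe} for the Dirac case (which is precisely your finite-propagation approximation plus local compactness plus norm-closedness of the Roe algebra), remarks that locality of the boundary condition preserves finite propagation speed for $D'$, and in the Laplace case reduces to even $\varphi$ and uses $\cos(t\sqrt{D})$. One small caution: the half-wave operator $e^{it\sqrt{\Delta}}$ does \emph{not} have finite propagation speed (only $\cos(t\sqrt{\Delta})$ and $\sin(t\sqrt{\Delta})/\sqrt{\Delta}$ do), but you correct this yourself in your final paragraph by passing to even functions and the cosine family, which is the right formulation.
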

 \begin{proof}
 For the Dirac case
 the assertion $\varphi(D)\in C(M,H_{0},\phi_{0})$ is \cite[Prop. 10.5.6]{higson_roe}.
The case of $D^{\prime}$ has the same proof taking into account that the wave equation still has finite propagation speed in view of the locality of the boundary condition. 

For the Laplace case, by positivity of $D$ and $D^{\prime}$ we can assume that $\varphi$ is even.
We then use the finite propagation speed of the wave operator family $\cos(t\sqrt{D})$ and argue similarly as in the Dirac case {(see e.g., Lemma~1.7 in \cite{LudewigThiangGapless})}.
  \end{proof}
  
\begin{prop}\label{eirogjwoegwergerwf} 
 {The} operators $\varphi(D)$ and $\varphi(D^{\prime})$ are affiliated.
\end{prop}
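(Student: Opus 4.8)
The plan is to show that $q(\varphi(D)) = p(\varphi(D'))$, i.e.\ that the images of $\varphi(D)$ and $\varphi(D')$ in the quotient $C(Z,H_{Z},\phi_{Z})/C(\partial Z,H_{Z},\phi_{Z})$ agree. By \cref{qoijfoqefeqewfwefeqwfe} both operators live in the appropriate Roe algebras, so $q(\varphi(D)) = \phi(Z)\varphi(D)\phi(Z)$ and $p(\varphi(D'))$ both make sense; affiliation then amounts to proving
\begin{equation*}
\phi(Z)\,\varphi(D)\,\phi(Z) - \varphi(D')\phi(Z) \in C(\partial Z, H_{Z},\phi_{Z})\ ,
\end{equation*}
where I use $\phi_Z(Z) = \phi(Z)$ acting as the identity of $H_Z$. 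Since $C(\partial Z,H_{Z},\phi_{Z})$ is the closed ideal generated by operators supported on the big family $\partial Z = Z \cap \{X\setminus Z\}$, and since $\varphi \in C_0(\R)$ can be uniformly approximated by Schwartz functions whose Fourier transforms are compactly supported, it suffices to treat $\varphi$ with $\hat\varphi$ supported in $[-T,T]$; the general case follows by a density/limit argument using that $q$ and $p$ are norm-contractive and the ideal is closed.

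For such $\varphi$ I would write, in the Dirac case, $\varphi(D) = \frac{1}{2\pi}\int_{-T}^{T} \hat\varphi(t)\, e^{itD}\, dt$ and similarly $\varphi(D') = \frac{1}{2\pi}\int_{-T}^{T}\hat\varphi(t)\,e^{itD'}\,dt$ (in the Laplace case replace $e^{itD}$ by $\cos(t\sqrt D)$ and use that $\varphi$ may be taken even, exactly as in the proof of \cref{qoijfoqefeqewfwefeqwfe}). The key geometric input is finite propagation speed together with the locality of the boundary condition defining $D'$: if $\psi \in C_c^\infty$ is supported in the interior of $Z$ at distance more than $T$ from $\partial^\infty Z$, then $e^{itD}\psi = e^{itD'}\psi$ for $|t|\le T$, because the two solutions of the corresponding wave equation agree on the domain of dependence, which never reaches the boundary where the two operators' domains could differ. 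Consequently $\bigl(\varphi(D) - \varphi(D')\bigr)\phi(Z \setminus U_T[X\setminus Z]) = 0$ for a suitable controlled entourage $U_T$ depending on $T$ (large enough that $U_T[X\setminus Z]$ contains the metric $T$-neighborhood of $\partial^\infty Z$ in $Z$), and by taking adjoints also $\phi(Z\setminus U_T[X\setminus Z])\bigl(\varphi(D)-\varphi(D')\bigr) = 0$. Hence $\phi(Z)\bigl(\varphi(D)-\varphi(D')\bigr)\phi(Z)$ is supported on the member $Z \cap U_{2T}[X\setminus Z]$ of the big family $\partial Z$ (using a standard entourage-combining estimate), so it lies in $C(\partial Z,H_Z,\phi_Z)$.

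It remains to reconcile $\phi(Z)\varphi(D)\phi(Z)$ with $q(\varphi(D))$ and $\varphi(D')\phi(Z) = \varphi(D')$ with $p(\varphi(D'))$: the first is literally the definition of $q$, and the second holds because $\phi(Z)$ is the identity on $H_Z$ and $p$ is the quotient map, so the difference computed above is exactly $q(\varphi(D)) - p(\varphi(D'))$ lifted to $C(Z,H_Z,\phi_Z)$ before passing to the quotient. Once this membership in $C(\partial Z,H_Z,\phi_Z)$ is established for $\hat\varphi$ compactly supported, approximating a general $\varphi \in C_0(\R)$ in sup-norm by such functions, and using $\|q(\varphi(D)) - q(\varphi_n(D))\| \le \|\varphi - \varphi_n\|_\infty$ (and similarly for $p$ and $\varphi(D')$), shows $q(\varphi(D)) - p(\varphi(D'))$ is a norm limit of elements of the closed ideal $C(\partial Z,H_Z,\phi_Z)$, hence zero in the quotient; that is the definition of affiliation.

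\textbf{Main obstacle.} I expect the finite-propagation-speed-with-boundary argument to be the crux: one must carefully verify that the locality of the elliptic boundary condition at $\partial^\infty Z$ genuinely implies the domain-of-dependence property for $e^{itD'}$ (resp.\ $\cos(t\sqrt{D'})$), so that wave solutions emanating from the interior of $Z$ coincide with those for $D$ on $M$ as long as their support has not reached $\partial^\infty Z$ — this is where the proofs of \cref{qoijfoqefeqewfwefeqwfe}, \cite[Prop.~10.5.6]{higson_roe} and \cite[Lemma~1.7]{LudewigThiangGapless} are the relevant precedents. The bookkeeping translating metric neighborhoods of $\partial^\infty Z$ into members $Z \cap U[X\setminus Z]$ of the coarse big family $\partial Z$, and the density reduction to $\hat\varphi$ compactly supported, are routine by comparison.
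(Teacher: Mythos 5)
Your proposal is correct and follows essentially the same route as the paper's proof: reduce by density to $\hat\varphi$ compactly supported, use finite propagation speed together with the locality of the boundary condition to get $\phi(Z)(\varphi(D)-\varphi(D'))\phi(Z\setminus U_R[X\setminus Z])=0$, and then write the difference $\phi(Z)\varphi(D)\phi(Z)-\varphi(D')$ as a sum of terms supported on the member $Z\cap U_R[X\setminus Z]$ of $\partial Z$. The only cosmetic difference is that you obtain two-sided support by taking adjoints, whereas the paper directly decomposes the difference into two one-sided terms, each landing in the ideal $C(\partial Z,H_Z,\phi_Z)$.
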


\begin{proof}
In the Laplace case we set $\varphi_{1}:=\varphi$, and in the Dirac case we use $\varphi_{1}(t):=\varphi(t^{2})$. 
It suffices to show the assertion for $\varphi_{1}$ in $C_{0}(\R)$ with $\hat \varphi_{1}\in C_{c}(\R)$. 
This implies the general case since these functions are dense in $C_{0}(\R)$.

 Let $U_{R}:=\{d\le R\}$ be the metric entourage of size $R$ on $M$.  
 If $\supp({\hat \varphi_{1}})\subseteq[-R,R]$, then $\phi(D)$ and $\phi(D')$ are  $U_{R}$-controlled and
   $$ \phi(Z)( \varphi(D)-\varphi(D'))\phi(Z\setminus U_{R}[X\setminus Z])=0\ .$$ It follows that 
$$\phi(Z)\varphi(D)\phi(Z)-\varphi(D')=\phi(Z)\varphi(D)\phi(Z\cap U_{R}[X\setminus Z]){-} \varphi(D') 
{\phi}(Z\cap U_{R}[X\setminus Z])\ .$$
Both summands on the right-hand side belong to $C(\partial Z,H_{Z},\phi_{Z})$.
Consequently $q(\varphi(D))=p(\varphi(D^{\prime}))$.
 \end{proof}

 Let $\sigma(D)$ denote the spectrum of $D$.
Let $\lambda$ be in $ \R\setminus \sigma(D)$.

\begin{prop} \label{ewgiowgregwergwrg}Assume that $D$ is of Laplace type.  
\begin{enumerate} 
\item \label{feuivhiqvcq}  We have
$E_{D}(-1,\lambda)\in C(M,H,\phi)$. 
 \item  \label{feuivhiqvcq1} If $r ([E_{D}(-1,\lambda)])\not=0$, then $\lambda\in \sigma(D^{\prime})$.
 \end{enumerate}
 
\end{prop}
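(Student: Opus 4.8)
The plan is to deduce both statements from the results already established, essentially by verifying that the pair $(A,B)=(\varphi(D),\varphi(D'))$ for a suitable $\varphi$ meets the hypotheses of \cref{ajfiovafdsvavdsvadv} and its corollary. For part \eqref{feuivhiqvcq}, since $D$ is of Laplace type with non-negative spectrum and $\lambda\in\R\setminus\sigma(D)$, the characteristic function $\chi_{(-1,\lambda)}$ agrees on $\sigma(D)\subseteq[0,\infty)$ with a function $\varphi\in C_0(\R)$ (choose $\varphi$ continuous, compactly supported, equal to $1$ on $[0,\lambda']$ and $0$ on $[\lambda'',\infty)$ where $\lambda'<\lambda<\lambda''$ are chosen inside the spectral gap around $\lambda$; the point $0$ causes no trouble since $-1<0$ is already to the left). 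Then $E_D(-1,\lambda)=\varphi(D)$, and \cref{qoijfoqefeqewfwefeqwfe} gives $\varphi(D)\in C(M,H,\phi)$, which is the claim; here I am using \cref{weroigjoergergegergwegergg}, i.e.\ that $(-1,\lambda)$ is a spectral interval of $D$ because $\partial(-1,\lambda)=\{-1,\lambda\}$ misses $\sigma(D)$.

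For part \eqref{feuivhiqvcq1} I would argue by contraposition. Set $B:=\varphi(D')$ with the same $\varphi$ as above. By \cref{qoijfoqefeqewfwefeqwfe}, $B\in C(Z,H_Z,\phi_Z)$, and by \cref{eirogjwoegwergerwf} the operators $\varphi(D)=E_D(-1,\lambda)$ and $B$ are affiliated, so hypothesis (1) of \cref{ajfiovafdsvavdsvadv} holds. Now suppose, for contradiction, that $\lambda\notin\sigma(D')$. Since $D'$ is non-negative, $\sigma(D')\subseteq[0,\infty)$, so also $-1\notin\sigma(D')$; hence $(-1,\lambda)$ is a spectral interval for $D'$ as well, and on $\sigma(D')$ the function $\varphi$ again agrees with $\chi_{(-1,\lambda)}$, giving $E_{D'}(-1,\lambda)=\varphi(D')=B$. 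Thus hypothesis (2) of \cref{ajfiovafdsvavdsvadv} is satisfied, and the proposition yields $r([E_D(-1,\lambda)])=0$, contradicting the assumption. Therefore $\lambda\in\sigma(D')$. (Equivalently, one can simply invoke the Corollary following \cref{ajfiovafdsvavdsvadv} with $A=\varphi(D)$, $B=\varphi(D')$, $I=(-1,\lambda)$.)

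The only genuinely delicate point is the bookkeeping around the continuous-functional-calculus representation: one must make sure the chosen $\varphi\in C_0(\R)$ simultaneously represents the spectral projection of $D$ and of $D'$, which is exactly where the spectral-gap hypothesis at $\lambda$ (and non-negativity at the left endpoint) enters, and where \cref{weroigjoergergegergwegergg} is used. Everything else is a direct citation: membership in the Roe algebras from \cref{qoijfoqefeqewfwefeqwfe}, affiliation from \cref{eirogjwoegwergerwf}, and vanishing of $r$ from \cref{ajfiovafdsvavdsvadv}. I do not expect any real obstacle beyond this; in particular no new estimates or constructions are needed.
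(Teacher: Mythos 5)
Your proof is correct and follows essentially the same route as the paper: both reduce to \cref{ajfiovafdsvavdsvadv} by representing the spectral projection via continuous functional calculus and citing \cref{qoijfoqefeqewfwefeqwfe} and \cref{eirogjwoegwergerwf}; the paper merely uses a strictly decreasing $\varphi$ together with the spectral mapping theorem, whereas you use a plateau function so that $\varphi(D)$ is itself the projection $E_{D}(-1,\lambda)$. One small slip in your final parenthetical: since $\sigma(\varphi(D))\subseteq\{0,1\}$, the interval fed to the corollary should be one isolating the eigenvalue $1$ (e.g.\ $(1/2,3/2)$), not $(-1,\lambda)$, and when verifying that $\varphi(D')=E_{D'}(-1,\lambda)$ under the contradiction hypothesis you should (harmlessly) shrink $[\lambda',\lambda'']$ so that it also avoids $\sigma(D')$.
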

\begin{proof}
The assertions are obvious if $\lambda$ is negative. In the following we  assume that $\lambda$ is non-negative.
We can choose $\varphi$ in $C_{0}(\R)$ such that  it {is} strictly monotoneously decreasing on $[-1,\infty)$. Since $D$ is positive, by the spectral mapping theorem  we have the equality $$E_{D}(-1,\lambda)=E_{\varphi(D)}((\varphi(\lambda),\varphi(-1))\ .$$ Since $\varphi(-1)$ and $\varphi(\lambda)$ are  not in the spectrum of $\varphi(D)$, the interval  $(\varphi(\lambda),\varphi(-1))$ is a spectral interval for $\varphi(D)$. Hence
$ E_{\varphi(D)}((\varphi(\lambda),\varphi(-1))\in  C(M,H,\phi)$. This implies (\ref{feuivhiqvcq}).
 
In order to show   (\ref{feuivhiqvcq1}) we assume by contradiction that $\lambda\not\in \sigma(D^{\prime})$. Then similarly as above 
$(\varphi(\lambda),\varphi(-1))$ is a spectral interval for 
 $\varphi(D^{\prime})$, too.  We conclude by  
\cref{ajfiovafdsvavdsvadv} that
$r_{X,Z}([E_{D}(-1,\lambda)])=0$ {which contradicts our Assumption (\ref{feuivhiqvcq1}).}
\end{proof}

If $D$ is  of Dirac type (of degree $n$), then  we have an index class
$\ind^{G}(D)$ in $K\cX_{n}^{G}(M)$. 
\begin{prop} \label{fergoijergoewgergewrgreg} We assume that $D$ is of Dirac type. If $D^{\prime}$ exists, then we
 have $r (\ind(D))=0$.
\end{prop}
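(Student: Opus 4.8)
The plan is to mimic the proof of \cref{ewgiowgregwregwrg}(\ref{feuivhiqvcq1}), replacing the functional-calculus argument for a spectral interval by the index-class version. Recall that in the Dirac case of degree $n$ the index class $\ind^{G}(D)$ in $K\cX^{G}_{n}(M)$ is constructed (see \cite[Sec.\ 7]{indexclass}) out of functional-calculus expressions $\chi(D)$, $\psi(D)$ for suitable functions $\chi$ (an odd ``chopping'' function with $\chi^{2}-1$ supported near $0$) and $\psi\in C_{0}(\R)$; by \cref{qoijfoqefeqewfwefeqwfe} these belong to $C(M,H,\phi)$, and the corresponding expressions $\chi(D')$, $\psi(D')$ built from the selfadjoint extension $D'$ lie in $C(Z,H_{Z},\phi_{Z})$ and assemble into an index class $\ind^{K}(D')$ in $K\cX^{K}_{n}(Z)$. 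The first step is therefore to recall this construction precisely and to observe that, just as in \cref{eirogjwoegwergerwf}, finite propagation speed for the wave operator of $D$ (resp.\ the finite-propagation solution of the wave equation for $D'$, which is available because the boundary condition is local) shows that $\chi(D)$ is affiliated to $\chi(D')$ and $\psi(D)$ to $\psi(D')$ in the sense that $q(\chi(D))=p(\chi(D'))$ and $q(\psi(D))=p(\psi(D'))$.

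The second step is to run the same computation as in the proof of \cref{ajfiovafdsvavdsvadv}, but at the level of the index class rather than a single spectral projection. Since the index class in $K\cX^{G}(M)\simeq K(C(M,H,\phi))$ is (by its very definition, e.g.\ via a Cayley-type or a idempotent/clutching formula) a functorially produced element of $K_{*}$ of the Roe algebra determined by the images of $\chi(D)$, $\psi(D)$ under $K$-theory, the affiliation statement from the first step gives
\begin{equation*}
K(q)(\ind^{G}(D)) = K(p)(\ind^{K}(D'))
\end{equation*}
in $K_{*}$ of the quotient $C(Z,H_{Z},\phi_{Z})/C(\partial Z,H_{Z},\phi_{Z})$, exactly as \eqref{f32f23f3f23f3} did for $E_{A}(I)$. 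Then the chain of equalities in the proof of \cref{ajfiovafdsvavdsvadv} — using \eqref{eoirgjowergregwergerg}, \eqref{vewvewrvervevervwevervrev}, \eqref{ervwrekpwervewveverv}, \eqref{asdvkjnkqfv}, and finally $\delta^{C^{*}\prime}\circ K(p)\simeq 0$ because the top row of \eqref{qwefewfqwefqewfqewf} is a segment of a fibre sequence — carries over verbatim with $\ind^{G}(D)$ in place of $[E_{A}(I)]$, yielding $r(\ind^{G}(D))=0$.

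The most efficient writeup is probably to isolate the common content of \cref{ajfiovafdsvavdsvadv} and this proposition as a single statement: if $a\in K\cX^{G}(M)$ and $a'\in K\cX^{K}(Z)$ (identified with the relevant $K$-groups of Roe algebras) satisfy $K(q)(a)=K(p)(a')$, then $r(a)=0$; both \cref{ajfiovafdsvavdsvadv} and the present proposition are then instances, obtained by producing the affiliated pair $\bigl(E_{A}(I),E_{B}(I)\bigr)$, respectively $\bigl(\ind^{G}(D),\ind^{K}(D')\bigr)$. I expect the main obstacle to be the first step: carefully checking that the index-class construction is natural enough that the affiliation of the generating functional-calculus elements $\chi(D),\psi(D)$ with $\chi(D'),\psi(D')$ actually propagates to an equality $K(q)(\ind^{G}(D))=K(p)(\ind^{K}(D'))$ of $K$-theory classes in the quotient — this requires knowing that $q$ (though not multiplicative) induces, after passing to the quotient by the localized Roe algebra, a $C^{*}$-homomorphism $\bar q$ (as established in \cref{qwefihfuiewqfeefewfewfeqwfqfeceq}) under which the index construction is preserved. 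Once that compatibility is pinned down, the rest is the formal diagram chase already carried out for \cref{ajfiovafdsvavdsvadv}.
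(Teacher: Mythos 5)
Your proposal follows essentially the same route as the paper: establish affiliation of the functional-calculus elements via \cref{eirogjwoegwergerwf}, deduce $q(\ind^{G}(D))=p(\ind^{K}(D'))$, and conclude by the exactness argument from the proof of \cref{ajfiovafdsvavdsvadv}. The compatibility you flag as the main obstacle is resolved in the paper simply by using the homomorphism picture of $K$-theory, in which $\ind^{G}(D)$ \emph{is} the class of the homomorphism $C_{0}(\R)\ni\varphi\mapsto\varphi(D)$, so the affiliation $q(\varphi(D))=p(\varphi(D'))$ for all $\varphi$ immediately gives the equality of classes in the quotient.
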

\begin{proof}  
The index class $\ind^{G}(D)$ is represented by the homomorphism 
$$C_{0}(\R)\ni \varphi \mapsto \varphi(D)\in C(M,H,\phi)\ ,$$
where we use the homomorphism picture of $K$-theory, see  e.g. \cite[Sec. 4.1]{MR3551834}.
 Similarly,  if $D^{\prime}$ exists, then $\ind^{K}(D^{\prime})$ in $K\cX^{K}_{n}(Z)$ is represented by 
 the homomorphism 
 $$C_{0}(\R)\ni \varphi \mapsto \varphi(D')\in C(Z,H_{Z},\phi_{Z})\ .$$
  Since
$q(\varphi(D))=p(\varphi(D'))$ by  \cref{eirogjwoegwergerwf} we conclude that
$q(\ind^{G}(D))=p(\ind^{K}(D^{\prime}))$.
The  assertion now follows from the long exact sequence associated to the pair $(Z,\partial Z)$, see also {the proof of \cref{ajfiovafdsvavdsvadv}.}
 \end{proof}

\begin{kor}
If $r (\ind^{G}(D))\not=0$, then $D^{\prime}$ does not admit a {$K$-invariant,} local, selfadjoint, elliptic boundary condition which is compatible with the Clifford action.
\end{kor}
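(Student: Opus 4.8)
The plan is to read this corollary off as the contrapositive of \cref{fergoijergoewgergewrgreg}. Recall that in the setup preceding \cref{qoijfoqefeqewfwefeqwfe} the operator $D'$ is \emph{defined} to be a selfadjoint extension of $D_{Z}$ obtained by imposing a local $K$-invariant elliptic boundary condition at $\partial^{\infty}Z$ which, in the Dirac case, is in addition required to be compatible with the grading and the Clifford action. The phrase ``$D'$ exists'' appearing in \cref{rwejgoiejgoiergwregwreg} and in \cref{fergoijergoewgergewrgreg} means precisely that such a boundary condition is available. Hence the hypothesis of the corollary — that $D'$ does \emph{not} admit a $K$-invariant, local, selfadjoint, elliptic boundary condition compatible with the Clifford action — is literally the negation of the assumption ``$D'$ exists.''

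I would then argue by contraposition. Suppose, for contradiction, that $D'$ does admit such a boundary condition. Then $D'$ exists in the sense of \cref{rwejgoiejgoiergwregwreg}, so \cref{fergoijergoewgergewrgreg} applies and yields $r(\ind^{G}(D)) = 0$. This contradicts the standing hypothesis $r(\ind^{G}(D)) \neq 0$. Therefore no such boundary condition can exist, which is the assertion.

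I expect no real obstacle here: the corollary is a purely formal consequence of \cref{fergoijergoewgergewrgreg}, whose proof in turn rests on the affiliation statement \cref{eirogjwoegwergerwf} together with \cref{ajfiovafdsvavdsvadv} and the long exact sequence of the pair $(Z,\partial Z)$. The only point deserving a sentence of care is the matching of terminology discussed in the first paragraph — identifying ``admits a $K$-invariant local selfadjoint elliptic boundary condition compatible with the Clifford action'' with ``$D'$ exists'' — and once that identification is granted, the proof is a single application of contraposition.
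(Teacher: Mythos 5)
Your proposal is correct and matches the paper's (implicit) argument exactly: the corollary is the contrapositive of \cref{fergoijergoewgergewrgreg}, with ``$D'$ exists'' meaning precisely that a $K$-invariant, local, selfadjoint, elliptic boundary condition compatible with grading and Clifford action is available. Nothing further is needed.
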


For $G$-equivariant Dirac operators $\Dirac$  we can calculate $r(\ind^{G}(\Dirac))$ as follows.
The choice of $Z$ induces an orientation and hence trivialization of the normal bundle of $\partial^{\infty} Z$ by the out-going normal. 
The Dirac operator $ \Dirac$ then naturally induces a $K$-invariant Dirac operator $\partial  \Dirac$ on $\partial^\infty Z$ which is well-defined up to zero order perturbations (if we would require a geometric product structure, then the operator would be canonical).
This restriction procedure works such that   if  $\Dirac$ is the Dirac operator on $M$ associated to a $Spin^{c}$-structure or $Spin$-structure, then
$\partial \Dirac$ is the Dirac operator associated to the induced $Spin^{c}$- or   $Spin$-structure
induced on $\partial^{\infty} Z$.

The  following result is well-known and an equivariant version of Roe's partitioned manifold index theorem, see  e.g.  \cite{Zeidler:2015aa}, \cite{Nitsche:2019aa}.  
 \begin{lem}\label{qroigjhqoegrgegwergregwergre}
We have
$r(\ind^{G}(\Dirac))=\ind^{K}(\partial \Dirac)$. 
\end{lem}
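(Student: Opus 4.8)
The plan is to reduce the statement to the partitioned manifold index theorem by means of the analytic model of the obstruction morphism established in \cref{eiogwegergwegergwerg}. Recall that $\ind^{G}(\Dirac)\in K\cX^{G}_{n}(M)$ is represented, via the homomorphism picture of $K$-theory, by the $*$-homomorphism $C_{0}(\R)\ni\varphi\mapsto\varphi(\Dirac)\in C(M,H,\phi)$ (as in the proof of \cref{fergoijergoewgergewrgreg}). The key point is that, under the identifications $\kappa_{\bullet}$ of \eqref{bsgfokwtpobsgbsdfbdfsbfdsb}, the composite $r=\delta^{C^{*}}\circ\kappa^{-1}_{(X,H,\phi)}$ by \eqref{eoirgjowergregwergerg} and \eqref{vewvewrvervevervwevervrev}, so that computing $r(\ind^{G}(\Dirac))$ amounts to computing the image of $[\varphi\mapsto\varphi(\Dirac)]$ under the $C^{*}$-algebraic boundary map $\delta^{C^{*}}$ of the extension \eqref{qewfoifjoqwefqwefqewf}.

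First I would exhibit a lift of this cycle to $K$-theory of the middle term $C^{G,K}(M,Z)$: by \cref{qoijfoqefeqewfwefeqwfe} and \cref{eirogjwoegwergerwf} the operators $\varphi(\Dirac)$ and $\varphi(\Dirac^{\infty Z,\mathrm{APS}})$ (for the self-adjoint extension of $\Dirac_{Z}$ determined by the APS-type boundary projection associated to $\partial\Dirac$) are affiliated, i.e.\ $q(\varphi(\Dirac))=p(\varphi(\Dirac^{\infty Z}))$. Hence $\varphi\mapsto\varphi(\Dirac^{\infty Z})$ defines a homomorphism $C_{0}(\R)\to C(Z,H_{Z},\phi_{Z})$ whose composition with $p$ factors through $\sigma$; by the definition of $C^{G,K}(M,Z)$ this produces a lift of $\ind^{G}(\Dirac)$ along $K(\sigma)$. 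Therefore $\delta^{C^{*}}(\ind^{G}(\Dirac))$ is computed, by naturality of the $K$-theory boundary map applied to the map of extensions comparing \eqref{qewfoifjoqwefqwefqewf} with the $C^{*}$-algebraic partitioning extension $0\to C(\partial Z,\dots)\to C^{G,K}(M,Z)\to C(M,H,\phi)\to 0$ restricted along the $Z$-side, as the class of the ``boundary defect'' $\varphi(\Dirac^{\infty Z})-q(\varphi(\Dirac))$, which is supported near $\partial^{\infty}Z$ and whose $K$-theory class is precisely the localized index of $\partial\Dirac$. This last identification is the classical computation underlying Roe's partitioned manifold index theorem; equivariantly and in the present $K$-homology-of-Roe-algebras language it is contained in \cite{Zeidler:2015aa}, \cite{Nitsche:2019aa}, and one only has to observe that the boundary Dirac operator arising from the APS-type boundary condition and from the coarse cutting procedure agree up to zeroth order, so that they have the same index class $\ind^{K}(\partial\Dirac)\in K\cX^{K}_{n-1}(\partial Z)$.

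The main obstacle is the bookkeeping required to match the coarse boundary object $\partial Z$ (a big family of thickenings of $Z\cap\{M\setminus Z\}$) with the genuine boundary hypersurface $\partial^{\infty}Z$ and its Dirac operator $\partial\Dirac$: one must check that the localized Roe algebra $C(\partial Z,H_{Z},\phi_{Z})$ captures exactly the operators living in a coarse neighbourhood of $\partial^{\infty}Z$, that under $\kappa_{(\partial Z,H_{Z},\phi_{Z})}$ the class of $\varphi(\Dirac^{\infty Z})-q(\varphi(\Dirac))$ maps to $\ind^{K}(\partial\Dirac)\in K\cX^{K}_{n-1}(\partial Z)$, and that the well-definedness of $\partial\Dirac$ up to zeroth order perturbations does not affect this class (by homotopy invariance of the index along paths of self-adjoint Dirac-type operators with invertible ends in the relevant degrees, or by the standard normalization to the invertible-double computation). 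Once this identification is in place, the commutativity of \eqref{eoirgjowergregwergerg} transports the equality $\delta^{C^{*}}(\ind^{G}(\Dirac))=\ind^{K}(\partial\Dirac)$ to $r(\ind^{G}(\Dirac))=\ind^{K}(\partial\Dirac)$, which is the claim.
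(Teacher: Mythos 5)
Your overall strategy --- transporting the problem through the equivalence of \cref{eiogwegergwegergwerg} and computing the $C^{*}$-algebraic boundary map of the extension \eqref{qewfoifjoqwefqwefqewf} directly on the cycle $\varphi\mapsto\varphi(\Dirac)$ --- is a legitimate alternative to what the paper does (the paper instead works with the coarse symbol class in $\Sigma^{-1}K\cX^{G}\cO^{\infty}(M)$, compares Mayer--Vietoris boundaries via the relative index theorem and the suspension formula, and uses that the cone boundary intertwines them). However, as written your middle step contains a genuine logical error. You assert that the homomorphism $\varphi\mapsto\varphi(\Dirac^{\infty Z,\mathrm{APS}})$ ``produces a lift of $\ind^{G}(\Dirac)$ along $K(\sigma)$''. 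If the class $\ind^{G}(\Dirac)$ actually lifted along $K(\sigma)$, exactness of the $K$-theory sequence of \eqref{qewfoifjoqwefqwefqewf} would force $\delta^{C^{*}}(\ind^{G}(\Dirac))=0$, i.e.\ $r(\ind^{G}(\Dirac))=0$ --- which is precisely the conclusion of \cref{fergoijergoewgergewrgreg} in the case where a boundary condition compatible with the degree-$n$ Clifford structure exists, and it contradicts the statement you are trying to prove whenever $\ind^{K}(\partial\Dirac)\neq 0$. The point is that the APS extension is exactly \emph{not} compatible with the grading and the Clifford action, so $\varphi\mapsto\varphi(\Dirac^{\infty Z,\mathrm{APS}})$ is not a degree-$n$ cycle and does not lift the class; what one lifts is only the element (e.g.\ via the completely positive section $q$), and the boundary class is the obstruction to upgrading that lift to a cycle of the correct multigraded type. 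Your identification of $\delta^{C^{*}}(\ind^{G}(\Dirac))$ with the class of the ``boundary defect'' is the right intuition, but it cannot follow from the (false) existence of a lift of the class; it is the content of the partitioned-manifold computation itself, which you defer entirely to the literature.

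A second, smaller gap: you invoke \cref{qoijfoqefeqewfwefeqwfe} and \cref{eirogjwoegwergerwf} for the APS extension, but both are proved by finite-propagation-speed arguments that use the \emph{locality} of the boundary condition; APS conditions are non-local, so the affiliation $q(\varphi(\Dirac))=p(\varphi(\Dirac^{\infty Z,\mathrm{APS}}))$ needs a separate argument and cannot be quoted from those propositions. Moreover, the ``map of extensions'' you appeal to compares \eqref{qewfoifjoqwefqwefqewf} with itself and therefore carries no information. To repair the proof along your lines you would have to carry out the boundary-defect computation honestly (essentially reproving the partitioned manifold index theorem in the Roe-algebra picture), whereas the paper's route reduces to the symbol level, where the comparison of Mayer--Vietoris boundaries and the suspension formula replace the analytic boundary-defect analysis.
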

\begin{proof}
%
%
%

We will  sketch a version of  Zeidler's proof  \cite{Zeidler:2015aa}  using the formalism developed in \cite{Bunke:2018ty}.
Using the coarse cone construction $\cO^{\infty}$ \cite[Sec. 9.1]{equicoarse} and the (equivariant version of) \cite[Lemma 9.6]{ass} we define the  closed local homology functor
$K\cX^{G}\cO^{\infty}:G\UBC\to \Sp$ satisfying    (equivariant generalizations of) the axioms in \cite[3.12]{ass}.
By  \cite[Def. 4.14]{Bunke:2018ty}  the Dirac operator $\Dirac$ has a symbol class $\sigma(\Dirac)$ in 
$\Sigma^{-1} K\cX^{G}\cO^{\infty}(M)$.  The symbol class determines the index via \begin{equation}\label{werg09i0wergwr}
\partial \sigma^{G}(\Dirac)=\ind^{G}(\Dirac)\ ,
\end{equation} where $\partial$ is the cone boundary, see \cite[(9.1]{ass}. 
One can compare the Mayer-Vietoris boundary $\delta$ for $K\cX^{G}\cO^{\infty}$  applied to $M$ and the decomposition $(Z, M\setminus \inter(Z))$ with that for
$\R\otimes Z$ and $((-\infty,0]\otimes Z, [0,\infty)\otimes Z)$. Using the relative index theorem and the suspension 
formula \cite[Th, 10.4 \& Thm. 11.1]{indexclass}   we get\footnote{This is the sketchy step of the argument.} 
 the equality  $$\sigma^{G}( \partial \Dirac)=\delta \Res^{G}_{K}\sigma^{G}(\Dirac)\ .$$ 
 Since the cone boundary is a morphism of local homology theories  it is compatible with the Mayer-Vietoris boundaries. We conclude that 
\begin{eqnarray*}
r(\ind^{G}(\Dirac))&=&\delta( \Res^{G}_{K}(\ind^{G}(\Dirac)))
= \delta \Res^{G}_{K}\partial \sigma^{G}(\Dirac)
= \delta\partial  \Res^{G}_{K}\sigma^{G}(\Dirac)\\
&=&\partial  \delta  \Res^{G}_{K}\sigma^{G}(\Dirac)
=\partial  \sigma^{K}(\partial \Dirac)
=\ind^{K}(\partial \Dirac)\ ,
\end{eqnarray*}
as claimed.
\end{proof}

\begin{ex}
Assume that $D$ is of Laplace type and given by $D=\Dirac^{2}$ for some graded Dirac operator.
 Then we can decompose $D=D^{+}+D^{-}$, and we have the equality
 \begin{equation}\label{tgjoiwergergergwe}
[E_{D^{+}}(-1,\lambda)]-[E_{D^{-}}(-1,\lambda)]=\ind^{G}(\Dirac)\ .
\end{equation} 
In this case we can use \cref{qroigjhqoegrgegwergregwergre} in order to calculate at least the image of the difference 
$[E_{D^{+}}(-1,\lambda)]-[E_{D^{-}}(-1,\lambda)]$ under $r$. 
\end{ex}

\begin{ex}\label{qreoighjqorgergwrgwreg}
 We assume that 
 $\Dirac$ is the Dirac operator associated to a spin structure on $M$.
By the Schr\"odinger-Lichenerowicz formula we have $$\Dirac^{2}=\nabla^{*}\nabla+\frac{1}{4} s\ ,$$ where $s$ in $C^{\infty}(M)$ is the scalar curvature function.

\begin{kor}\label{weoigjwegergregegwr}
If $\ind^{K}(\partial \Dirac)\not=0$, then $ \inf_{Z }   s\le 0$.
\end{kor}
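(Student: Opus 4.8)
The plan is to argue by contradiction. Assume $\inf_{Z}s>0$ and put $c:=\tfrac14\inf_{Z}s>0$. I would run the argument with the Laplace-type operator $D:=\Dirac^{2}$, which has non-negative spectrum so that the Laplace case of \cref{rwejgoiejgoiergwregwreg} applies, together with its graded decomposition $D=D^{+}+D^{-}$, $D^{\pm}:=\Dirac^{2}|_{E^{\pm}}$, and with $D^{\prime}$ and $(D^{\pm})^{\prime}$ the Friedrichs (Dirichlet) extensions of $D_{Z}$ and $D^{\pm}_{Z}$ on $Z$. These are selfadjoint extensions defined by a local $K$-invariant elliptic boundary condition; $K$-invariance is clear because $K$ acts isometrically preserving $Z$ and hence $\partial^{\infty}Z$, and these extensions respect the grading because the grading is a bundle involution preserved by $\Dirac^{2}$ and by the Dirichlet condition.

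The scalar-curvature hypothesis enters through the Schr\"odinger--Lichnerowicz formula $\Dirac^{2}=\nabla^{*}\nabla+\tfrac14 s$ from \cref{qreoighjqorgergwrgwreg}: for $u$ in the form domain of the Dirichlet extension the boundary term in the integration by parts vanishes, whence
\[
\langle D^{\prime}u,u\rangle=\|\nabla u\|^{2}+\tfrac14\int_{Z}s\,|u|^{2}\ \ge\ c\,\|u\|^{2},
\]
and the same bound holds for $(D^{\pm})^{\prime}$. Hence $\sigma(D^{\prime})$, $\sigma((D^{+})^{\prime})$ and $\sigma((D^{-})^{\prime})$ all lie in $[c,\infty)$, so every $\lambda\in(0,c)$ lies in the resolvent set of $(D^{\pm})^{\prime}$ and $(-1,\lambda)$ is a spectral interval of $(D^{\pm})^{\prime}$ in the sense of \cref{weroigjoergergegergwegergg}.

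Next I would fix $\lambda\in(0,c)$ which in addition lies in the resolvent set of $\Dirac^{2}$ on $M$, i.e.\ $\lambda\notin\sigma(D^{+})\cup\sigma(D^{-})$; such a $\lambda$ is exactly what makes the spectral projections in \eqref{tgjoiwergergergwe} well defined, so that identity is available. For this $\lambda$ the interval $(-1,\lambda)$ is a spectral interval of $D^{\pm}$ and of $(D^{\pm})^{\prime}$, and $\varphi(D^{\pm})$, $\varphi((D^{\pm})^{\prime})$ are affiliated by the even/odd part of the argument for \cref{eirogjwoegwergerwf} (the wave operators $\cos(t\sqrt{D^{\pm}})$ and $\cos(t\sqrt{(D^{\pm})^{\prime}})$ still have finite propagation speed). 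Applying \cref{ajfiovafdsvavdsvadv} separately to the pairs $D^{\pm},(D^{\pm})^{\prime}$ --- this is the reasoning behind part~(\ref{feuivhiqvcq1}) of \cref{ewgiowgregwergwrg} --- yields $r([E_{D^{+}}(-1,\lambda)])=0$ and $r([E_{D^{-}}(-1,\lambda)])=0$.

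Finally, combining these vanishings with the identity \eqref{tgjoiwergergergwe} and with \cref{qroigjhqoegrgegwergregwergre}, and applying $r$, we get
\[
\ind^{K}(\partial\Dirac)=r(\ind^{G}(\Dirac))=r([E_{D^{+}}(-1,\lambda)])-r([E_{D^{-}}(-1,\lambda)])=0,
\]
contradicting $\ind^{K}(\partial\Dirac)\ne0$; hence $\inf_{Z}s\le0$. The step needing care is the choice of $\lambda$: one wants a single energy level lying both in the positive spectral gap $(0,\tfrac14\inf_{Z}s)$ of the Dirichlet extension on $Z$ provided by Schr\"odinger--Lichnerowicz and in the resolvent set of $\Dirac^{2}$ on all of $M$. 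Once such a $\lambda$ is fixed, the rest is the routine bookkeeping of running \cref{ewgiowgregwergwrg} for the two graded halves $D^{+}$ and $D^{-}$ in place of $D$.
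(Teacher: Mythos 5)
Your argument is correct and coincides with the paper's own proof: assume $\inf_{Z}s>0$, use Schr\"odinger--Lichnerowicz to bound the Dirichlet spectra of $(D^{\pm})^{\prime}$ from below, deduce $r([E_{D^{\pm}}(-1,\lambda)])=0$ for small positive $\lambda$, and conclude via \eqref{tgjoiwergergergwe} and \cref{qroigjhqoegrgegwergregwergre}. You are in fact slightly more careful than the paper, which states the lower bound as $\inf_{Z}s$ rather than $\tfrac14\inf_{Z}s$ and does not comment on the (genuinely necessary) requirement that the chosen $\lambda$ also lie outside $\sigma(D)$ on $M$ so that the classes in \eqref{tgjoiwergergergwe} are defined.
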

 \begin{proof}
 Assume that  
  $c:=\inf_{Z } s>0$. Then  
 the   spectra of the  Dirichlet extensions of
 $D^{\prime,\pm}$ are bounded below by $c$. 
 Then $c/2\not\in \sigma(D^{\prime,\pm})$ and by 
  \cref{ewgiowgregwergwrg}(\ref{feuivhiqvcq1})  this implies 
 $r([E_{D^{\pm}}(-1,c/2)])=0$.
  From \eqref{tgjoiwergergergwe} we conclude that 
 $r(\ind^{G}(\Dirac))=0$.
  By \cref{qroigjhqoegrgegwergregwergre} we get $\ind^{K}(\partial \Dirac)=0$.
 \end{proof}

 In view of $ \ind^{K}(\partial \Dirac)=\partial \sigma^{K}(\Dirac)$ we see that  $\ind^{K}(\partial \Dirac)$ only depends on the $K$-uniform bornological coarse structure associated to the $K$-invariant metric
 on $\partial Z$. If $\ind^{K}(\partial \Dirac)\not =0$, then by \cref{weoigjwegergregegwr}
 there does not exist a complete $G$-invariant metric 
 on $M$ whose restriction to $Z$ has uniformly positive scalar curvature 
 and induces the given $K$-uniform bornological coarse structure on $Z$.
 %
\end{ex}

\begin{rem}
In this {remark} we explain the relation of \cref{qreoighjqorgergwrgwreg} with the classical codimension-one obstruction against
positive scalar curvature in the version of \cite[Thm. 1.7]{Zeidler:2015aa}.

We let $\bar M$ be a connected closed $n$-dimensional spin manifold with a connected codimension-one submanifold $\bar N$ with trivialized normal bundle
such that $\pi_{1}(\bar N)\to \pi_{1}(\bar M)$ is injective. 
We then set $G:=\pi_{1}(\bar M)  $  and $K:=\pi_{1}(\bar N)$. We let 
$M$ be the universal covering of $\bar M$.
For  $\partial^{\infty} Z$ we choose a connected component of the preimage of $\bar N$ under the projection $M\to \bar M$. 
Since the normal bundle  of $\partial^{\infty}  Z$ is trivial, the submanifold $\partial^{\infty}  Z$  separates $M$ into  two components, and we let
$Z$ be the component such that the normal vectors are outward pointing. The subgroup $K$ preserves $Z$. 
 
 Since $G$ acts freely and cocompactly on $M$, and similarly, $K$ acts freely and cocompactly on $N$, we have equivalences
 \begin{align}\label{qwefouj0qewfqweffe}
K\cX^{G}(M)&\simeq K\cX^{G}(G_{can,min})\simeq K(C_{r}(G))\ , \\    K\cX^{K}(\partial^{\infty} Z  )&\simeq K\cX^{K}(K_{can,min})\simeq K(C_{r}(K))\ .\nonumber
\end{align} 
In the first line, the first equivalence is induced by the coarse equivalence $G_{can,min}\to M$ induced by the map
$G\to M$, $g\mapsto gm$, for any choice of a base point $m$ in $M$, and the second equivalence is  \cite[Prop. 8.2]{coarsek}. The second line is analoguous.
The $\alpha^{G}$-invariant   
 $\alpha^{G}(\bar M) $ of $\bar M$ in $  K_{n}(C_{r}(G))$ is defined as the image of    $\ind^{G}(\Dirac_{M})$ 
 under the identification \eqref{qwefouj0qewfqweffe}, {where $\Dirac_M$ is the spin Dirac operator.}
 Similarly,  $\alpha^{G}(\bar N)$ in $ K_{n-1}(C_{r}(G))$ is the image of    $\ind^{K}(\Dirac_{\partial Z})$  under  \eqref{qwefouj0qewfqweffe}.  
 Under the identifications \eqref{qwefouj0qewfqweffe} the map 
$$r_{*}:K\cX^{G}_{*}(M) \to K\cX_{*-1}^{K}(\partial Z) $$  corresponds to the map 
$$\Psi:K_{*}(C_{r}(G))\to K_{*-1}(C_{r}(K))$$ from the proof of \cite[Thm. 1.7]{Zeidler:2015aa}, and the equality
$$\Psi(\alpha^{G}(\bar M))=\alpha^{K}(\bar N)$$ shown in this reference 
is a {special} case {of} \cref{qroigjhqoegrgegwergregwergre}.

Note that  \cite{Zeidler:2015aa} has also a version for the maximal group $C^{*}$-algebras which at the moment
is not accessible by our general coarse methods.
\end{rem}

\color{black}

\section{Incoorporating local positivity}

Let $M$ and $D$ be as in  \cref{erguiqrgeqfweqef}. Let $P$ be a  $G$-invariant subset of $M$.

\begin{ddd}
We say that $D$ is uniformly locally positive on $P$ if there exists a $c$ in $(0,\infty)$ {such that the following holds.}
\begin{enumerate}
\item (Dirac case) We have $D^{2}=\nabla^{*}\nabla+R$ for some selfadjoint bundle endomorphism $R$ such that $R(m)\ge c^{2}$ for all $m$ in $P$.
\item (Laplace case) $D= \nabla^{*}\nabla+R$ for some selfadjoint bundle endomorphism $R$ such that
$R(m)\ge  c$ for all $m$ in $P$.
\end{enumerate}
The constant $c$ is called the positivity bound.
\end{ddd}

We let $M_{P}$ denote the $G$-bornological coarse space {$M$} with the metric coarse structure and the bornology 
$$\cB_{P}:=\{B\subseteq M \:|\: \mbox{$B\cap (M\setminus P)$ is bounded}\}\ .$$
 In this subsection we  let $Z_{P}$  be the {$K$-invariant} subset  $Z$ {of $M$} with the induced $K$-bornological coarse structure from $M_{P}$.

\begin{prop} For every $\varphi$ in $C_{0}((-c,c))$ 
 we have  $\varphi(D)\in C(M_{P},H,\phi)$ and  
 $\varphi(D^{\prime})\in C(Z_{P},H_{Z},\phi_{Z})$.
 \end{prop}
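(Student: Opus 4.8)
The plan is to mimic the proof of \cref{qoijfoqefeqewfwefeqwfe}, the only difference being that we must keep track of the refined bornology $\cB_P$ (and the induced one on $Z_P$) rather than the minimal one. The key point is that the wave operator technique used there produces operators whose propagation is finite, and the extra input we now have is that $\varphi(D)$ "comes from" the uniform local positivity on $P$, which should make the relevant matrix coefficients decay away from $P$, i.e. be controlled by the bornology $\cB_P$.

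First I would reduce, as in \cref{eirogjwoegwergerwf}, to functions $\varphi_1 \in C_0(\R)$ with $\hat\varphi_1 \in C_c(\R)$ (using $\varphi_1 = \varphi$ in the Laplace case and $\varphi_1(t) = \varphi(t^2)$ in the Dirac case, together with evenness), since such functions are dense in $C_0((-c,c))$ and the Roe algebras $C(M_P,H,\phi)$ and $C(Z_P,H_Z,\phi_Z)$ are closed. Once $\hat\varphi_1$ is compactly supported, $\varphi_1(D) = \frac{1}{2\pi}\int \hat\varphi_1(t)\, e^{itD}\, dt$ (resp. the analogous formula with $\cos(t\sqrt D)$ in the Laplace case) is an integral over a bounded range of $t$, so by finite propagation speed $\varphi_1(D)$ is controlled by the metric entourage $U_R = \{d \le R\}$ where $R = \sup|\supp\hat\varphi_1|$. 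This already gives the controlledness condition for the Roe algebra; what remains is local compactness relative to $\cB_P$, i.e. that $\phi(B)\varphi_1(D)$ and $\varphi_1(D)\phi(B)$ are compact for $B \in \cB_P$.

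The main obstacle — and the place where uniform local positivity enters — is precisely this local compactness statement with respect to $\cB_P$. The idea is: if $\varphi \in C_0((-c,c))$, then since $D$ restricted to sections supported near $P$ has no spectrum in $(-c,c)$ (by the Bochner/Lichnerowicz-type inequality $D^2 \ge c^2$ resp. $D \ge c$ coming from $R \ge c^2$ resp. $R \ge c$ on $P$, modulo the boundary term which is controlled by finite propagation), $\varphi(D)$ should be small — in fact negligible up to controlled error — on the part of $M$ deep inside $P$. Concretely, for a point $m$ at distance $> R$ from $M\setminus P$, the value $(\varphi_1(D)u)(m)$ depends only on $u|_{B_R(m)} \subseteq P$, and on sections supported in $P$ one has the operator inequality forcing $\varphi_1(D)$ to act trivially there. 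Hence $\varphi_1(D) = \phi(U_R[M\setminus P])\varphi_1(D)\phi(U_R[M\setminus P])$ up to something supported near $M\setminus P$; but $U_R[M\setminus P]$ is exactly the kind of set that $\cB_P$ "allows", so that $\phi(B)\varphi_1(D)$ is a product of a projection onto a bounded set (after intersecting with $U_R[M\setminus P]$ one gets a genuinely bounded region, using that $B \cap (M\setminus P)$ is bounded and $R$ is finite) with $\varphi_1(D)$, and local compactness of the ordinary Roe algebra associated to $(H_0,\phi_0)$ finishes the argument. For $\varphi(D')$ the identical reasoning applies, using in addition — exactly as in \cref{qoijfoqefeqewfwefeqwfe} — that the wave equation $\cos(t\sqrt{D'})$ still has finite propagation speed because the boundary condition at $\partial^\infty Z$ is local, and that the local positivity hypothesis is imposed on $P$ (hence on $P \cap Z$) with the same constant.

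Finally I would assemble these observations: controlledness from finite propagation speed, and local compactness relative to $\cB_P$ (resp. the induced bornology on $Z_P$) from the computation above, together show $\varphi_1(D) \in C(M_P,H,\phi)$ and $\varphi_1(D') \in C(Z_P,H_Z,\phi_Z)$; passing to the closure gives the claim for all $\varphi \in C_0((-c,c))$. The step I expect to require the most care is making the phrase "$\varphi_1(D)$ is negligible deep inside $P$" precise — one has to argue with the operator inequality on the Hilbert subspace of sections supported in (a neighborhood of) $P$ and combine it cleanly with the finite-propagation localization, rather than with a naive pointwise estimate; this is where one genuinely uses $\varphi \in C_0((-c,c))$ and not merely $\varphi \in C_0(\R)$.
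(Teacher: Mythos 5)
Your overall strategy is the same as the paper's: controlledness from finite propagation speed (after reducing to $\hat\varphi_{1}\in C_{c}(\R)$, resp.\ to even functions and $\cos(t\sqrt{D})$ in the Laplace case), plus the uniform positivity on $P$ to show that $\varphi(D)$ is, up to small error, supported on a metric thickening $U_{R}[M\setminus P]$ of $M\setminus P$. The paper simply cites Roe's partial vanishing lemma \cite[Lemma 2.3]{roe_psc_note} for exactly this localisation statement, i.e.\ for $\varphi(D)\in C(\{M\setminus P\},H,\phi)$, and then invokes the inclusion $C(\{M\setminus P\},H,\phi)\subseteq C(M_{P},H,\phi)$; what you sketch in your third paragraph is essentially the content of Roe's lemma, and you correctly flag it as the delicate step. (One caveat there: ``acts trivially'' must be weakened to ``acts with norm at most $\sup_{|\xi|\ge c}|\varphi_{1}(\xi)|$'', since a function with compactly supported Fourier transform cannot vanish on $\{|\xi|\ge c\}$; the vanishing is only recovered in the limit over the approximating sequence, using that the limit function is supported in $(-c,c)$.)

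The step that fails as written is the final deduction of local compactness with respect to $\cB_{P}$: you claim that for $B\in\cB_{P}$ the intersection $B\cap U_{R}[M\setminus P]$ is metrically bounded ``using that $B\cap(M\setminus P)$ is bounded and $R$ is finite''. This is false: $B$ may be an unbounded subset of $P$ lying entirely inside the width-$R$ collar $P\cap U_{R}[M\setminus P]$ (for instance $P$ the upper half-plane in $\R^{2}$ and $B$ a horizontal line at height $1$, so that $B\cap(M\setminus P)=\emptyset$ while $B\cap U_{R}[M\setminus P]=B$). For such $B$ the compactness of $\phi(B)\varphi_{1}(D)$ does not follow from local compactness over metrically bounded sets, so your argument (and, for that matter, the paper's ``obvious inclusion'') needs the bounded sets of $M_{P}$ to meet every thickening $U[M\setminus P]$ in a metrically bounded set -- which is also what compatibility of the bornology with the coarse structure forces. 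With $\cB_{P}$ read in that way, the passage from ``supported near $M\setminus P$'' to membership in $C(M_{P},H,\phi)$ goes through and your proof matches the paper's.
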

 \begin{proof}
 For the Dirac case
 the assertion $\varphi(D)\in C(\{M\setminus P\},H,\phi)$ is \cite[Lemma 2.3]{roe_psc_note}.  We now use the obvious inclusion $$C(\{M\setminus P\},H,\phi)\subseteq C(M_{P},H,\phi)\ .$$
 
Again the case of $D^{\prime}$ has the same proof.  

For the Laplace case, by positivity of $D$ and $D^{\prime}$ we can assume that $\varphi$ is even.
We then use the finite propagation speed of the wave operator family $\cos(t\sqrt{D})$ and argue similarly as in the Dirac case.
  \end{proof}

\begin{prop}\label{eirogjwoegwergerwf1}For every $\varphi$ in $C_{0}((-c,c))$ the operators $\varphi(D)$ and $\varphi(D^{\prime})$ are affiliated.
\end{prop}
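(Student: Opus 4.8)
The plan is to deduce this from \cref{eirogjwoegwergerwf} together with the proposition preceding \cref{eirogjwoegwergerwf1} in this subsection, upgrading the conclusion from the minimal bornology on $M$ to the bornology $\cB_{P}$. Since $C_{0}((-c,c))\subseteq C_{0}(\R)$, \cref{eirogjwoegwergerwf} applies verbatim — the operators $D$ and $D^{\prime}$ are unchanged, only the bornology has been modified — and shows that the operator $T:=\phi(Z)\varphi(D)\phi(Z)-\varphi(D^{\prime})$ lies in the localized Roe algebra $C(\partial Z,H_{Z},\phi_{Z})$ formed inside $C(Z,H_{Z},\phi_{Z})$ (with the minimal bornology). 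The preceding proposition moreover gives $\varphi(D)\in C(M_{P},H,\phi)$ and $\varphi(D^{\prime})\in C(Z_{P},H_{Z},\phi_{Z})$; conjugating the first by $\phi(Z)$ places $\phi(Z)\varphi(D)\phi(Z)$ in $C(Z_{P},H_{Z},\phi_{Z})$, whence $T\in C(Z_{P},H_{Z},\phi_{Z})$ as well.

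It then remains to show that $T$ already lies in the localized Roe algebra $C(\partial Z,H_{Z},\phi_{Z})$ formed inside $C(Z_{P},H_{Z},\phi_{Z})$, which is exactly the assertion $q(\varphi(D))=p(\varphi(D^{\prime}))$, i.e.\ that $\varphi(D)$ and $\varphi(D^{\prime})$ are affiliated. For this I would truncate: for a $K$-invariant entourage $U$ of $M$ and the member $Z_{U}:=Z\cap U[M\setminus Z]$ of the big family $\partial Z$, the operator $\phi(Z_{U})T\phi(Z_{U})$ again belongs to $C(Z_{P},H_{Z},\phi_{Z})$ and is supported on $Z_{U}$, hence lies in the localized Roe algebra relative to $Z_{P}$. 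On the other hand, since $T$ lies in the \emph{minimal}-bornology localized Roe algebra, which is generated by operators supported on members of $\partial Z$, a routine estimate shows that $\|T-\phi(Z_{U})T\phi(Z_{U})\|\to 0$ as $U$ runs through the directed set of $K$-invariant entourages of $M$. Since the localized Roe algebra relative to $Z_{P}$ is norm-closed, it therefore contains $T$, as desired.

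The step I expect to require the most care is the interplay between the two bornologies. One must check that the cutting-down operations $A\mapsto\phi(Z)A\phi(Z)$ and $A\mapsto\phi(Z_{U})A\phi(Z_{U})$ preserve membership in the $\cB_{P}$-Roe algebra $C(Z_{P},H_{Z},\phi_{Z})$; this should be formal, since conjugating by the projection-valued measure of a subset preserves finite propagation and preserves local compactness against every $\cB_{P}$-bounded subset. The other point needing attention is the norm-convergence of the truncations $\phi(Z_{U})T\phi(Z_{U})$ to $T$, which uses the description of the localized Roe algebra as the closed ideal generated by operators supported on members of the big family $\partial Z$. All the genuinely analytic ingredients — finite propagation speed of the relevant wave operator and the support estimates near $\partial^{\infty}Z$ coming from the locality of the boundary condition, together with the positivity reduction to even $\varphi$ in the Laplace case — are already contained in \cref{eirogjwoegwergerwf} and in the preceding proposition, so no new hard analysis is needed.
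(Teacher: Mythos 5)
Your proof is correct, but it is organised differently from the paper's. The paper's proof of this proposition is literally the single sentence ``the proof is the same as for \cref{eirogjwoegwergerwf}'', i.e.\ the intended argument is to rerun the finite-propagation-speed computation (reduce to $\varphi_{1}$ with $\hat\varphi_{1}\in C_{c}(\R)$, write $\phi(Z)\varphi_1(D)\phi(Z)-\varphi_1(D')$ as a sum of two operators supported on a member of $\partial Z$) in the new bornology. You instead quote \cref{eirogjwoegwergerwf} as a black box for the minimal bornology, quote the preceding proposition for membership of $\varphi(D)$ and $\varphi(D')$ in the $\cB_{P}$-Roe algebras, and then upgrade by truncating with $\phi(Z_{U})$ and using norm-closedness of the localized ideal. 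Your truncation estimate is indeed routine: the localized Roe algebra is the closure of the (already ideal-like) union of operators exactly supported on members of $\partial Z$, so approximating $T$ by such an operator $T_\epsilon$ with $T_\epsilon=\phi(Z_{U_\epsilon})T_\epsilon\phi(Z_{U_\epsilon})$ gives $\|T-\phi(Z_U)T\phi(Z_U)\|\le 2\epsilon$ for all $U\supseteq U_\epsilon$. I would add that your route quietly repairs a point that a literal rerun of the earlier proof would have to address: the density argument there approximates $\varphi$ by functions with compactly supported Fourier transform, and by Paley--Wiener no nonzero such function lies in $C_{0}((-c,c))$, so for the approximants $\varphi_{1}$ the operators $\varphi_{1}(D)$ need not belong to $C(M_{P},H,\phi)$ and the $\cB_{P}$-versions of $q$ and $p$ are not defined on them. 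Your argument sidesteps this by only ever applying the minimal-bornology statement to the approximants and performing the passage to the $\cB_{P}$-bornology on the single operator $T$, which \emph{is} known to lie in $C(Z_{P},H_{Z},\phi_{Z})$. So: correct, same analytic ingredients, but a cleaner logical packaging than the one-line proof in the paper.
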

\begin{proof}
The proof is the same as for  \cref{eirogjwoegwergerwf}.
  \end{proof}

{In the following we use the notation $r_{P}:K\cX^{G}(M_{P})\to \Sigma K\cX^{K}(\partial Z_{P})$ for the obstruction morphism  \eqref{eqwfpokqpowefqewfqwefewf} with the new bornology incorporated,
where $\partial Z_{P}$ is the big family $\partial Z$ whose members have  the bornological coarse structure induced from $M_{P}$. 
 The identity maps  of the underlying sets of $M$ and the members of the family  induce the vertical maps in the following    commutative  diagram  
$$\xymatrix{K\cX^{G}(M_{P})\ar[r]^{r_{P}}\ar[d]&\Sigma K\cX^{K}(\partial Z_{P})\ar[d]\\K\cX^{G}(M)\ar[r]^{r}&\Sigma K\cX^{K}(\partial Z)}\ .$$}

 Let $\sigma(D)$ denote the spectrum of $D$.
Let $\lambda$ be in $ (-\infty,c)\setminus \sigma(D)$.

\begin{prop}\label{eoirjgeggergegw} Assume that $D$ is of Laplace type.  \begin{enumerate} 
\item \label{feuivhiqvcqnnn}  We have
$E_{D}(-1,\lambda)\in C(M_{P},H,\phi)$. 
 \item  \label{feuivhiqvcq1nnn} If $r_{{P}} ([E_{D}(-1,\lambda)])\not=0$, then $\lambda\in \sigma(D^{\prime})$.
 \end{enumerate}
 
\end{prop}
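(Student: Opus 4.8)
The plan is to mimic the proof of \cref{ewgiowgregwergwrg} verbatim, simply carrying the modified bornology $\cB_P$ along and invoking the $P$-version of the affiliation statement. First I would reduce to the case $\lambda \geq 0$, since for $\lambda < 0$ both assertions are trivial (for such $\lambda$ one has $E_D(-1,\lambda)=0$ as $D$ is non-negative). For the remaining range $0 \le \lambda < c$, I would choose $\varphi$ in $C_0((-c,c))$ that is strictly monotonically decreasing on $[-1,\lambda+\epsilon)$ for some small $\epsilon>0$ with $\lambda+\epsilon<c$; this is possible precisely because the relevant spectral window $[-1,\lambda]$ lies strictly inside $(-c,c)$. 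By the spectral mapping theorem, $E_D(-1,\lambda)=E_{\varphi(D)}((\varphi(\lambda),\varphi(-1)))$, and since $\varphi(-1)$ and $\varphi(\lambda)$ are not in the spectrum of $\varphi(D)$ (as $-1,\lambda \notin \sigma(D)$), the interval $(\varphi(\lambda),\varphi(-1))$ is a spectral interval for $\varphi(D)$. The new input compared to \cref{ewgiowgregwergwrg} is that $\varphi \in C_0((-c,c))$ forces $\varphi(D) \in C(M_P,H,\phi)$ by the first Proposition of this section (local positivity of $D$ on $P$ with bound $c$ is what makes $\varphi(D)$ supported, up to norm-small errors, on bounded-in-$\cB_P$ sets). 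Hence $E_{\varphi(D)}((\varphi(\lambda),\varphi(-1))) \in C(M_P,H,\phi)$, which is assertion (\ref{feuivhiqvcqnnn}).

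For (\ref{feuivhiqvcq1nnn}) I would argue by contraposition, exactly as in \cref{ewgiowgregwergwrg}(\ref{feuivhiqvcq1}): suppose $\lambda \notin \sigma(D')$. Then, repeating the spectral-mapping argument with the same $\varphi$, the interval $(\varphi(\lambda),\varphi(-1))$ is also a spectral interval for $\varphi(D')$, and $\varphi(D') \in C(Z_P,H_Z,\phi_Z)$ by the same Proposition. By \cref{eirogjwoegwergerwf1}, $\varphi(D)$ and $\varphi(D')$ are affiliated. Therefore the hypotheses of \cref{ajfiovafdsvavdsvadv} are met with $A=\varphi(D)$, $B=\varphi(D')$, $I=(\varphi(\lambda),\varphi(-1))$, and the ambient Roe algebra $C(M_P,H,\phi)$ in place of $C(M,H,\phi)$; note that \cref{ajfiovafdsvavdsvadv} and all of Section 4 are stated for an arbitrary very proper $G$-bornological coarse space, so the substitution of $M_P$ for $M$ is harmless. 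We conclude $r_P([E_{\varphi(D)}((\varphi(\lambda),\varphi(-1)))])=0$, i.e.\ $r_P([E_D(-1,\lambda)])=0$, contradicting the hypothesis of (\ref{feuivhiqvcq1nnn}).

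The only point requiring a small amount of care — and the main obstacle, such as it is — is verifying that all the structural results from Sections 3 and 4 (the identification \eqref{bsgfokwtpobsgbsdfbdfsbfdsb}, the exact sequence \eqref{qewfoifjoqwefqwefqewf}, and hence \cref{ajfiovafdsvavdsvadv}) genuinely apply to $M_P$ and $Z_P$. This needs: $M_P$ is very proper (it has the same coarse structure as $M$, only a finer bornology $\cB_P \subseteq \cB_M$, which does not disturb very properness, and one can use the restriction to $M_P$ of the same ample controlled Hilbert space $(H,\phi)$); $Z_P$ is still a wide $K$-invariant nice subset of $\Res^G_K(M_P)$ (wideness is a condition on the coarse structure and the group action, both unchanged; and $M\setminus P$ plays no role since $Z\supseteq$ the relevant translates). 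Given these observations — which are either immediate or were already noted when $M_P$ and $Z_P$ were introduced — the argument is word-for-word that of \cref{ewgiowgregwergwrg}, and I would present it as such, citing \cref{eirogjwoegwergerwf1}, \cref{ajfiovafdsvavdsvadv}, and the two Propositions of the present section.
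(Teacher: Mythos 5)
Your proposal is correct and matches the paper exactly: the paper's proof of this proposition is literally the one-line remark that it has the same proof as \cref{ewgiowgregwergwrg}, with the $P$-decorated inputs (\cref{eirogjwoegwergerwf1} and the preceding proposition of the section) substituted in. Your additional checks that $M_P$ remains very proper and $Z_P$ remains wide, and that $\varphi$ must now be chosen in $C_0((-c,c))$, are exactly the (implicit) points the paper is relying on.
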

\begin{proof}
Again this has the same proof as  \cref{ewgiowgregwergwrg}.
\end{proof}

If $D$ is  of Dirac type, then  we have an index class
$\ind^{G}(D)$ in $K\cX_{n}^{G}(M_{P})$. 

\begin{prop} \label{09qerug09egwgwerge}We assume that $D$ is of Dirac type. If $D^{\prime}$ exists, then we
 have $r_{{P}} (\ind^{G}(D))=0$.
\end{prop}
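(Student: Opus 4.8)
The plan is to repeat, now with the bornology $\cB_{P}$, the argument used for \cref{fergoijergoewgergewrgreg}. Recall that since $D$ is uniformly locally positive on $P$ with positivity bound $c$, the functional calculus $\varphi\mapsto\varphi(D)$ lands in $C(M_{P},H,\phi)$ exactly for $\varphi$ in $C_{0}((-c,c))$, and that the index class $\ind^{G}(D)$ in $K\cX^{G}_{n}(M_{P})$ is, under the identification \eqref{bsgfokwtpobsgbsdfbdfsbfdsb} for $M_{P}$, the class represented in the homomorphism picture of $K$-theory (Clifford-linear of degree $n$ in the Dirac case) by the $*$-homomorphism $C_{0}((-c,c))\ni\varphi\mapsto\varphi(D)\in C(M_{P},H,\phi)$. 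If $D^{\prime}$ exists, then likewise $\ind^{K}(D^{\prime})$ in $K\cX^{K}_{n}(Z_{P})$ is represented by $C_{0}((-c,c))\ni\varphi\mapsto\varphi(D^{\prime})\in C(Z_{P},H_{Z},\phi_{Z})$.

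First I would feed in the affiliation statement. By \cref{eirogjwoegwergerwf1} we have $q(\varphi(D))=p(\varphi(D^{\prime}))$ for every $\varphi$ in $C_{0}((-c,c))$, where $q$ and $p$ are the $\cB_{P}$-versions of the maps appearing in diagram \eqref{asdvkjnkqfv}. Hence the two induced $*$-homomorphisms from $C_{0}((-c,c))$ into the quotient $C(Z_{P},H_{Z},\phi_{Z})/C(\partial Z_{P},H_{Z},\phi_{Z})$ coincide, so on the level of the $K$-theory classes above
\[
K(q)(\ind^{G}(D))=K(p)(\ind^{K}(D^{\prime}))\ .
\]

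Then I would run the identification chain from the proof of \cref{ajfiovafdsvavdsvadv}, applied to $M_{P}$ and $Z_{P}$ in place of $M$ and $Z$. The apparatus of \cref{thrertherhertheth} is available here: $Z_{P}$ is wide because wideness (\cref{9wtgiohwgregwerg}) refers only to the coarse structure and the subset $Z$, both unchanged, and $M_{P}$ remains very proper. Thus \cref{eiogwegergwegergwerg} and the diagrams in its proof show that, under the comparison maps $\kappa$, the obstruction morphism $r_{P}$ corresponds to the boundary map $\delta^{C^{*}}$ of the short exact sequence \eqref{qewfoifjoqwefqwefqewf} for $M_{P}$, and that $\delta^{C^{*}}$ factors as $\delta^{C^{*}\prime}\circ K(q)$ through $C(Z_{P},H_{Z},\phi_{Z})/C(\partial Z_{P},H_{Z},\phi_{Z})$. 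Combining this with the previous display gives
\[
r_{P}(\ind^{G}(D))=\delta^{C^{*}\prime}\big(K(q)(\ind^{G}(D))\big)=\delta^{C^{*}\prime}\big(K(p)(\ind^{K}(D^{\prime}))\big)=0\ ,
\]
the last equality because $K(p)$ and $\delta^{C^{*}\prime}$ are consecutive maps in a fibre sequence of $K$-theory spectra.

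The only genuinely new point compared with \cref{fergoijergoewgergewrgreg} is that both the index class and the affiliation of \cref{eirogjwoegwergerwf1} are available only for $\varphi$ in $C_{0}((-c,c))$ rather than all of $C_{0}(\R)$; since these two restrictions match, the argument goes through verbatim. I do not expect a real obstacle: the remaining work is the routine check that none of the diagrams of \cref{thrertherhertheth} used the original bornology of $M$ in an essential way.
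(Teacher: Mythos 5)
Your proposal is correct and follows essentially the same route as the paper: represent $\ind^{G}(D)$ and $\ind^{K}(D^{\prime})$ by the homomorphisms $C_{0}((-c,c))\ni\varphi\mapsto\varphi(D)$ resp.\ $\varphi\mapsto\varphi(D^{\prime})$, use \cref{eirogjwoegwergerwf1} to get $q(\ind^{G}(D))=p(\ind^{K}(D^{\prime}))$, and then conclude via the factorization $r_{P}\simeq\delta^{C^{*}\prime}\circ K(q)$ and the vanishing of $\delta^{C^{*}\prime}\circ K(p)$ as consecutive maps of a fibre sequence, exactly as in the proofs of \cref{fergoijergoewgergewrgreg} and \cref{ajfiovafdsvavdsvadv}. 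Your extra remarks that wideness and very properness survive the change of bornology, and that the restriction to $\varphi\in C_{0}((-c,c))$ is consistent throughout, are the right sanity checks and are left implicit in the paper.
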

\begin{proof}  
The index class $\ind^{G}(D)$ is represented by the homomorphism $$C_{0}((-c,c))\ni\varphi\mapsto \varphi(D)\in C(M_{P},H,\phi)\ .$$ Similarly,  if $D^{\prime}$ exists, then $\ind^{K}(D^{\prime})$ in $K\cX^{K}_{n}(Z_{P})$ is represented by 
 the homomorphism $$C_{0}((-c,c))\ni\varphi\mapsto \varphi(D')\in C(Z_{P},H_{Z},\phi_{Z})\ .$$
  Since
$q(\varphi(D))=p(\varphi(D'))$ by  \cref{eirogjwoegwergerwf1} we conclude that
$q(\ind^{G}(D))=p(\ind^{K}(D^{\prime}))$. {We now argue as in the proof of \cref{fergoijergoewgergewrgreg}.}
 \end{proof}

\begin{kor}\label{wepogwegregegreg}
If $r_{{P}} (\ind^{G}(D))\not=0$, then $D^{\prime}$ does not admit a  local  selfadjoint elliptic boundary condition which is compatible with the Clifford action.
\end{kor}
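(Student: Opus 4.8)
The plan is to deduce \cref{wepogwegregegreg} as a direct logical consequence of \cref{09qerug09egwgwerge}, exactly as the earlier \cref{kor} following \cref{fergoijergoewgergewrgreg} was deduced from that proposition. First I would argue by contraposition: assume that $D$ is of Dirac type, that $D'$ admits a local, selfadjoint, elliptic boundary condition at $\partial^\infty Z$ compatible with the Clifford action (and with the grading), and that $D'$ is the resulting selfadjoint extension. Then by definition $D'$ exists in the sense of \cref{rwejgoiejgoiergwregwreg}, so the hypothesis of \cref{09qerug09egwgwerge} is satisfied.

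Next I would simply invoke \cref{09qerug09egwgwerge} to conclude that $r_{P}(\ind^{G}(D)) = 0$. This contradicts the assumption $r_{P}(\ind^{G}(D)) \neq 0$. Hence no such boundary condition can exist, which is the claim.

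There is essentially no obstacle here: the statement is a formal restatement of \cref{09qerug09egwgwerge} in contrapositive form, the analogue of the corollary following \cref{fergoijergoewgergewrgreg}. The only point worth spelling out is that ``$D'$ exists'' in \cref{09qerug09egwgwerge} means precisely that a boundary condition of the required type (local, $K$-invariant, selfadjoint, elliptic, compatible with grading and Clifford action) is available, so that the hypotheses match verbatim. I would therefore keep the proof to one or two sentences, writing something like: ``If $D'$ admitted such a boundary condition, then $D'$ would exist, and \cref{09qerug09egwgwerge} would give $r_{P}(\ind^{G}(D)) = 0$, contradicting the hypothesis.''
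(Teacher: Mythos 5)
Your proposal is correct and matches the paper's intent exactly: the corollary is stated without proof precisely because it is the contrapositive of \cref{09qerug09egwgwerge}, and your one-line deduction (if such a boundary condition existed, $D'$ would exist and \cref{09qerug09egwgwerge} would force $r_{P}(\ind^{G}(D))=0$, a contradiction) is the intended argument.
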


{Since in general the map $K\cX_{*}^{K}(\partial Z_{P})\to K\cX_{*}^{K}(\partial Z)$  may have a kernel, the condition    $r_{P}(...)\not=0$  in \cref{eoirjgeggergegw}, \cref{09qerug09egwgwerge}, and \cref{wepogwegregegreg}     is   expected to be satisfied in more cases than the condition $r(...)\not=0$.  
}

%
 
  \section{Spectral projections of magnetic Laplacians}\label{ewoigjowergergerggw9}

 In this section we discuss the example of the  two-dimensional magnetic Laplacian. 

We consider  the smooth manifold $\R^{2}$  with coordinates $(x,y)$  and the standard Riemannian metric $dx^{2}+dy^{2}$. The group $\Z^{2}$ acts on $\R^{2}$ by translations $$((m,n),(x,y))\mapsto (x+m,y+n)\ .$$ In this way $\R^{2}$ becomes a $\Z^{2}$-bornological coarse space.  The embedding $\Z^{2}\to \R^{2}$ induces an equivalence of $\Z^{2}$-bornological coarse spaces $\Z^{2}_{can,min}\to \R^{2}$.  By \cite[Prop. 8.2]{coarsek} for any group $G$ we have an equivalence
$K\cX^{G}(G_{can,min})\simeq K(C^{*}_{r}(G))$, where $C^{*}_{r}(G)$ denotes the reduced group $C^{*}$-algebra of $G$. The case of $G=\Z^2$ gives the second equivalence in the following chain
 \begin{equation}\label{qwefefefewffqewfewfweqfqwf}
 K\cX^{\Z^{2}}(\R^{2})\stackrel{\simeq}{\leftarrow} K\cX^{\Z^{2}}(\Z^{2}_{can,min}) \simeq K(C_{r}^{*}(\Z^{2}))\stackrel{\text{Fourier}}{\simeq} K(C(\hat T^{2}))\stackrel{\substack{\text{Serre}\\\text{Swan}}}{\simeq} KU^{\hat T^{2}} \ ,
\end{equation}  
where  $\hat T^{2}$  denotes the dual group of $\Z^{2}$.  The decomposition $\Sigma^{\infty}_{+}\hat T^{2}\simeq S\oplus \Sigma S\oplus \Sigma S\oplus \Sigma^{2}S$ {(where $S$ is the sphere spectrum)} provides the decomposition
\begin{equation}\label{qwefefefewffqewfewfweqfqwf1}
KU^{ {\hat T^{2}}}\simeq KU\oplus \Sigma^{-1}KU\oplus \Sigma^{-1}KU\oplus \Sigma^{-2} KU\ .
\end{equation} 
 If we combine \eqref{qwefefefewffqewfewfweqfqwf} and \eqref{qwefefefewffqewfewfweqfqwf1} and apply $\pi_{0}$, then  we get an isomorphism  \begin{equation}\label{rhiorjtoherthertherthe}
(\dim,c_{1}): K\cX_{0}^{\Z^{2}}(\R^{2}) \cong  \Z\oplus \Z\ ,\end{equation}
where $\dim$ corresponds to the summand $\Z\cong \pi_{0}({KU} )$ contributed by the first summand  {in}  \eqref{qwefefefewffqewfewfweqfqwf1}, and $c_{1}$ corresponds to the summand $\Z\cong \pi_{0}(\Sigma^{-2}KU)$ contributed by the fourth summand in \eqref{qwefefefewffqewfewfweqfqwf1}. 

For a natural number $k\ge 1$ we  let $\Z^{2}$ act on the  total space of the trivial line bundle
$\R^{2}\times \C\to \R^{2}$ by  \begin{equation}\label{wergwegreglpwerg}((m,n),(x,y,z))\mapsto (x+m,y+n,e^{2\pi ik (my-nx)}z)\ .\end{equation} 
Let $W$ in $C^{\infty}(\R^{2})^{\Z^{2}}$ be a  real periodic function and set $w:=\|W\|_{\infty}$.
Then the partial differential operator 
\begin{equation}\label{wtiojeoirgewrg}D_{k}:= \Delta-4\pi i k (y\partial_{x}-x\partial_{y})+4\pi^{2}k^{2}(x^{2}+y^{2})-4\pi k + W \end{equation}
is invariant under this action, where $\Delta:=-(\partial_{x}^{2}+\partial_{y}^{2})$.

Assertion (\ref{werjweorvwerv}) in the following proposition states that ${D_{k}+w}$ is an example of an operator satisfying \cref{rwejgoiejgoiergwregwreg}.\ref{rojopwe}. The following has also been shown in \cite{dubnov}.

\begin{prop}\label{wtrhwergewgwerg}\mbox{}
\begin{enumerate}
\item \label{werjweorvwerv} The operator $D_{k}$ is of Laplace type, formally selfadjoint, and  {lower bounded by $-w$.}
\item \label{werjweorvwervergergergergerg} For every $\lambda\in {(w, {8}\pi k-w)}$ 
 the interval {$(-w {-1},\lambda)$} is a spectral interval for $D_{k}$. \item\label{iojiojoigrjwioergwergergwerg}  For every $\lambda\in {(w, {8}\pi k-w)}$ 
 we  have $[E_{D_{k}}(-w {-1},\lambda)]=(2k, {-}1)$ under the identification \eqref{rhiorjtoherthertherthe}.
\end{enumerate}
\end{prop}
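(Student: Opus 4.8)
The plan is to identify $D_k$ with the harmonic-oscillator/Landau Hamiltonian on the magnetic line bundle and then compute the $K$-theory class of the relevant spectral projection. First I would rewrite $D_k$ in magnetic-Schr\"odinger form: the first-order and quadratic terms in \eqref{wtiojeoirgewrg} combine so that $D_k - W = (i\,d + A)^*(i\,d + A) - 4\pi k$ for the connection one-form $A = 2\pi k(y\,dx - x\,dy)$ on the trivial bundle, whose curvature is the constant magnetic field $dA = -4\pi k\,dx\wedge dy$. Equivalently, $D_k - W + 4\pi k = \nabla^*\nabla$ for the invariant connection $\nabla = d + iA$ on $\R^2\times\C$ equipped with the $\Z^2$-action \eqref{wergwegreglpwerg}; one checks that $\nabla$ is equivariant for this action (this is exactly why the twisting cocycle $e^{2\pi ik(my-nx)}$ appears). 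This establishes \eqref{werjweorvwerv}: $D_k$ is of Laplace type with $R = W - 4\pi k \cdot \mathrm{id} + (\text{constant from }\nabla^*\nabla)$, and since the Landau levels of $\nabla^*\nabla$ are $4\pi k(2j+1)$, $j\ge 0$ (for field strength $4\pi k$), the bottom of the spectrum of $D_k - W$ is $4\pi k - 4\pi k = 0$, hence $D_k \ge -w$.

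For \eqref{werjweorvwervergergergergerg} I would use the explicit Landau-level spectrum. The operator $\nabla^*\nabla$ (acting on sections of the magnetic bundle over the torus, equivalently $\Z^2$-invariant sections over $\R^2$) has pure point spectrum concentrated at the Landau levels $4\pi k(2j+1)$ with finite-dimensional eigenspaces; subtracting $4\pi k$ and adding the bounded perturbation $W$ with $\|W\|_\infty = w$, the spectrum of $D_k$ is contained in $\bigcup_{j\ge 0}[8\pi k j - w, 8\pi k j + w]$. Hence for $\lambda \in (w, 8\pi k - w)$ both endpoints of $(-w-1,\lambda)$ lie in spectral gaps: $-w-1 < -w \le \inf\sigma(D_k)$ and $\lambda$ lies strictly between the $j=0$ band $[-w,w]$ and the $j=1$ band $[8\pi k - w, 8\pi k + w]$. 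This gives the spectral-interval claim, and moreover shows $E_{D_k}(-w-1,\lambda)$ is the spectral projection onto the lowest Landau level (perturbed by $W$), which as a projection in the Roe algebra is homotopic (through the norm-continuous path obtained by scaling $W$ to $0$, which keeps the gap open) to the unperturbed lowest-Landau-level projection $P_0$.

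For \eqref{iojiojoigrjwioergwergergwerg} I would compute $(\dim, c_1)([P_0])$ under the identification \eqref{rhiorjtoherthertherthe}. Via the chain \eqref{qwefefefewffqewfewfweqfqwf}, the class of a $\Z^2$-invariant finite-propagation projection on $L^2$ of the magnetic bundle corresponds, after Fourier--Serre--Swan, to the class in $KU^0(\hat T^2)$ of the vector bundle over the dual torus whose fiber over a character is the corresponding lowest-Landau-level subspace; this is the classical Landau-level bundle, and its rank is $\dim H^0$ of the degree-$k$ theta bundle on the elliptic curve, namely $\dim = 2k$ (the field strength $4\pi k$ corresponds to first Chern number $2k$ of the magnetic bundle over the torus with our normalization $dx^2+dy^2$ of unit-covolume lattice), while its first Chern number, computed by the index theorem / the explicit curvature of the Landau projection, is $c_1 = -1$. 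Hence $[E_{D_k}(-w-1,\lambda)] = (2k, -1)$. The main obstacle I expect is pinning down the two normalization constants precisely — matching the analytic field strength $4\pi k$ to the topological Chern number $2k$ of the magnetic line bundle over $\R^2/\Z^2$, and verifying that the Landau-level sub-bundle over $\hat T^2$ has $c_1 = -1$ rather than $+1$ (a sign that depends on orientation conventions for $\hat T^2$ and for the identification in \eqref{qwefefefewffqewfewfweqfqwf1}); this is precisely the bookkeeping that \cite{dubnov} carries out, so I would either reproduce that computation or cite it.
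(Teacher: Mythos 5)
Your proposal is correct and, for part (3), follows essentially the same route as the paper: identify the gap projection (after scaling $W$ away through a gap-preserving norm-continuous path) with the lowest-Landau-level projection, pass via Fourier transform to a spectral sub-bundle over the dual torus $\hat T^{2}$, and compute $(\dim,c_{1})=(2k,-1)$ by the families index theorem; the normalization and sign bookkeeping you defer to \cite{dubnov} is exactly what the paper carries out explicitly, using the Poincar\'e bundle over $T^{2}\times \hat T^{2}$ and the curvature formulas. Where you genuinely diverge is in parts (1) and (2): you invoke the explicit Landau-level spectrum $\{4\pi k(2j+1)\}_{j\ge 0}$ of the constant-field magnetic Laplacian on $L^{2}(\R^{2})$, whereas the paper introduces the twisted spinor Dirac operator $\Dirac_{L}$ and reads everything off the Weitzenb\"ock formula \eqref{joiqjweofqwefq}: the identity $D_{k}-W=\Dirac_{L}^{2,+}\ge 0$ gives the lower bound with no spectral computation, and the supersymmetry relation $\sigma(\Dirac_{L}^{2,+})\setminus\{0\}=\sigma(\Dirac_{L}^{2,-})\setminus\{0\}$ combined with $\Dirac_{L}^{2,-}=\Dirac_{L}^{2,+}+8\pi k$ yields the gap $(0,8\pi k)$ abstractly. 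Your route buys concreteness (the full spectrum, as in the paper's remark following the proposition); the paper's buys robustness (no diagonalization needed) and sets up the operator $\Dirac_{L}$ that is needed anyway for part (3) and for \cref{weoirgjowggergregwergreg9}. Two small points to tidy: the endomorphism in the Laplace-type decomposition is simply $R=W-4\pi k$ (there is no additional constant contributed by $\nabla^{L,*}\nabla^{L}$), and the theta bundle whose holomorphic sections give the rank has degree $2k$, not $k$ --- consistent with your own statement that the magnetic line bundle over $T^{2}$ has Chern number $2k$ and with Riemann--Roch giving $\dim H^{0}=2k$.
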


{
\begin{rem}
In particular, if $W = 0$, then the spectrum of $D_k$ has $\{0\}$ as isolated point (it is an eigenvalue of infinite multiplicity), while the open interval $(0, 8 \pi k)$ is a spectral gap.
With a bootstrap argument (see \cite[\S 2.3]{LudewigThiangGapless}) one can in fact show that the entire spectrum is the set $\{ 8 \pi k n \mid n =0, 1, \dots\}$, with each spectral value being an eigenvalue of infinite multiplicity.
Of course, the introduction of $W$ will change this spectrum. 
However, by bounded perturbation theory, the perturbed spectrum will still be contained in the set of all points with distance at most $w$ from the unperturbed spectrum. 
\end{rem}
}

\begin{proof}
The idea is to identify $D_{k}$ with $\Dirac_{{L}}^{2,+} +W$ for a $\Z^{2}$-invariant Dirac-type operator $\Dirac_{{L}}$ on $\R^{2}$. The assertions are then deduced from the Weizenboeck formula for $\Dirac_{{L}}$ and the index theorem.
Note that $D_{k}$ is designed such that this interpretation exists.

We trivialize the spinor bundle of $\R^{2}$
so that $$S(\R^{2})\cong \R^{2}\times \C^{2}\ .$$ We let $c(\partial_{x})$ and $c(\partial_{y})$ be the Clifford multiplications on $S^{2}(\R^{2})$, and we let \begin{equation}\label{ergiojweorgwergwfefwefwe}z:=ic(\partial_{x})c(\partial_{y} )\end{equation} be the $\Z/2\Z$-grading operator. 
Then $S^{2}(\R^{2})^{\pm}$ {denote} the $\pm 1$-eigenbundles of $z$. They are both one-dimensional.
 
 We let $\Z^{2}$ act on
$S(\R^{2})$ by $(m,n)(x,y,v)\mapsto (x+m,y+n,v)$. By taking the quotient we obtain the spinor bundle $S(T^{2})\to T^{2} $ associated to the Spin structure   in which this bundle is trivialized. 

The Dirac operator on $S^{2}(\R^{2})$ is given by \begin{equation}\label{fdvsdfvfwrgvfvfs}
\Dirac=c(\partial_{x})\partial_{x}+c(\partial_{y})\partial_{y}\ .
\end{equation} 
We will obtain {a {generalized} Dirac operator} ${\Dirac_{L}}$ by twisting $\Dirac$ with an equivariant Hermitian line bundle $L\to \R^{2}$ with connection $\nabla^{L}$.
The underlying equivariant Hermitian vector bundle is the trivial bundle $\R^{2}\times \C\to \R^{2}$  with action of $\Z^{2}$  given by \eqref{wergwegreglpwerg}. The invariant connection is given by the formula
\begin{equation}\label{ewfqpokpoqkfqefqfewfewfq}
\nabla^{L}:=d-2\pi i k (xdy-ydx)\ .
\end{equation} 
 Its curvature form is   \begin{equation}\label{ergiojweorgwerg}R^{\nabla^{L}}=-4\pi i k dx\wedge dy\ .\end{equation}
  One furthermore calculates that the connection Laplacian is given by 
\begin{equation}\label{oijgowegrw}\nabla^{L,*}\nabla^{L}=\Delta-4\pi i k (y\partial_{x}-x\partial_{y})+4\pi^{2}k^{2}(x^{2}+y^{2})\ .\end{equation}
 We now form the 
 twisted Dirac operator \begin{equation}\label{wrgwrgwrevfr}\Dirac_{L}:=c(\partial_{x})\nabla^{L}_{\partial_{x}}+c(\partial_{y})\nabla^{L}_{\partial_{y}}\ .\end{equation} The  Weizenboeck formula states
$$\Dirac_{L}^{2}=\nabla^{L,*}\nabla^{L}+ c(\partial_{x})c(\partial_{y}) R^{\nabla^{L}}(\partial_{x},\partial_{y})\ .$$
  Using the explicit calculation  \eqref{ergiojweorgwerg} of the curvature $R^{\nabla}$  and the definition of the grading \eqref{ergiojweorgwergwfefwefwe} we can rewrite the Weizenboeck term as
$$c(\partial_{x})c(\partial_{y}) R^{\nabla^{L}}(\partial_{x},\partial_{y})= {-} 4\pi k z$$
 and get \begin{equation}\label{joiqjweofqwefq}\Dirac_{L}^{2}=\nabla^{L,*}\nabla^{L}-4\pi kz\ .\end{equation}
 The operator $\Dirac_{{L}}^{2}$ commutes with $z$, and we let $\Dirac_{L}^{2,\pm}$ be the restrictions to the $\pm 1$-eigenbundles of $z$. By combining \eqref{oijgowegrw} and \eqref{joiqjweofqwefq} we see that 
under the {canonical} identification of $S^2(\R^2)^\pm$ with {trivial complex line bundles}, we have
\begin{align*}
\Dirac_{L}^{2,+} &= D_{k} -W, \\
\Dirac_{L}^{2,-} &= D_{k} -W + 8 k \pi.
\end{align*}
It is clear that $D_{k}\cong \Dirac_{L}^{2,+}+W$ is {lower bounded by $-w$}. In view of  \eqref{joiqjweofqwefq} it is of Laplace type. This shows assertion (\ref{werjweorvwerv}). 

Since $\Dirac_{L}$ anticommutes with $z$ we have a bijection
$$\sigma(\Dirac_{L}^{2,+})\setminus \{0\}=\sigma(\Dirac_{L}^{2,-})\setminus \{0\}\ .$$
Since  $\Dirac_{L}^{2,+}\ge 0$ we have 
$\Dirac_{L}^{2,-}\ge 8\pi k$. Consequently, the interval $(0,8\pi k)$ belongs to the resolvent set of $\Dirac_{L}^{2,+}$
By bounded perturbation theory, this implies that $(w,8\pi k-w)$ belongs to the resolvent set of 
$D_{k}=\Dirac_{L}^{2,+}+W$.
  Hence we conclude assertion (\ref{werjweorvwervergergergergerg}).
  
We further conclude that $0$ is an isolated eigenvalue of $\Dirac_{L}^{2,+}$. 
Using  pertubation theory we conclude that 
$$[E_{D_{k}}(-w {-1},\lambda)]=[E_{\Dirac_{L}^{2,+}}(\{0\})]$$ 
in 
{$K\cX_0^{\Z^{2}}(\R^{2})$} 
 for all $\lambda$ in $(w,8\pi k-w)$.

 In order show assertion  (\ref{iojiojoigrjwioergwergergwerg}) we must 
 calculate the $K$-theory class of the representation of $C^{*}_{r}(\Z^{2})$ on $\ker(\Dirac^{2,+}_{L})$.
Under  the Fourier transformation isomorphism $C^{*}_{r}(\Z^{2})\cong C(T^{2})$ this representation corresponds to the representation of $C(T^{2})$ on the space of sections of the kernel bundle of a family of Dirac operators parametrized by $T^{2}$ which we will describe the following.

The $\Z^{2}$-Hilbert space $L^{2}(\R^{2},S(\R^{2})\otimes L)$ can be identified with a direct integral 
$$\int_{\hat T^{2}} V_{\chi} d\chi$$ for the continuous field of Hilbert spaces $((V_{\chi})_{\chi\in \hat T},\cV)$.
Here $V_{\chi}$ is the subspace of $L^{2}_{\loc}(\R^{2},S(\R^{2})\otimes L)$ of functions on which $\Z^{2}$ acts with character
$\chi$, i.e., of sections $f$ of $S(\R^{2})$ such that
$$e^{2\pi i k(my-nx)}f(x-m,y-n)=\chi(m,n)f(x,y)$$ for all $(m,n)$ in $ \Z^{2}$,
with the scalar product  given by the $L^{2}$-scalar product of the the restriction to a fundamental domain.
The continuous structure  is given by the subspace $\cV$ of sections $g: \hat T^{2}\ni\chi\mapsto g(\chi)\in V_{\chi}$ such that the section$\int_{\hat T^{2}} g(\chi)\chi$ of $S(\R^{2}\otimes L)$ is smooth with compact support.

The operator $\Dirac_{L}$ is the given by a family of operators $(\Dirac_{L,\chi})_{\chi\in \hat  T^{2}}$, where
$\Dirac_{L,\chi}$ acts on $V_{\chi}$ by the formula \eqref{wrgwrgwrevfr}.  Since $\Dirac_{L}$ preserves smooth compactly supported sections the family 
 $(\Dirac_{L,\chi})_{\chi\in  \hat T^{2}}$ preserves the subspace $\cV$.
We must calculate the kernel bundle of this family.

We will now describe this family of operators as a family of twisted Dirac operators.
To this end we consider the Poincar\'e bundle $\bar P\to   T^{2}\times \hat T^{2}$. We let $(s,t)$ be coordinates on $\hat  T^{2}$ such that $$\chi(s,t):\Z^{2}\to U(1)\ , \quad \chi(s,t)(m,n)=e^{2\pi i (sm+tn)}\ .$$
  The bundle $  \bar P\to  T^{2}\times  \hat   T^{2}$ is obtained as quotient of the trivial bundle
$P:=\R^{2}\times \hat T^{2}\times \C\to \R^{2}\times \hat T^{2}$ by $\Z^{2}$, where $\Z^{2}$ acts on the domain by $$\psi(m,n)(x,y,s,t,z):=(x+m,y+n,s,t, e^{2\pi i (sm+tn)}z)\ .$$ 
On $P$ we consider the invariant  connection $$\nabla^{P}:=d+2\pi i (xds+ydt)\ .$$ It induces a connection $\nabla^{\bar P}$ on the quotient $\bar P$.
Its curvature is given by 
\begin{equation}\label{erjioejbioejrvowverv} R^{\nabla^{P}}=2\pi i (dx\wedge ds+dy\wedge dt)\ .\end{equation}

We let $\bar L\to T^{2}$ be the quotient of the bundle $ L\to \R^{2}$, and $\Dirac_{\bar L}$ be the resulting twisted  Dirac operator on $T^{2}$. We can consider this operator as a constant family on the bundle $T^{2}\times \hat T^{2}\to \hat T^{2}$. 
We now form the non-constant family $\Dirac_{\bar L\otimes \bar P}$ by twisting $\Dirac_{\bar L}$ further with $\bar P$.
{On the fibre over $\chi$ in $\hat T^{2}$ the operator  $\Dirac_{\bar L\otimes \bar P}$ identifies with $\Dirac_{L,\chi}$. 
 Hence the index bundle of $\Dirac_{\bar L\otimes \bar P}$ equals the kernel bundle of the family $(\Dirac_{L,\chi})_{\chi\in  \hat T^{2}}$.}


We can now apply the Atiyah-Singer index theorem for families in order to calculate the Chern character of the index bundle $\ch(\ind(\Dirac_{\bar L\otimes \bar P}))$ in $H^{*}(\hat T^{2})$ of this family. We get 
$$\ch(\ind(\Dirac_{\bar L\otimes \bar P}))=\int_{T^{2}\times \hat T^{2}/\hat T^{2}} \ch(\bar L\otimes \bar P)\ .$$
We calculate in de Rham cohomology. Using the formulas for the curvatures \eqref{ergiojweorgwerg} and  \eqref{erjioejbioejrvowverv} and the formulas $c_{1}(\nabla)=\frac{i}{2\pi} R^{\nabla}$ and $\ch(\nabla)=e^{c_{1}(\nabla)}$ for a line bundle we have
\begin{eqnarray*} 
\ch(\nabla^{L}\otimes \nabla^{P})&=&1+2k(dx\wedge dy)-(dx\wedge ds+dy\wedge dt)\\&& - (dx\wedge dy\wedge ds\wedge dt)\ .
\end{eqnarray*}
We conclude that
$$\int_{T^{2}\times \hat T^{2}/\hat T^{2}} \ch(\bar L\otimes \bar P)=2k-[ds\wedge dt]\ .$$
This implies assertion (\ref{iojiojoigrjwioergwergergwerg}).
\end{proof}

  In the remainder of this subsection we show that the magnetic Laplacian $D_{k}$ provides a non-trivial example for the theory developed in \cref{erguiqrgeqfweqef}.
We consider a codimension-zero submanifold $Z$ of $\R^{2}$
with smooth boundary $\partial Z$ such that $Z$ and $\R^{2}\setminus Z$ are flasque, e.g.,  a half space.
We then consider the morphism  $r:K\cX^{\Z^{2}}(\R^{2})\to K\cX(\partial Z)$ from \eqref{wegoijwegerwregwg} for the trivial group $K$.
 \begin{lem}\label{weoirgjowggergregwergreg9}
For $\lambda$ in $(w,8\pi k-w)$
we have $r([E_{D_{k}}(-w {-1},\lambda)])\not=0$.
\end{lem}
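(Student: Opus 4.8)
The plan is to recognise $[E_{D_{k}}(-w-1,\lambda)]$ as an equivariant coarse index class and then to compute $r$ on it via the partitioned-manifold formula of \cref{qroigjhqoegrgegwergregwergre}.

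First I would recall from the proof of \cref{wtrhwergewgwerg} that $[E_{D_{k}}(-w-1,\lambda)]=[E_{\Dirac_{L}^{2,+}}(\{0\})]$ in $K\cX_{0}^{\Z^{2}}(\R^{2})$, where $\Dirac_{L}$ is the twisted Dirac operator \eqref{wrgwrgwrevfr} and $0$ is an isolated eigenvalue of $\Dirac_{L}^{2,+}$. Since $\Dirac_{L}$ anticommutes with the grading $z$ and $\Dirac_{L}^{2,-}\ge 8\pi k>0$, the operator $\Dirac_{L}^{2,-}$ has trivial kernel; hence \eqref{tgjoiwergergergwe}, applied to the graded Dirac operator $\Dirac_{L}$ (so that $D=\Dirac_{L}^{2}$), identifies $[E_{D_{k}}(-w-1,\lambda)]=[E_{\Dirac_{L}^{2,+}}(\{0\})]-[E_{\Dirac_{L}^{2,-}}(\{0\})]=\ind^{\Z^{2}}(\Dirac_{L})$. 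It therefore suffices to show $r(\ind^{\Z^{2}}(\Dirac_{L}))\neq 0$.

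Next I would apply \cref{qroigjhqoegrgegwergregwergre} with $G=\Z^{2}$, the trivial subgroup $K=\{1\}$, $M=\R^{2}$ and $Z$ the chosen half space, which gives $r(\ind^{\Z^{2}}(\Dirac_{L}))=\ind(\partial\Dirac_{L})$ in $\Sigma K\cX(\partial Z)$, where $\partial\Dirac_{L}$ is the Dirac operator induced on the boundary line $\partial^{\infty}Z\cong\R$. Restricting the connection \eqref{ewfqpokpoqkfqefqfewfewfq} to $\{y=0\}$ shows that $L|_{\partial^{\infty}Z}$ is the trivial Hermitean line bundle with the trivial connection, so $\partial\Dirac_{L}$ is just the spin Dirac operator of $\R$ (tensored with a trivial rank-one bundle). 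For the half space every member of the big family $\partial Z$ is a bounded thickening of $\partial^{\infty}Z$, hence coarsely equivalent to $\R$, so the inclusion induces an isomorphism $K\cX_{*}(\partial^{\infty}Z)\cong K\cX_{*}(\partial Z)$, and these groups are $\cong\Z$ (up to Bott periodicity).

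The remaining point, and the only non-formal one, is that the coarse index of the Dirac operator of $\R$ is nonzero, i.e.\ a generator of $\Z$. This is the one-dimensional case of Roe's coarse index theorem; alternatively it follows by applying \cref{qroigjhqoegrgegwergregwergre} once more to $M=\R$ with the decomposition $\R=(-\infty,0]\cup[0,\infty)$ --- where the relevant obstruction map is an equivalence by \cref{gijergoiewrgwergwegwreg}, since both pieces are flasque --- which carries $\ind(\partial\Dirac_{L})$ to the index class over the point, namely the rank of the twisting bundle, which is $1$. Hence $r([E_{D_{k}}(-w-1,\lambda)])=\ind(\partial\Dirac_{L})\neq 0$. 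I note that the argument could instead be organised through \cref{gijergoiewrgwergwegwreg}, which reidentifies $r$ with the transfer $c^{\Z^{2}}_{\{1\}}$, combined with the explicit value $[E_{D_{k}}(-w-1,\lambda)]=(2k,-1)$ from \cref{wtrhwergewgwerg}; but identifying that transfer on this class again reduces to \cref{qroigjhqoegrgegwergregwergre}.
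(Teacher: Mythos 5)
Your proposal is correct in substance, and its first step --- identifying $[E_{D_{k}}(-w-1,\lambda)]$ with $\ind^{\Z^{2}}(\Dirac_{L})$ via the invertibility of $\Dirac_{L}^{2,-}$ --- coincides with the paper's. But the decisive non-vanishing step is handled by a genuinely different route. The paper uses the flasqueness of $Z$ and $\R^{2}\setminus Z$ to identify $r$ with the transfer $c^{\Z^{2}}$, reduces to showing that the non-equivariant coarse index $\ind(\Dirac_{L})\in K\cX_{2}(\R^{2})$ is nonzero, and for this proves $\ind(\Dirac_{L})=\ind(\Dirac)$ by a symbol-class/relative-index deformation; the paper is explicit that this is the delicate point, since $\Dirac_{L}-\Dirac$ is an unbounded zero-order perturbation and naive interpolation of indices is not justified. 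You instead invoke the partitioned-manifold formula of \cref{qroigjhqoegrgegwergregwergre} to push $\ind^{\Z^{2}}(\Dirac_{L})$ to the boundary line, where the magnetic connection \eqref{ewfqpokpoqkfqefqfewfewfq} restricts to the trivial one, so the unbounded-twist issue disappears and everything reduces to the standard fact that the coarse index of the Dirac operator on $\R$ generates $K\cX_{1}(\R)\cong\Z$ (your reduction of that fact to the point again uses flasqueness of the two half-lines, as in \cref{gijergoiewrgwergwegwreg}, to know the relevant boundary map is injective). This is an attractive shortcut; its cost is that it leans on \cref{qroigjhqoegrgegwergregwergre}, whose proof the paper itself only sketches, whereas the paper's argument for this lemma is independent of that result.

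One point needs attention: the lemma is stated for an arbitrary codimension-zero $Z$ with $Z$ and $\R^{2}\setminus Z$ flasque --- the half space is only the paper's ``e.g.'' --- while your boundary computation uses the specific boundary $\{y=0\}$. To cover the general case you should either observe that the restriction of $L$ to any one-dimensional boundary is still a flat, trivializable line bundle, so the boundary operator still has generating index; or, more cheaply, first record (as the paper does) that flasqueness makes the Mayer--Vietoris boundary an equivalence, so that $r$ is nonzero on a class if and only if the $Z$-independent transfer $c^{\Z^{2}}$ is, and then verify non-vanishing for a single half space by your argument.
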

\begin{proof}
By our flasqueness assumption on $Z$ and $ \R^{2}\setminus Z$ the Mayer-Vietoris boundary $\delta$ in \eqref{FibreSequenceZXZ}
is an equivalence. It follows that $r$ is equivalent to the transfer map 
\begin{equation}\label{werfwrefrwewferferfwrfref}
c^{\Z^{2}}:K\cX^{\Z^{2}}(\R^{2})\to K\cX(\Res^{\Z^{2}}(\R^{2}))\ .
\end{equation} 
It thus suffices to show that the image of the class 
$[E_{D_{k}}(-w {-1},\lambda)]$ in $ K\cX_{0}^{\Z^{2}}(\R^{2})$ under  \eqref{werfwrefrwewferferfwrfref}  is non-trivial.  

In principle this immediately follows  from \cref{wtrhwergewgwerg}.\ref{iojiojoigrjwioergwergergwerg} and the isomorphism 
\eqref{rhiorjtoherthertherthe} if one could identify the transfer morphism with the component $c_{1}$ up to sign.
This fact  is actually not completely obvious but could be deduced a posteriori from our calculations.
We prefer to give an argument which is independent of the calculation  in \cref{wtrhwergewgwerg}.\ref{iojiojoigrjwioergwergergwerg}
and is also applicable in other situations.

 The first equality in  the chain
$$[E_{D_{k}}(-w {-1},\lambda)]=[E_{\Dirac_{L}^{2,+}}(\{0\})]\stackrel{!}{=}\ind^{\Z^{2}}(\Dirac_{L})$$
has been shown  in the proof   of \cref{wtrhwergewgwerg}.  
The symbol $\ind^{\Z^{2}}(\Dirac_{L}) $ denotes  the equivariant coarse index  in {$K\cX^{\Z^{2}}_0(\R^{2})$} of the Dirac operator
$\Dirac_{L}$ on $\R^{2}$ as in \cite[Def. 9.5]{indexclass} (with full support).
  For the equality marked by $!$ note that,  
{as seen in the proof of \eqref{wtrhwergewgwerg}, the operator}
 $\Dirac_{L}^{2,-}$ is invertible and $0$  is an isolated point of the spectrum of  $\Dirac_{L}$.
We also use Bott periodicity in order to identify the index of the Dirac operator which is a $K$-theory class in degree $2$ with the degree-$0$ class represented by the spectral projection.

The transfer map
\eqref{werfwrefrwewferferfwrfref} sends  $\ind^{\Z^{2}}  (\Dirac_{L}) $ to the index
$\ind  ( \Dirac_{L}) $ in $K\cX(\Res^{\Z^{2}}(\R^{2}))$ of the same operator $\Dirac_{L}$ without equivariance, see \cite[Rem. 10.6]{coarsek}.  
In order to show that $ \ind  ( \Dirac_{L} )\not=0$ we 
  argue that \begin{equation}\label{ewfqwfpokpqwfewfqwf}
\ind  (\Dirac_{L})= \ind  (\Dirac)
\end{equation}
with $\Dirac$ as in \eqref{fdvsdfvfwrgvfvfs}
  and then  use the  fact $ \ind   (\Dirac )$ is a generator of $ K\cX_{2}(\R^{2})\cong \Z$.

The idea for showing the equality  \eqref{ewfqwfpokpqwfewfqwf} is   that $\ind (\Dirac_{L}) $  is  the image of the symbol of $ \Dirac_{L} $ under the coarse assembly map, and that the symbol classes of $ \Dirac_{L} $ and $  \Dirac $ coincide as these operators are lower order perturbations of each other. 
Since $\R^{2}$ is non-compact some care is needed in the details.  
The problem is that $\Dirac_{L}-\Dirac$ is zero order, but unbounded.  Therefore it is not clear that the linear interpolation between them preserves the index.
We will give a completely formal argument using the coarse interpretation of index theory developed in  \cite[Def. 4.14]{Bunke:2018ty}.
Alternatively one could argue as in \cite[Lem. 3.2]{LudewigThiangGapless}.

   
       In the following we omit the symbol $\Res^{\Z^{2}}$.  We will use  the local homology theory $K\cX\cO^{\infty}:\UBC\to \Sp$ from \cite[Lem. 9.6]{ass},
  where $\UBC$ is the category of 
   uniform bornological coarse spaces. This theory is well-defined since $K\cX$ is strong by \cite[Prop. 6.4]{coarsek}.
   The cone boundary
   $$\partial :\Sigma^{-1}K\cX\cO^{\infty} \to  K\cX  $$ (see \cite[(9.1)]{ass})
   is a natural transformation of local homology theories, where we consider  $\Sigma^{-1}K\cX$ as a local homology 
   via the forget functor $\UBC\to \BC$.
     According to \cite[Def. 4.14]{Bunke:2018ty} 
 any generalized Dirac operator    $\Dirac'$ on $\R^{2}$ gives rise to a symbol class $\sigma(\Dirac')$ in $\pi_{1}(K\cX\cO^{\infty}(\R^{2}))$ such that
 $\ind  (\Dirac')=\partial(\sigma(\Dirac'))$.
 By   \cite[Prop. 11.23]{ass} the functor $K\cX\cO^{\infty}$ behaves  on finite-dimensional manifolds like a locally finite homology theory.  In particular, if  $B$ is the unit ball in $\R^{2}$, then the canonical map 
 \begin{equation}\label{adsvasvvsvdvdavdsvdsavavasdvasvasvsdv}
e:K\cX\cO^{\infty}(\R^{2}) \to K\cX\cO^{\infty}(\R^{2},\R^{2}\setminus B)\simeq \Cofib (K\cX\cO^{\infty}(\R^{2}\setminus B) \to  K\cX\cO^{\infty}(\R^{2}))
\end{equation}  is an equivalence.

 Note that $\sigma(\Dirac_{L})$ is given by the index of a Dirac  operator  on a geometric version of the cone
which is locally derived from $\Dirac_{L}$.   
 By the coarse relative index theorem  \cite[Thm. 10.4]{indexclass} 
 the class $e(\sigma(\Dirac_{L}))$  only depends on the restriction
 of $\Dirac_{L}$ to $B$. Let $\psi$ be in $C_{c}(\R^{2})$ such that $\psi_{|B}=1$ and $\psi_{|\R^{2}\setminus 2B}=0$.
Then we consider the family of connections (compare with \eqref{ewfqpokpoqkfqefqfewfewfq})
$$\nabla^{L}_{t}:=d- (1-t\psi (x,y)) 2\pi i k (xdy-ydx) $$ and the associated Dirac operator $\Dirac_{L_{t}}$.
We have $\Dirac_{L_{0}}=\Dirac_{L}$ and $(\Dirac_{L_{1}})_{|B}=\Dirac_{|B}$. 
 Since the family $D_{L_{t}}$ is constant outside of  the compact subset $2B$ this perturbation does not alter the 
coarse index, i.e. we have
$\ind  (\Dirac_{L_{t}})=\ind   (\Dirac_{L})$ for all $t$ in $[0,1]$.
On the other hand, $e(\sigma(\Dirac_{L_{1}}))=e(\sigma(\Dirac))$ and hence
$ \sigma(\Dirac_{L_{1}})=\sigma(\Dirac)$ by the injectivity of $e$ in  \eqref{adsvasvvsvdvdavdsvdsavavasdvasvasvsdv}.
This implies the equality \eqref{ewfqwfpokpqwfewfqwf} in view of the chain   
\begin{equation*}
 \ind  (\Dirac_{L})
 =    \ind (\Dirac_{L_{1}}) 
 =\partial  \sigma(\Dirac_{L_{1}})
 =\partial  \sigma(\Dirac)
 = \ind  (\Dirac )\ . \qedhere
 \end{equation*}
   \end{proof}

 Combining \cref{wtrhwergewgwerg}, \cref{weoirgjowggergregwergreg9} and  \cref{ewgiowgregwergwrg} we can now conclude:
 \begin{kor}\label{qeroighjwoergwregrweg9}
 If $D_{k}^{\prime}$ is   the Dirichlet extension (or any other selfadjoint extension determined by a local boundary condition) of the restriction of $D_{k}$ to $Z$, then  $[w,8\pi k-w]\subseteq \sigma(D_{k}^{\prime}) $.
 \end{kor}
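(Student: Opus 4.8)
The plan is to combine the three preceding results: \cref{wtrhwergewgwerg} identifies $D_{k}+w$ as a non-negative operator of Laplace type and computes the relevant $K$-homology class; \cref{weoirgjowggergregwergreg9} shows that the obstruction morphism $r$ does not annihilate this class; and \cref{ewgiowgregwergwrg} then turns this non-vanishing into a spectral statement. The only bookkeeping is a translation of $D_{k}$ by $w$ — the results of \cref{erguiqrgeqfweqef} are phrased for non-negative Laplace type operators, whereas $D_{k}$ is merely bounded below by $-w$ — together with a passage from an open interval to its closure at the very end.

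Concretely, I would fix $\lambda\in(w,8\pi k-w)$ and aim to prove $\lambda\in\sigma(D_{k}')$; letting $\lambda$ vary and using that $\sigma(D_{k}')$ is closed then gives $[w,8\pi k-w]\subseteq\sigma(D_{k}')$. First I would pass to $D:=D_{k}+w$ and to $D':=D_{k}'+w$, the selfadjoint extension of the restriction of $D$ to $Z$ defined by the same local boundary condition as $D_{k}'$. By \cref{wtrhwergewgwerg}.\ref{werjweorvwerv} the operator $D_{k}$ is formally selfadjoint of Laplace type and bounded below by $-w$, so $D$ is non-negative of Laplace type; for the Dirichlet condition $D'$ is again non-negative, the Dirichlet extension of $\Dirac_{L}^{2,+}$ being non-negative and $W\geq-w$. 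Hence $(D,D')$ is an admissible pair for the results of \cref{erguiqrgeqfweqef} with $G=\Z^{2}$, $K=\{1\}$, $M=\R^{2}$ and $Z\subseteq\R^{2}$ the flasque codimension-zero submanifold fixed above. Since translation commutes with the functional calculus, $\sigma(D)=\sigma(D_{k})+w$, $\sigma(D')=\sigma(D_{k}')+w$, and $E_{D_{k}}\bigl((-w-1,\lambda)\bigr)=E_{D}\bigl((-1,\lambda+w)\bigr)$ as projections, so they represent the same class in $K\cX^{\Z^{2}}(\R^{2})$.

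Then I would run the chain of implications. By \cref{wtrhwergewgwerg}.\ref{werjweorvwervergergergergerg} the interval $(-w-1,\lambda)$ is a spectral interval of $D_{k}$, so $-w-1\notin\sigma(D_{k})$ and $\lambda\notin\sigma(D_{k})$, equivalently $-1\notin\sigma(D)$ and $\mu:=\lambda+w\notin\sigma(D)$. By \cref{weoirgjowggergregwergreg9} we have $r\bigl([E_{D_{k}}(-w-1,\lambda)]\bigr)\neq 0$, which by the identification of classes above reads $r\bigl([E_{D}(-1,\mu)]\bigr)\neq 0$. Applying \cref{ewgiowgregwergwrg}.\ref{feuivhiqvcq1} to the non-negative Laplace type operator $D$ and the parameter $\mu\in\R\setminus\sigma(D)$ then yields $\mu\in\sigma(D')$, i.e.\ $\lambda+w\in\sigma(D_{k}')+w$, hence $\lambda\in\sigma(D_{k}')$. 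Since $\lambda\in(w,8\pi k-w)$ was arbitrary, $(w,8\pi k-w)\subseteq\sigma(D_{k}')$, and closedness of the spectrum upgrades this to $[w,8\pi k-w]\subseteq\sigma(D_{k}')$.

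I do not expect a genuine obstacle: all the substance lies in the three cited statements. The only points that require care are the shift $D_{k}\mapsto D_{k}+w$ — one must check the chosen selfadjoint extension stays non-negative after it, which holds for the Dirichlet extension and, more generally, for any local boundary condition of the type admitted in \cref{erguiqrgeqfweqef} — and the final step from the open interval $(w,8\pi k-w)$ to its closure, which is precisely where the two endpoints $w$ and $8\pi k-w$ are pushed into $\sigma(D_{k}')$.
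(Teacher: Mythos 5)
Your proposal is correct and follows exactly the route the paper intends: the paper's entire proof is the sentence ``Combining \cref{wtrhwergewgwerg}, \cref{weoirgjowggergregwergreg9} and \cref{ewgiowgregwergwrg} we can now conclude,'' and you have supplied precisely that combination, together with the two pieces of bookkeeping the paper leaves implicit (the shift $D_{k}\mapsto D_{k}+w$ needed to meet the non-negativity hypothesis of \cref{rwejgoiejgoiergwregwreg}.\ref{rojopwe}, and the passage from the open interval to its closure via closedness of the spectrum). The only caveat worth recording is that the closure step requires $w<4\pi k$ so that $(w,8\pi k-w)$ is non-empty, an implicit assumption the paper also makes.
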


Examples for $Z$ are
half spaces or spaces of the form 
$\{(x,y)\:|\: y\le f(x)\}$ for  some smooth function $f:{\R}\to \R$.
Furthermore, we can allow any bounded  perturbation of a space of this form, e.g.
  the   half space $\{(x,y)\:|\: y\le 0\}$ together with the union of $1/3$-balls at all points
$(n,k)$ for  all 
$n$ in $\nat$ and fixed $k$ in $\nat$.

 \begin{ex}
 We provide another example of a $K$-theory class which can be detected using the restriction morphism. The unitary $u$ in $C^{*}_{r}(\Z^{2})$ given by the element $(1,0)$ of $\Z^{2}$ 
  represents a class
 $[u]\in K_{1}(C_{r}^{*}(\Z^{2}))\simeq K\cX_{1}^{\Z^{2}}(\Z^{2}_{can,min})$.
We consider the group $  \Z$ as a subgroup of $\Z^{2}$ embedded  by $n\mapsto (0,n)$ and the $\Z$-invariant subspace $Z=\{(n,m)\mid n\ge 0\}$ of $\Z^{2}_{can,min}$.
The inclusion $\Z_{can,min}\to \Res^{\Z^{2}}_{\Z}(\Z^{2}_{can,min})$
induces an equivalence
$K\cX^{\Z}(\Z_{can,min})\simeq K\cX^{\Z}(\partial Z)$. 
The morphism  $r$ from \cref{wegoijwegerwregwg} can therefore be interpreted as a morphsim 
$$r: K(C_{r}^{*}(\Z^{2}))\simeq K\cX^{\Z^{2}}(\Z^{2}_{can,min})\to  \Sigma K\cX^{\Z}(\partial Z) \simeq  \Sigma K\cX^{\Z}(\Z_{can,min})  \ .$$ 
One can now check by an explicit calculation that
$r([u])$ is a generator of the cyclic group  $ K\cX_{0}^{\Z}(\Z_{can,min})$.
 
 This idea can be expanded in order to obtain a 
coarse-geometric proof of the fact  shown in  \cite{elliott-natsume} that for a general
group $G$ the canonical map $G_{\ab}\to K_{1}(C_{r}^{*}(G))$ is injective.
 \end{ex}

\bibliographystyle{alpha}
\bibliography{unik}

\end{document}